\documentclass[reqno]{amsart}
\usepackage{amssymb}
\usepackage{graphicx}

\usepackage[usenames, dvipsnames]{color}
\usepackage{verbatim}
\usepackage{mathrsfs}

\numberwithin{equation}{section}

\newtheorem{theorem}{Theorem}[section]

\newtheorem{lemma}[theorem]{Lemma}
\newtheorem{prop}[theorem]{Proposition}

\theoremstyle{definition}
\newtheorem{remark}[theorem]{Remark}

\theoremstyle{definition}

\theoremstyle{definition}

\makeatletter
\def\dashint{\operatorname%
{\,\,\text{\bf-}\kern-.98em\DOTSI\intop\ilimits@\!\!}}
\makeatother

\def\\det{\text{det}}

\def\.5{\frac{1}{2}}

\newcommand{\RN}[1]{%
  \textup{\uppercase\expandafter{\romannumeral#1}}%
}

\renewcommand{\epsilon}{\varepsilon}

\newcounter{marnote}

\begin{document}
\title[A Liouville theorem for periodic solution]{A Liouville theorem for ancient solutions of the parabolic Monge-Ampère equation with periodic data}
\author[K. Yan]{Kui Yan}
\address[K. Yan]{School of Mathematical Sciences, Beijing Normal University, Laboratory of Mathematics and Complex Systems, Ministry of Education, Beijing 100875, China.}
\email{202231130028@mail.bnu.edu.cn}

\author[J. Bao]{Jiguang Bao\textsuperscript{*}}
\address[J. Bao]{School of Mathematical Sciences, Beijing Normal University, Laboratory of Mathematics and Complex Systems, Ministry of Education, Beijing 100875, China.}
\email{jgbao@bnu.edu.cn}

\footnotetext{*corresponding author}

\footnotetext{2020 Mathematics Subject Classification. Primary 35K96; Secondary 35B10, 35B40.}

\date{\today} 

\begin{abstract}
This article is concerned with the parabolic Monge-Ampère equation $-u_t\det D_x^2u=f$, where $f=f_1(x)f_2(t)$ and $f_1,f_2$ are positive periodic functions. We prove that any classical parabolically convex ancient solution $u$ must be of the form $-\tau t+p(x)+v(x,t)$, where $\tau$ is a positive constant, $p(x)$ is a convex quadratic polynomial, and $v$ inherits both the spatial and temporal periodicity from $f$. This work extends previous contributions by Caffarelli-Li \cite{cl04} on periodic frameworks for the elliptic Monge-Ampère equations, and generalizes Zhang-Bao \cite{zb18}'s Liouville theorem for $f_2\equiv1$ in parabolic case.

\noindent{\textbf{Keywords: }parabolic Monge-Ampère equation, Liouville theorem, periodic data}
\end{abstract}
\maketitle

\section{Introduction and main results} \label{1}
The aim of this paper is the classification of all $C^{2,1}$ parabolically convex ancient solutions of the parabolic Monge-Ampère equation
\begin{equation}\label{pma1}
-u_t\det D_x^2u=f(x,t)\quad \text{in} \quad\mathbb{R}^{n+1}_-.
\end{equation}
Here $\mathbb{R}^{n+1}_-:=\mathbb{R}^n\times\left(-\infty,0\right]$, the notation $u_t$ designates the temporal derivative, $D_x^2u$ corresponds to the Hessian acting on the spatial variables and $f(x,t)$ is a known periodic function. $u(x,t)$ is called parabolically convex if it is convex in $x$ for each fixed $t$ and nonincreasing in $t$ for each fixed $x$. We denote $D_x^i D_t^j u$ as $u$'s mixed derivative, which is $i\text{-th}$ order in $x$ and $j\text{-th}$ order in $t$. We say that $u$ is $C^{2 k, k}(k\in\mathbb{N}^+)$ if $D_x^i D_t^j u$ is continonus for indices satisfying $i,j\in\mathbb{N}$ and $i+2 j \leq 2 k$. 

The equation \eqref{pma1} was first introduced by Krylov \cite{k76}. This type of equation arises in stochastic control theory \cite{k81a,k81b}, surface evolution‌ governed by Gauss-Kronecker curvature \cite{f74,t85} and maximum principle for parabolic equations \cite{k81}. 

Under the time-derivative constraint
\begin{equation}\label{u_t}
m_1\le -u_t\le m_2\quad \text{in}\quad \mathbb{R}^{n+1}_-
\end{equation}
for constants $m_1,m_2>0$, Gutiérrez-Huang \cite{gh98} proved that any $C^{4,2}$ parabolically convex ancient solution to \eqref{pma1} with $f\equiv1$ must be of form $-\tau t+p(x)$, where $\tau>0$ and $p(x)$ is a convex quadratic polynomial. Xiong-Bao \cite{xb11} extended Guti\'errez-Huang's results to more general equations. Zhang-Bao-Wang \cite{zbw18} established that when the support of $f-1$ is bounded and $n\ge3$, $C^{2,1}$ parabolically convex ancient solution tends to $-\tau t+p(x)$ for certain decay rate at infinity. Wang-Bao \cite{wb15} obtained a similar result for another kind of parabolic Monge-Ampère equation and $n\ge4$. 

Zhang-Bao \cite{zb18} extended Gutiérrez-Huang's result to the case where $f=f(x)$ is a positive, periodic and Hölder continuous function. Their analysis revealed that any $C^{2,1}$ parabolically convex ancient solution must admit the form $-\tau t+p(x)+v(x)$, where $v(x)$ is a function sharing the same periodicity as $f(x)$.

In contrast‌ to previous studies, this paper addresses a broader framework where the right-hand side of the equation \eqref{pma1} assumes a separable structure $f(x,t)=f_1(x)f_2(t)$. More precisely, for $\alpha\in(0,1)$, we impose the following hypothesis: $f_1\in C^\alpha\left(\mathbb{R}^n\right)$, $f_2\in C^{\frac{\alpha}{2}}\left(-\infty,0\right]$ are positive functions satisfying that
\begin{equation}\label{f1f2}
f_1(x+a_ie_i)=f_1(x) \quad \text{and} \quad f_2(t-a_0)=f_2(t) \quad (x,t)\in\mathbb{R}^{n+1}_-,
\end{equation}
where $a_0,a_1,\cdots, a_n>0$ are constants and $e_1:=(1,0,\cdots,0)$, $\cdots$, $e_n:=(0,\cdots,0,1)$ are standard basis vectors in $\mathbb{R}^n$.

Our main result is as follows.

\begin{theorem}\label{mainthm1}
Let $n\ge1$ and $u\in C^{2,1}\left(\mathbb{R}^{n+1}_-\right)$ be a parabolically convex ancient solution to \eqref{pma1} with \eqref{u_t}, where $f:=f_1f_2$ and $f_1,f_2$ satisfy \eqref{f1f2}. Then there exist a constant $\tau>0$, an $n\times n$ symmetric positive definite matrix $A$ with 
\[
\tau\det A=-\mkern-19mu\int_{\left(\prod\limits_{1\le i\le n}[0,a_i]\right)\times[-a_0,0]}f(x,t)dxdt,
\]
and a vector $b\in\mathbb{R}^n$, such that 
\[
v(x,t):=u(x,t)-\left(-\tau t+\frac{1}{2}x'Ax+b\cdot x\right)
\] 
is $a_i$-periodic in $x_i$ variable for each $1\le i\le n$ and $a_0$-periodic in $t$ variable.
\end{theorem}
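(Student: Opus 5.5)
The plan is to follow the Caffarelli--Li strategy for periodic data, adapted to the parabolic setting as in Zhang--Bao. The proof has three stages: build a periodic corrector, reduce to a solution whose data are periodic in space and time, and then classify differences of translates.

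\textbf{Step 1: periodic corrector.} First I separate the time dependence. Set
\[
\bar f_2:=\frac{1}{a_0}\int_{-a_0}^0 f_2\,dt,\qquad F(t):=\int_0^t\bigl(f_2(s)-\bar f_2\bigr)\,ds .
\]
Then $F$ is $a_0$-periodic and bounded. The equation $-u_t\det D_x^2u=f_1f_2$ is not directly linearizable by a time change. Instead I look for a special solution of the form
\[
w(x,t)=-\tau t+\tfrac12 x'Ax+b\cdot x+\phi(x)+\psi(x,t).
\]
By the Caffarelli--Li theorem there is a periodic $\phi$ with $\det(A+D^2\phi)=c f_1$. Taking $\psi=\psi(t)$ gives $-(\tau-\psi')\,c f_1=f_1f_2$, that is $\psi'=\tau-f_2/c$. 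So $\psi$ is periodic exactly when $\tau c=\bar f_2$, and then $\psi=-F/c$ up to a constant. This produces a model solution with periodic perturbation, and $\tau\det A=\bar f_2\,\dashint f_1$ is the averaged identity claimed in the theorem.

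\textbf{Step 2: growth, compactness and uniqueness.} Using \eqref{u_t}, I integrate in $t$ to get $u(x,t)=u(x,0)+O(|t|)$ with a linear-in-$t$ two-sided bound. For each fixed $t$, $\det D_x^2u$ is pinched between positive constants, so by the Caffarelli--Li Liouville result (periodic right-hand side not needed here, only bounded), each time slice is quadratic up to $O(|x|^{2-\epsilon})$. Next I compare $u$ with the translates
\[
u(x+a_ie_i,t),\qquad u(x,t-a_0),
\]
which solve the same equation because $f$ is periodic. Their differences satisfy a linear uniformly parabolic equation, after showing $D_x^2u$ is bounded: this uses the rescaling $u_R(x,t)=R^{-2}u(Rx,R^2t)$, the Gutiérrez--Huang interior estimates and a blow-down to $-\tau t+\frac12x'Ax$. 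The differences have sublinear growth, and I aim to show they are constants via a Liouville theorem for ancient solutions of linear parabolic equations. It then follows that $u(x+a_ie_i,t)-u(x,t)=\ell_i$ and $u(x,t-a_0)-u(x,t)=\ell_0$. From these, subtracting a linear function in $x$ and $-\tau t$ with $\tau=\ell_0/a_0$ leaves a periodic remainder. The matrix $A$ is the second-order part obtained from the blow-down, namely $D_x$ of the linear increments. Integrating the equation over a period cell and using periodicity of $v$ then yields the formula for $\tau\det A$, as in Step 1.

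\textbf{Main obstacle.} The hard step is proving that the differences of translates are constant. They satisfy the linear equation
\[
-a(x,t)\,\delta_t+\operatorname{tr}\bigl(B(x,t)D_x^2\delta\bigr)=0
\]
with merely measurable coefficients. Here $a$ comes from $\int -u_t\,\det D_x^2u\,(D_x^2u)^{-1}$ along the segment and is bounded only once $D_x^2u$ is bounded above and below. I would first establish a uniform $C^{1,1}$ bound in $x$ by interior estimates on slabs, using the quadratic-type growth and the scale invariance of the equation under periodic data. Then I apply Krylov--Safonov Harnack to the bounded, oscillation-controlled differences. The key point is that $\delta$ is bounded: it is the difference of two slices that are quadratic plus a periodic-type error. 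Bounded ancient solutions of such equations are constant, and this gives the result.
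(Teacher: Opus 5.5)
Your Step 1 builds exactly the model solution the paper uses, namely $Q+\xi_1+\xi_2$. However, your Step 2 never compares $u$ with it, and the translate argument you use instead breaks down for the spatial translates.

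\textbf{The spatial translates.} The function $\delta_i(x,t)=u(x+a_ie_i,t)-u(x,t)$ is not bounded. If the theorem holds, then $\delta_i=a_ie_i'Ax+\mathrm{const}$, so it grows linearly, because translates of a quadratic differ by an affine function. So your sentence that $\delta$ is bounded, being the difference of two slices that are quadratic plus a periodic-type error, is false. Concluding that the $\delta_i$ are constant is also impossible: it would make $A=0$.

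Subtracting the expected affine part does not rescue the argument. Proposition~\ref{prop0} combined with the Hessian bound (by interpolation) only gives a remainder of size $O\bigl((|x|^2-t)^{\frac12-\frac{\epsilon}{8}}\bigr)$. This is sublinear, but far above the small Krylov--Safonov exponent. A Liouville theorem for ancient solutions of $-a\,\delta_t+\operatorname{tr}(BD_x^2\delta)=0$ with merely measurable coefficients and such growth is not available; it would need $C^{1,\alpha}$-type control that measurable coefficients do not provide.

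The time translate is different. There $u(x,t-a_0)-u(x,t)\in[m_1a_0,m_2a_0]$ really is bounded, and once $D_x^2u$ is pinched your Krylov--Safonov argument does show it is constant. That is a legitimate shortcut for Proposition~\ref{deltat}.

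\textbf{How the paper handles the spatial problem.} This is the heart of the matter, and the paper follows Caffarelli--Li. Since $\Delta_e^2u$ is only a subsolution (Lemma~\ref{concave}), one first proves $\sup\Delta_e^2u=e'Ae/|e|^2$ by blow-down and the density Lemma~\ref{density} (Proposition~\ref{deltax}). Then $h=u-Q-b\cdot x-\xi_1-\xi_2$ solves a uniformly parabolic linear equation and satisfies $\Delta_e^2h\le0$ and $h(je_k,0)\le0$. A doubling iteration with a blow-down to a concave limit gives $\sup h<\infty$ (Proposition~\ref{h}). Harnack and time periodicity then force $h$ to be constant.

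\textbf{The inputs of Step 2.} These are also not available as stated.

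First, each time slice solves $\det D_x^2u=f/(-u_t)$, whose right-hand side is merely pinched, not periodic. For merely pinched data, entire convex solutions need not be quadratic up to $O(|x|^{2-\epsilon})$. For example, a mollification of the degree-two homogeneous function $r^2(1+\eta\cos4\theta)$ in the plane, with $\eta>0$ small, is convex with $\det D^2$ pinched between positive constants, yet its blow-downs are not quadratic. Quadratic asymptotics are exactly where the space-time periodicity of $f$ is used, through the homogenization estimate (Lemma~\ref{homogenization}) and the iteration of normalized sections (Proposition~\ref{prop0}).

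Second, the global bound $C^{-1}I\le D_x^2u\le CI$ is needed for uniform parabolicity in every linear equation you write. It does not follow from interior estimates plus ``scale invariance'', since periodic data are not scale invariant. The paper obtains it from three ingredients:
\begin{itemize}
\item the lattice bound $\sup_{e\in E}\Delta_e^2u<\infty$ (Lemma~\ref{bdd2}), which itself relies on the control of $\|a_H\|$ coming from the asymptotics;
\item Lemma~\ref{cl03}, which gives a lower bound for $u$ minus its tangent plane on a fixed sphere;
\item interior $C^{2+\alpha}$ estimates (Proposition~\ref{bdd}).
\end{itemize}
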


\begin{remark}
The results of Gutiérrez-Huang \cite{gh98} and Zhang-Bao \cite{zb18} can be directly derived as special cases of Theorem \ref{mainthm1}. Specifically, our result generalizes Zhang-Bao's work by extending the case of $f_2\equiv1$ to any positive periodic Hölder continous function $f_2$ defined on $\left(-\infty,0\right]$.
\end{remark}

\begin{remark}
Theorem \ref{mainthm1} remains valid even when the orthonormal basis vectors $e_1,\cdots,e_n$ are replaced by arbitrary linearly independent vectors $\epsilon_1,\cdots,\epsilon_n$. Specifically, if $f_1,f_2$ satisfy the generalized periodicity conditions 
\[
f_1(x+\epsilon_i)=f_1(x)(1\le i\le n) \quad \text{and}\quad f_2(t-a_0)=f_2(t)\quad \text{for} \quad (x,t)\in \mathbb{R}^{n+1}_-,
\] 
then the function $v$ inherits the same periodicity:
\[
v(x+k\epsilon_i,t-la_0)=v(x,t)\quad\text{for}\quad (x,t)\in \mathbb{R}^{n+1}_-,\quad k,l\in\mathbb{N},\quad 1\le i\le n.
\]
\end{remark}

\begin{remark}
Without loss of generality, we may assume that $a_i=1$ for all $1\le i\le n$ and $a_0=1$. Moreover, we can normalize the functions such that
\[
-\mkern-19mu\int_{[0,1]^n\times[-1,0]}f(x,t)dxdt=-\mkern-19mu\int_{[0,1]^n}f_1(x)dx=-\mkern-19mu\int_{[-1,0]}f_2(t)dt=1.
\]
Meanwhile, we may also assume that the function $f$ satisfies the uniform bounds
\[
\lambda\le f\le \Lambda,
\]
where $\lambda,\Lambda$ are positive constants.
\end{remark}

The elliptic Monge-Ampère equation plays a central role in several problems, and has been applied to various fields in analysis and geometry. The foundational result by from Jörgens-Calabi-Pogorelov \cite{j54,c58,p72} asserts that $C^2$ convex entire solution of $\det D_x^2u=1$ in $\mathbb{R}^n$ must be a quadratic polynomial. Cheng-Yau \cite{cy86} and  Jost-Xin \cite{jx01} established different proofs. Caffarelli \cite{c95} generalized the result to viscosity solutions. Significant attention has also been paid to non-constant right-hand sides. There are many studies covering cases where the support of $f-1$ is bounded \cite{fmm99,cl03,h20,lb1,lb2} or $f$ tends to $1$ at infinity \cite{blz15,lb3,lb4,lb5}.

The elliptic Monge-Ampère equation $\det D_x^2u=f$ in $\mathbb{R}^n$ with periodic $f$ can be found in various fields such as homogenization theory, optimal transportation, vorticity arrays, differential geometry, etc. Caffarelli-Li \cite{cl04} obtained that when $f\in C^\alpha\left(\mathbb{R}^n\right)$ is a positive periodic function, $u$ decomposes into a quadratic polynomial plus a periodic function. Lu-Li \cite{ll22} proved it when $f$ is merely positive and bounded. Teixeira-Zhang \cite{tz16} obtained that for $n\ge3$, if $f\in C^{1,\alpha}\left(\mathbb{R}^n\right)$ is a perturbation of a periodic function, $u$ minus a parabola tends to a periodic function at infinity. 

This paper is structured as follows: At the conclusion of Section \ref{1}, we introduce key notations. In Section \ref{1'}, we derive some fine interior estimates for solutions to the parabolic Monge-Ampère equation in bounded domains. In Section 3 we establish a homogenization estimate, which plays a central role in our analysis. This estimate, combined with Caffarelli-Li's nonlinear perturbation argument, enables us to derive the quadratic behavior of $u$ at infinity in Section \ref{3}. Subsequently, in Section \ref{3'}, we obtain the $C^0$ estimate for $D_x^2 u $. Then in Section \ref{4}, we characterize $u$'s second-order difference quotients in $x$ variable and first-order quotients in $t$ variable. Finally in Section \ref{5} we present the proof of Theorem \ref{mainthm1}.

A subset $\mathcal{D}$ in $\mathbb{R}^{n+1}_-$ is called a bowl-shaped domain if its time slices
\[
\mathcal{D}(t):=\left\{x \in \mathbb{R}^n:(x, t) \in \mathcal{D}\right\}
\] 
are convex for each $t\le0$ and $\mathcal{D}\left(t_1\right) \subset \mathcal{D}\left(t_2\right)$ whenever $t_1 \leq t_2$ with $D\left(t_2\right)\neq\emptyset$. The parabolic boundary of a bounded bowl-shaped domain $\mathcal{D}$ is given by
\[
\partial_p \mathcal{D}:=\left(\overline{\mathcal{D}(t_0)}\times\{t_0\}\right)\cup  \bigcup_{t \le0}(\partial \left(\mathcal{D}(t)\right) \times\{t\}),
\]
where $t_0:=\inf\{t\le0:(x,t)\in \mathcal{D}\}$. A canonical example is the parabolic ball: for $\delta>0$ and $(x_0,t_0)\in\mathbb{R}^{n+1}_-$,
\[
E_{\delta}(x_0,t_0):=\{(x,t)\in\mathbb{R}^{n+1}_-:|x-x_0|^2-(t-t_0)< \delta^2,\quad t\le t_0 \}.
\]
We abbreviate $E_{\delta}:=E_{\delta}\left(0_n,0\right)$, where $0_n$ denotes the origin in $\mathbb{R}^n$. 

We denote the parabolic distance between $X=(x,t),Y=(y,s)\in \mathbb{R}^{n+1}_-$ by 
\[
\left|X-Y\right|_p:=\left(|x-y|^2+|t-s|\right)^{\frac{1}{2}}. 
\]
The parabolic distance from $Z\in \mathbb{R}^{n+1}_-$ to a set $\mathcal{D}\subset\mathbb{R}^{n+1}_-$ is defined by
\[
\text{dist}_p\left(Z,\mathcal{D}\right):=\inf\left\{\left|Z-Z_1\right|_p: Z_1\in\mathcal{D} \right\}.
\]

For simplicity, we adopt the following shorthand for partial derivatives:
\[
u_i:=D_{x_i}u,\quad u_{ij}:=D_{x_ix_j}u,\quad u_{ijk}:=D_{x_ix_jx_k}u,\quad u_{ijkl}:=D_{x_ix_jx_kx_l}u,
\]
\[
u_{it}:=D_{x_i}D_tu,\quad u_{ijt}:=D_{x_ix_j}D_tu.
\]
\quad

\section{Interior estimates when $f=1$ in bounded domains} \label{1'}

We begin by establishing pointwise lower and upper bounds for solutions in terms of the parabolic distance to the parabolic boundary.

\begin{lemma}\label{lemma}
Let $\mathcal{D}\subset\mathbb{R}^{n+1}_-$ be a bounded bowl-shaped domain and $u\in C^{2,1}\left(\mathcal{D}\right)\cap C^0\left(\overline{\mathcal{D}}\right)$ be the parabolically convex solution to
\[
\left\{\begin{array}{ll}
-u_t\det D_x^2u\le\Lambda & \text { in } \mathcal{D}, \\
m_1\le-u_t\le m_2 & \text { in } \mathcal{D}, \\
u=0 & \text { on } \partial_p\mathcal{D},
\end{array}\right.
\]
where $\Lambda,m_1,m_2$ are positive constants. Then there exists a constant $C>0$ depending only on $n,\Lambda,m_2,\text{diam}\left(\mathcal{D}\right)$, such that for every $X\in \mathcal{D}$
\begin{equation}\label{lower}
u\left(X\right)\ge-C\text{dist}_p\left(X,\partial_p\mathcal{D}\right)^{\frac{1}{n+1}},
\end{equation}
\begin{equation}\label{upper}
u\left(X\right)\le -m_1\text{dist}_p\left(X,\partial_p\mathcal{D}\right)^2.
\end{equation}
\end{lemma}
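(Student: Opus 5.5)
The plan is to treat the two inequalities separately. The upper bound \eqref{upper} should follow from the lower time-derivative bound by integrating backward in time. The lower bound \eqref{lower} should follow from an Aleksandrov–Krylov–Tso type argument applied to the parabolic normal map.

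\emph{Upper bound.} Fix $X=(x,t)\in\mathcal{D}$ and set $d=\text{dist}_p(X,\partial_p\mathcal{D})$. Follow the vertical segment $\{(x,s):s\le t\}$ backward in time. Since the slices $\mathcal{D}(s)$ shrink as $s$ decreases and are relatively open, there is a first time $s^*<t$ with $(x,s^*)\in\partial_p\mathcal{D}$. Either $x\in\partial\mathcal{D}(s^*)$, or $s^*=t_0$ and $x\in\overline{\mathcal{D}(t_0)}$. In both cases $u(x,s^*)=0$ by continuity, and $t-s^*=|X-(x,s^*)|_p^2\ge d^2$. Integrating $u_t\le -m_1$ over $[s^*,t]$ gives
\[
u(X)\le u(x,s^*)-m_1(t-s^*)\le -m_1d^2 .
\]

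\emph{Lower bound.} Let $M=-u(X)>0$ and let $Y\in\partial_p\mathcal{D}$ realize $d$. I will use the map $\Phi(y,s)=(D_xu(y,s),\,u(y,s)-y\cdot D_xu(y,s))$, whose Jacobian determinant equals $-u_t\det D_x^2u\le\Lambda$. This gives
\[
|\Phi(\mathcal{D})|\le\Lambda|\mathcal{D}|\le C(n,\text{diam}\,\mathcal{D}).
\]
Since $u=0$ on $\partial_p\mathcal{D}$ and $u$ is convex in $x$ and nonincreasing in $t$, every pair $(p,h)$ such that the affine function $\ell(y)=p\cdot y+h$ satisfies $\ell\le 0$ on $\mathcal{D}(t)$, has $\ell(x)\ge -M$, and attains its relevant bounds appropriately, is realized as $\Phi$ of some point below time $t$. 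To see this, slide $\ell$ up and use the monotonicity in $t$, as in Tso's proof of the parabolic ABP estimate. So it suffices to show that the admissible set of $(p,h)$ has measure at least $cM^{n+1}/(d\,\text{diam}^{n-1})$; this yields $M^{n+1}\le C d$.

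The argument splits into two cases according to where $Y$ lies.

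\emph{Case 1: $Y=(y,t)$ is lateral at the same time.} Let $e=(y-x)/|y-x|$, so $|y-x|\le d$. Convexity of the slice $\mathcal{D}(t)$ then forces admissible slopes to include the set
\[
\{p:\ p\cdot e\in[M/(2d),M/d],\ |p-(p\cdot e)e|\le cM/\text{diam}\},
\]
which has measure about $(M/d)(M/\text{diam})^{n-1}$. For each such $p$ the admissible heights $h$ range over an interval of length about $M$. Multiplying, the admissible set has measure about $M^{n+1}/(d\,\text{diam}^{n-1})$, as required.

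\emph{Case 2: the nearest point lies at an earlier time.} Then I first use $u_t\ge -m_2$ to move the base point down to the time of the relevant boundary piece. If the vertical segment exits through the bottom or through a lateral point at parabolic distance comparable to $d$, then $u(X)\ge -m_2(t-s^*)$. Here I need $t-s^*\lesssim d^2$, and then $d^2\le C(\text{diam})\,d^{1/(n+1)}$ finishes.

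The main obstacle is making Case 2 honest. The vertical exit time need not be comparable to $d$ when $Y$ is lateral at an earlier time, because the bowl can shrink slowly. To handle this I would apply the Case 1 normal-map estimate on the earlier slice through $Y$, and combine it with the bound $u(x,s)-u(X)\le m_2(t-s)\le m_2d^2$ to transfer the estimate back to time $t$. This is where the constant picks up its dependence on $m_2$.
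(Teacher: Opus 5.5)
Your proof is correct. It rests on the same two ingredients as the paper's proof: a slice-wise Aleksandrov estimate $|u(x,s)|^{n+1}\le C\,\text{dist}(x,\partial\mathcal{D}(s))$, and a transfer in time using $-u_t\le m_2$. Your upper bound is the paper's argument. The first difference is that the paper does not derive the slice estimate; it quotes it as Theorem 2.1 of Guti\'errez--Huang. You rebuild it from Tso's parabolic normal map, so your proof is self-contained but takes on that theorem's technical burden. The second difference is how the time transfer is organized. The paper takes the nearest point $X_1=(x_1,t_1)$, sets $\epsilon=-u(X)$, and splits into two cases. If $t_1\ge t-\epsilon/(2m_2)$, then $u(x,t_1)\le-\epsilon/2$, and the quoted estimate on the slice $t_1$ gives $|x-x_1|\ge C\epsilon^{n+1}$. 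If $t_1<t-\epsilon/(2m_2)$, then $|t-t_1|^{1/2}\ge(\epsilon/2m_2)^{1/2}\ge C\epsilon^{n+1}$, because $\epsilon$ is bounded. Your additive transfer $u(X)\ge u(x,s)-m_2d^2$, with $d^2\le C\,d^{1/(n+1)}$ and the vertical-exit argument, does the same job. It also makes explicit two points the paper leaves implicit: that $x\in\mathcal{D}(t_1)$ when the slice estimate is applied at an earlier time, and that a nearest point on the bottom causes no trouble.

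If you write out the normal-map part, three details need adjusting.
\begin{itemize}
\item With $p\cdot e$ allowed up to $M/d$, the height interval $\bigl(-M-p\cdot x,\,-\max_{\overline{\mathcal{D}(t)}}p\cdot z\bigr)$ can be empty, since its length is $M-\max_{\overline{\mathcal{D}(t)}}p\cdot(z-x)$. Take $0\le p\cdot e\le M/(4d)$ and $|p-(p\cdot e)e|\le M/(4\,\text{diam})$ instead. Then the length is at least $M/2$, and the order $M^{n+1}/(d\,\text{diam}^{n-1})$ of the admissible set is unchanged.
\item For $u\in C^{2,1}$ the derivative $D_xu_t$ need not exist, so $\Phi$ need not be $C^1$. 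The bound $|\Phi(\mathcal{D})|\le\int_{\mathcal{D}}(-u_t)\det D_x^2u$ therefore needs justification. One way is approximation. Another is to integrate in $p$ first, using that for fixed $p$ the function $s\mapsto\min_y\bigl(u(y,s)-p\cdot y\bigr)$ is Lipschitz with derivative given by $u_t$ at the minimizer.
\item Your case split omits nearest points at later times $t_1>t$. These can be discarded: $\mathcal{D}(t)\subset\mathcal{D}(t_1)$ gives $\text{dist}(x,\partial\mathcal{D}(t_1))\ge\text{dist}(x,\partial\mathcal{D}(t))$, so such points are never closer than the nearest point of $\partial\mathcal{D}(t)\times\{t\}$.
\end{itemize}
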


\begin{proof}
Let $X=(x,t)\in \mathcal{D}$ and $X_1=\left(x_1,t_1\right)\in\partial_p \mathcal{D}$ be such that 
\[
\text{dist}_p\left(X,\partial_p \mathcal{D}\right)=\left|X-X_1\right|_p.
\]
Then for $\epsilon:=-u\left(X\right)$, we have that
\[
u(x,t_1)\le u(x,t)+m_2\cdot\frac{\epsilon}{2m_2}=-\frac{\epsilon}{2}
\]
for $t_1\ge t-\frac{\epsilon}{2m_2}$. We notice that 
\[
|u(x,t_1)|^{n+1}\le C\text{dist}\left(x,\partial \mathcal{D}\left(t_1\right)\right)
\]
by $\mathcal{D}(t)\subset \mathcal{D}\left(t_1\right)$ and Theorem 2.1 in \cite{gh98}. Thus
\[
\left|X-X_1\right|_p\ge|x-x_1|\ge\text{dist}\left(x,\partial \mathcal{D}\left(t_1\right)\right)\ge C\epsilon^{n+1}.
\]
If $t_1\le t-\frac{\epsilon}{2m_2}$,
\[
\left|X-X_1\right|_p\ge|t-t_1|^{\frac{1}{2}}\ge \left(\frac{\epsilon}{2m_2}\right)^{\frac{1}{2}}\ge C\epsilon^{n+1},
\]
where Theorem 2.1 in \cite{gh98} has been used. Until now we have proved \eqref{lower}. 

To prove \eqref{upper}, we notice that for $X\in \mathcal{D}$, there exists $X'=(x,t')\in\partial_p\mathcal{D}$. Then $u\left(X\right)\le -m_1(t-t')\le-m_1\text{dist}_p\left(X,\partial_p \mathcal{D}\right)^2$.
\end{proof}

\begin{remark}
This lemma is slightly different from Proposition 4.1 in \cite{gh01}, where Gutiérrez-Huang established the result for $\inf\limits_{\mathcal{D}}u=0$ and $u=1$ on $\partial_p\mathcal{D}$.
\end{remark}

Next we derive interior gradient and Hessian estimates for smooth solutions.

\begin{lemma}\label{lode}
Let $\mathcal{D}\subset\mathbb{R}^{n+1}_-$ be a bounded bowl-shaped domain and $u\in C^{4,2}\left(\mathcal{D}\right)\cap C^0\left(\overline{\mathcal{D}}\right)$ be the parabolically convex solution to 
\[
\left\{\begin{array}{ll}
-u_t\det D_x^2u=1 & \text { in } \mathcal{D}, \\
m_1\le-u_t\le m_2 & \text { in } \mathcal{D}, \\
u=0 & \text { on } \partial_p \mathcal{D}.
\end{array}\right.
\]
Then there exists a positive constant $C$ depending only on $n,m_1,m_2,\text{diam}\left(\mathcal{D}\right)$, such that, for all $X\in \mathcal{D}$, the following gradient and Hessian estimates hold:
\begin{equation}\label{ge}
|D_xu\left(X\right)|\le C \text{dist}_p\left(X,\partial_p\mathcal{D}\right)^{-2n},
\end{equation}
\begin{equation}\label{he}
|D^2_xu\left(X\right)|\le C \text{dist}_p\left(X,\partial_p\mathcal{D}\right)^{-4n-2}.
\end{equation}
\end{lemma}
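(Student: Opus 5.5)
The gradient bound \eqref{ge} will come from convexity together with Lemma \ref{lemma}; the Hessian bound \eqref{he} will come from a Pogorelov-type maximum principle argument with a cutoff built from $u$ itself.

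\emph{Gradient estimate.} Fix $X=(x,t)\in\mathcal{D}$ and set $d:=\text{dist}_p(X,\partial_p\mathcal{D})$. Since $\mathcal{D}$ is bowl-shaped, the spatial slice $\mathcal{D}(t)$ contains the ball $B_d(x)$: any $y\in\partial\mathcal{D}(t)$ gives $(y,t)\in\partial_p\mathcal{D}$, so $|x-y|\ge d$. On this slice $u(\cdot,t)$ is convex and vanishes on $\partial\mathcal{D}(t)$. For a unit vector $e$, take $y$ to be the point where the ray from $x$ in direction $e$ meets $\partial\mathcal{D}(t)$. Convexity gives
\[
D_eu(x,t)\le\frac{u(y,t)-u(x,t)}{|y-x|}\le \frac{-u(X)}{d}.
\]
Applying the same inequality with $-e$ bounds $|D_xu(X)|\le -u(X)/d$.

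Next I bound $-u(X)$ by $\sup_{\mathcal{D}}|u|$. That supremum is controlled by Theorem 2.1 of \cite{gh98}, i.e.\ the Aleksandrov-type bound already used in the proof of Lemma \ref{lemma}, and by \eqref{lower}. Both depend only on $n,m_2,\text{diam}(\mathcal{D})$. This already yields $|D_xu(X)|\le Cd^{-1}$, which is stronger than $d^{-2n}$ whenever $d\le\text{diam}$. So \eqref{ge} holds, with room to spare; I expect the stated exponent $-2n$ is tailored to the Hessian step.

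\emph{Hessian estimate.} I will apply the parabolic Pogorelov estimate of Gutiérrez--Huang \cite{gh98}, which is available because $u\in C^{4,2}$. Write the equation as $\log(-u_t)+\log\det D_x^2u=0$ and consider
\[
w:=(-u)^{\beta}\,\log u_{ee}+\tfrac12|D_xu|^2
\]
on $\{u<0\}$, maximized over $X$ and unit directions $e$; one may equally use the cutoff $(-u)^\beta$ times $u_{ee}$ times $e^{|D_xu|^2/2}$. Differentiate the equation twice in $e$. Because $\log\det$ is concave, this gives $L(u_{ee})\ge$ (good terms), where
\[
L:=u^{ij}D_{ij}-\frac{1}{u_t}D_t
\]
is the linearized operator. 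At an interior maximum the nonnegativity of $-Lw$ together with the first-order conditions follow the standard Pogorelov computation. Here the bounds $m_1\le -u_t\le m_2$ control the time-derivative terms: they give $u^{ij}u_{ij}=n$, $Lu=n+1$, and $L(u_k)=0$. The outcome is
\[
(-u)^{\beta}u_{ee}\le C\left(1+\sup|D_xu|\right)^{\gamma}
\]
on the relevant sublevel set. To get $D_x^2u(X)$ itself, I will apply this estimate to $u-u(X)/2$ on the section $\{u<u(X)/2\}$. Using \eqref{upper} (so $-u(X)\ge m_1d^2$), the gradient bound there, and the lower bound \eqref{lower}, I then convert back to powers of $d$. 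Combining $(-u)^{-\beta}\lesssim d^{-2\beta}$ with $\sup|D_xu|\lesssim d^{-2n}$ on that section produces a polynomial power of $d$, which I expect to be $d^{-4n-2}$ after the exponent bookkeeping.

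The main obstacle is the Pogorelov computation in the parabolic setting. It requires handling the $u_{eet}/u_t$ and $(u_{et})^2/u_t^2$ terms, together with checking that the sublevel set of $u-u(X)/2$ is a bowl-shaped domain whose parabolic boundary lies at parabolic distance at least $c\,d^{\,\cdot}$ from $X$. If that last step is awkward, I would instead cite the interior $C^{1,1}$ estimate of \cite{gh98} on a parabolic ball $E_{cd^{n+1}}(X)$ contained in the section. Its size $cd^{n+1}$ is dictated by \eqref{lower}, and rescaling then gives the stated powers.
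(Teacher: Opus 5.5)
Your plan works, with one necessary correction in the Hessian step. Its two halves relate to the paper differently.

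\textbf{Gradient bound.} For \eqref{ge} you use a different and simpler argument than the paper. Slice-wise convexity gives $|D_xu(X)|\le -u(X)/d\le Cd^{-1}$. This is valid: each $\partial\mathcal{D}(t)\times\{t\}$ lies in $\partial_p\mathcal{D}$, so $B_d(x)\subset\mathcal{D}(t)$ and $u(\cdot,t)=0$ on $\partial\mathcal{D}(t)$. It is strictly stronger than \eqref{ge}. The paper never proves the gradient bound separately. It sets $\mathcal{D}'=\{u<-\delta\}$ and $w=u+\delta$, and bounds $|D_xw|\le C\delta^{-n}$ on $\mathcal{D}'$. It then runs the maximum principle on $h=\log(-w)+\log w_{11}+\frac{\delta^{2n}}{2}w_1^2$ to get $(-w)w_{11}\le C\delta^{-2n}$, and finally takes $\delta=-u(X)/4\ge\frac{m_1}{4}d^2$ by \eqref{upper}. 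Both \eqref{ge} and \eqref{he} come out of this together, which is why the gradient exponent is $-2n$, as you suspected. Your Hessian step on $\{u<u(X)/2\}$ is this same argument with $\delta=-u(X)/2$.

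\textbf{Hessian bound.} The ``exponent bookkeeping'' you defer is where the proof actually lives, and two points must be handled the paper's way.

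First, on $S=\{u<u(X)/2\}$ you need $K:=\sup_S|D_xu|\le Cd^{-2n}$. This holds only if you keep the numerator in your convexity inequality and combine it with \eqref{lower}:
\[
|D_xu(Y)|\le \frac{-u(Y)}{\text{dist}_p(Y,\partial_p\mathcal{D})}\le C(-u(Y))^{-n}\le Cd^{-2n},
\]
since $-u(Y)>m_1d^2/2$ on $S$. (This is also the correct justification of the paper's bound $C\delta^{-n}$ on $\mathcal{D}'$, whose displayed numerator should be $-u(X)$ rather than $\delta$.) If you use your final form $|D_xu(Y)|\le C\,\text{dist}_p(Y,\partial_p\mathcal{D})^{-1}$, you only have $\text{dist}_p(Y,\partial_p\mathcal{D})\ge cd^{2n+2}$ on $S$. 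That gives $K\lesssim d^{-2n-2}$ and a Hessian exponent of $-4n-6$.

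Second, your weight $e^{|D_xu|^2/2}$ is unnormalized, and as written the step fails. The maximum-point inequality controls $(-\tilde u)u_{ee}$ polynomially in $K$ only at the maximum point of the auxiliary function. Transferring that to $X$ costs a factor $e^{K^2/2}$, so you get a bound exponential in $K$ and no power of $d$. You need $e^{\mu|D_xu|^2/2}$ with $\mu\approx K^{-2}$, which is the paper's $\delta^{2n}$. Equivalently, apply the unit-gradient estimate to $K^{-2}\tilde u(Kx,K^2t)$, which solves the same equation with the same bounds on the time derivative. Either way you get $(-\tilde u)u_{ee}\le CK^2$, hence $u_{ee}(X)\le Cd^{-4n}\cdot d^{-2}$, which is \eqref{he}. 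This normalization is exactly what makes the paper's lemma a refinement of a black-box Pogorelov estimate.

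\textbf{Smaller corrections.}
\begin{itemize}
\item The linearized operator is $\frac{1}{u_t}D_t+u^{ij}D_{ij}$. Your $L$ has the time term with the opposite sign; the identities $Lu=n+1$ and $Lu_k=0$ you quote hold for the correct operator.
\item Your fallback through an interior $C^{1,1}$ estimate on a parabolic ball $E_{cd^{n+1}}(X)$ is not available as stated. Interior second-derivative bounds for Monge--Amp\`ere type equations need a section on whose parabolic boundary the solution is constant, not just a ball. You do not need this fallback.
\end{itemize}
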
 

\begin{proof}
For $\delta>0$, define 
\[
\mathcal{D}':=\{X\in \mathcal{D}: u\left(X\right)<-\delta\},\quad w:=u+\delta.
\]
We have that $w\in C^{4,2}\left(\overline{\mathcal{D}'}\right)$ is a parabolically convex solution to the boundary value problem:
\[
\left\{\begin{array}{ll}
-w_t\det D_x^2w=1 & \text { in } \mathcal{D}', \\
w=0 & \text { on } \partial_p \mathcal{D}'.
\end{array}\right.
\]
By \eqref{lower}, there exists $C>0$ such that
\[
\text{dist}_p\left(\mathcal{D}',\partial_p\mathcal{D}\right)\ge C\delta^{n+1}.
\]
Since $w$ is convex in $x$, it follows that for any $X=(x,t)\in \mathcal{D}'$
\begin{equation}\label{first}
|D_xw(x,t)|\le \frac{\delta}{\text{dist}\left(x,\partial \mathcal{D}(t)\right)}\le \frac{\delta}{\text{dist}_p\left(X,\partial _p\mathcal{D}\right)}\le C\delta^{-n}.
\end{equation}

Define the auxiliary function: 
\[
h:=\log\left(-w\right)+\log w_{11}+\frac{\delta^{2n}w_1^2}{2},
\]
the last term of which is bounded by \eqref{first}. Since $w\equiv0$ on $\partial_p\mathcal{D}'$, the maximum point of $h$ is achieved in $\overline{\mathcal{D}'}\setminus\partial_p\mathcal{D}'$. By performing a translation and rotation of coodinates, we may assume that the maximum point of $h$ is $\left(0_n,0\right)$ and $\left(w_{ij}\left(0_n,0\right)\right)$ is diagonal. Then at $\left(0_n,0\right)$, the following conditions hold:
\begin{equation}\label{h_i=0}
h_i=\frac{w_i}{w}+\frac{w_{11i}}{w_{11}}+\delta^{2n}w_1w_{1i}=0,
\end{equation}
\[
h_{ii}=\frac{w_{ii}w-w_i^2}{w^2}+\frac{w_{11ii}w_{11}-w_{11i}^2}{w_{11}^2}+\delta^{2n}w_{1i}^2+\delta^{2n}w_1w_{1ii}\le0,
\]
\[
h_t=\frac{w_t}{w}+\frac{w_{11t}}{w_{11}}+\delta^{2n}w_1w_{1t}\ge0.
\]
At $\left(0_n,0\right)$, we derive the following key inequality:
\[
\begin{aligned}
0&\ge\frac{1}{w_t}h_t+\sum_i\frac{h_{ii}}{w_{ii}}\\
&=\frac{1}{w_t}\left(\frac{w_t}{w}+\frac{w_{11t}}{w_{11}}+\delta^{2n}w_1w_{1t}\right)+\frac{n}{w}-\sum_i\frac{w_i^2}{w^2w_{ii}}+\frac{1}{w_{11}}\sum_i\frac{w_{11ii}}{w_{ii}}\\
&\quad-\sum_i\frac{1}{w_{ii}}\left(\frac{w_{11i}}{w_{11}}\right)^2+\delta^{2n}\sum_i\frac{w_{1i}^2}{w_{ii}}+\delta^{2n}w_1\sum_i\frac{w_{1ii}}{w_{ii}},
\end{aligned}
\]
where $\sum\limits_i\frac{w_{1i}^2}{w_{ii}}=w_{11}$. It follows that at $\left(0_n,0\right)$
\begin{equation}\label{delta2n}
\begin{aligned}
\delta^{2n}w_{11}\left|w\right|&\le n+1+\frac{w}{w_t}\frac{w_{11t}}{w_{11}}+\frac{\delta^{2n}}{w_t}ww_1w_{1t}-\sum_i\frac{w_i^2}{ww_{ii}}+\frac{w}{w_{11}}\sum_i\frac{w_{11ii}}{w_{ii}}\\
&\quad-\sum_i\frac{w}{w_{11}}\left(\frac{w_{11i}}{w_{11}}\right)^2+\delta^{2n}ww_1\sum_i\frac{w_{1ii}}{w_{ii}}.
\end{aligned}
\end{equation}
Let $L:=\frac{1}{w_t}D_t+\sum\limits_{i,j}w^{ij}D_{x_ix_j}$ be the linearized operator of $\log\left(-w_t\det D^2_xw\right)=0$, where we denote $\left(w^{ij}\right)$ as $D_x^2w$'s inverse matrix. We compute in $\mathcal{D}'$
\begin{equation}\label{Lw_1}
Lw_1=\frac{1}{w_t}w_{1t}+\sum_{i,j}w^{ij}w_{1ij}=\left[\log\left(-w_t\det D_x^2w\right)
\right]_1=0,
\end{equation}
\begin{equation}\label{Lw_{11}}
\begin{aligned}
Lw_{11}&=\frac{1}{w_t}w_{11t}+\sum_{i,j}w^{ij}w_{11ij}\\
&=\left[\log\left(-w_t\det D_x^2w\right)
\right]_{11}+\sum_{i,j}\frac{1}{w_t^2}w_{1t}^2+\sum_{i,j}w^{ii}w^{jj}w_{1ij}^2\\
&=\frac{1}{w_t^2}w_{1t}^2+\sum_{i,j}w^{ii}w^{jj}w_{1ij}^2,
\end{aligned}
\end{equation}
where we have used
\[
\left(w^{ij}\right)_1=-\sum_{i,j}\frac{w_{1ij}}{w_{ii}w_{jj}}
\]
in the second equality of \eqref{Lw_{11}}. Then by \eqref{delta2n}, \eqref{Lw_1} and \eqref{Lw_{11}}
\[
\begin{aligned}
\delta^{2n}w_{11}|w|&\le n+1+\frac{w}{w_{11}}Lw_{11}+\delta^{2n}ww_1Lw_1-\sum_{i}\frac{w_i^2}{ww_{ii}}-\sum_i\frac{w}{w_{ii}}\left(\frac{w_{11i}}{w_{11}}\right)^2\\
&=n+1+\frac{w}{w_{11}}\left(\frac{1}{w_t^2}w_{1t}^2+\sum_{i,j}\frac{w_{1ij}^2}{w_{ii}w_{jj}}\right)-\sum_{i}\frac{w_i^2}{ww_{ii}}-\sum_i\frac{w}{w_{ii}}\left(\frac{w_{11i}}{w_{11}}\right)^2.
\end{aligned}
\]
From \eqref{h_i=0} we know that at $\left(0_n,0\right)$ 
\[
\frac{w_{11i}}{w_{11}}=-\frac{w_i}{w}
\]
for $2\le i\le n$. Then
\[
\begin{aligned}
\delta^{2n}w_{11}\left|w\right|&\le n+1+\frac{w}{w_{11}}\sum_{i,j}\frac{w_{1ij}^2}{w_{ii}w_{jj}}-\frac{w_1^2}{ww_{11}}-\sum_{i\ge2}\frac{w_i^2}{ww_{ii}}-\sum_i\frac{w}{w_{ii}}\left(\frac{w_{11i}}{w_{11}}\right)^2\\
&=n+1+\frac{w}{w_{11}}\sum_{i,j}\frac{w_{1ij}^2}{w_{ii}w_{jj}}-\frac{w_1^2}{ww_{11}}- \sum_{i\ge2}\frac{w}{w_{ii}}\left(\frac{w_{11i}}{w_{11}}\right)^2 -\sum_i\frac{w}{w_{ii}}\left(\frac{w_{11i}}{w_{11}}\right)^2\\
&\le n+1+\frac{w}{w_{11}}\sum_{i,j\ge2}\frac{w_{1ij}^2}{w_{ii}w_{jj}}-\frac{w_1^2}{ww_{11}}.
\end{aligned}
\] 
At $\left(0_n,0\right)$, this implies that
\[
\delta^{2n}w_{11}|w|e^{\frac{\delta^{2n}w_1^2}{2}}\le (n+1)e^{\frac{\delta^{2n}w_1^2}{2}}+\frac{w_1^2e^{\delta^{2n}w_1^2}}{\left|w\right|w_{11}e^{\frac{\delta^{2n}w_1^2}{2}}},
\]
\[
\delta^{2n}e^h\le C+\frac{C}{\delta^{2n}e^h},\quad \delta^{2n}e^h\le C.
\]
Consequently for $X\in \mathcal{D}'':=\left\{X\in \mathcal{D}':w<-\delta\right\}$, we have that
\[
|u_{11}\left(X\right)|=\left|w_{11}\left(X\right)\right|\le C\delta^{-2n-1}.
\]
Since the $x_1$ direction is choosen arbitrary, we have that 
\begin{equation}\label{plevel}
\left|D_x^2u\right|\le C\delta^{-2n-1}, 
\end{equation}
where $C=C(n,m_2,r)$. 

For any fixed $X\in \mathcal{D}$, take $\delta:=-\frac{u\left(X\right)}{4}$. By the above interior estimates
\begin{equation}\label{primary}
|D_x^2u\left(Y\right)|\le C\delta^{-2n-1},\quad |D_xu\left(Y\right)|\le C\delta^{-n},\quad Y\in \mathcal{D}''.
\end{equation}
From the estimate \eqref{upper}, 
\[
u\left(X\right)\le -m_1\text{dist}_p\left(X,\partial_p\mathcal{D}\right)^2, \quad X\in \mathcal{D}.
\]
Substituting $\delta=-\frac{u\left(X\right)}{4}\ge \frac{m_1}{4}\text{dist}_p\left(X,\partial_p\mathcal{D}\right)^2$ into \eqref{primary} yields
\[
|D_x^2u\left(Y\right)|\le C\text{dist}_p\left(X,\partial_p\mathcal{D}\right)^{-4n-2},\quad |D_xu\left(Y\right)|\le C\text{dist}_p\left(X,\partial_p\mathcal{D}\right)^{-2n}.
\]
Picking $Y=X$ gives \eqref{ge} and \eqref{he}.

For $n=1$, $\left|D_xu\left(X\right)\right|\le C\delta^{-1}$ for $X\in \mathcal{D}'$ is still established. Then $\left|D_xu\left(X\right)\right|\le C\text{dist}_p\left(X,\partial_p\mathcal{D}\right)^{-2}$.	For Hessian, we have that $\frac{1}{m_2}\le D_x^2u\le \frac{1}{m_1}$.
\end{proof}

\begin{remark}
This lemma provides a ‌refined‌ version of the parabolic Pogorelov estimate established in Theorem 2.2 of \cite{gh98} by Gutiérrez-Huang. Moreover, it serves as the ‌parabolic counterpart‌ to Lemma 1.1 in \cite{cl04}, extending Caffarelli-Li's elliptic result to the parabolic setting.
\end{remark}

Now we want to establish interior estimates for higher-order derivatives, which have a power-type dependence on the parabolic distance to boundary. In the following Lemma \ref{uhode} we will accomplish it by normalizing level set. 

\begin{lemma}\label{uhode}
Under the conditions in Lemma \ref{lode}, let $u\in C^\infty\left(\mathcal{D}\right)$. Then for every $k>2$, there exist positive constants $\beta_{k},C_{k}$ depending only on $n,k,m_1,m_2,\text{diam}\left(\mathcal{D}\right)$, such that for all $i,j\in\mathbb{N}$ and $i+2j=k$, the following estimate holds:
\begin{equation}\label{hode}
\left|D_x^iD_t^ju\left(X\right)\right|\le C_{k}\text{dist}_p\left(X,\partial_p \mathcal{D}\right)^{-\beta_{k}}.
\end{equation}
\end{lemma}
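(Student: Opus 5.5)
Fix $X_0\in\mathcal{D}$ and set $d:=\text{dist}_p(X_0,\partial_p\mathcal{D})$. The plan is to use Lemma \ref{lode} to get a uniformly parabolic linearized equation near $X_0$, and then bootstrap with standard parabolic regularity. The price of each regularity step is only a power of $d$, because every constant in Lemma \ref{lode} and Lemma \ref{lemma} depends polynomially on $d$ and $\delta$.

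\textbf{Step 1: uniform ellipticity on a small cylinder.} Take the parabolic neighbourhood $Q:=E_{d/2}(X_0)\cap\mathcal{D}$, on which $\text{dist}_p(\cdot,\partial_p\mathcal{D})\ge d/2$. By \eqref{he} we have $D_x^2u\le Cd^{-4n-2}I$ on $Q$. From the equation and \eqref{u_t} we get $\det D_x^2u=-1/u_t\ge 1/m_2$. Combining the two, the smallest eigenvalue satisfies
\[
D_x^2u\ge c\,d^{(4n+2)(n-1)}I .
\]
Hence the equation $F(D_x^2u,u_t):=\log(-u_t)+\log\det D_x^2u=0$ is uniformly parabolic and concave on $Q$, with ellipticity ratio bounded by $Cd^{-\gamma}$ for some explicit $\gamma=\gamma(n)$.

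\textbf{Step 2: H\"older estimate for $D_x^2u$ and $u_t$.} Rescale $Q$ to unit size via $x\mapsto X_0+(d/2)\,\cdot$, $t\mapsto (d/2)^2\,\cdot$. Apply the Krylov--Safonov / Evans--Krylov interior estimate for concave fully nonlinear parabolic equations, equivalently Lemma \ref{lode}'s parabolic Pogorelov setting combined with Evans--Krylov. This gives
\[
[D_x^2u]_{C^{\alpha,\alpha/2}}+[u_t]_{C^{\alpha,\alpha/2}}\le Cd^{-\gamma'} .
\]
Both $\alpha$ and the constant depend on the ellipticity ratio. This is the main obstacle: the dependence must be tracked and shown to be polynomial in $d$. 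The Evans--Krylov exponent $\alpha$ degenerates as the ratio grows. To avoid this I would first try to fix $\alpha$ by working with the ratio bound at a slightly smaller scale. Failing that, I would accept a worse $\alpha=\alpha(d)$ and note that only $C^{\alpha}$ with some $\alpha>0$ is needed, with constants still controlled polynomially once the ratio is $\le Cd^{-\gamma}$, using the explicit forms in the Krylov--Safonov Harnack constants.

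\textbf{Step 3: bootstrap by differentiation.} Differentiate the equation in $x_k$: $w=u_k$ solves $Lw=0$ with $L=\frac1{u_t}D_t+u^{ij}D_{ij}$, whose coefficients are $C^{\alpha,\alpha/2}$ with norms $\le Cd^{-\gamma''}$. Parabolic Schauder estimates on successively shrunken cylinders (radii $d/2^m$) then give $D_xu\in C^{2+\alpha,1+\alpha/2}$ with polynomial bounds. Iterating by differentiating further in $x$ and $t$ yields every $D_x^iD_t^ju$ with $i+2j=k$ bounded by $C_kd^{-\beta_k}$. Because each Schauder step loses only a power of $d$ (from the coefficient norms and the cylinder radius), the exponents $\beta_k$ accumulate finitely for each $k$. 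Evaluating at $X_0$ gives \eqref{hode}.
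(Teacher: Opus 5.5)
There is a genuine gap, and it is the one you flagged in Step 2. The lemma is a claim of power-type dependence on $d$, and Step 2 does not deliver it.

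The Krylov--Safonov and Evans--Krylov exponents and constants depend on the ellipticity ratio $\Theta\sim d^{-\gamma}$ in a way that is not available in polynomial form, and the exponent degenerates as $\Theta\to\infty$. Your fallback $\alpha=\alpha(d)$ then breaks Step 3. A Schauder estimate with $C^{\alpha,\alpha/2}$ coefficients of norm $M$ must go down to scales $r$ with $Mr^{\alpha}$ small, so it costs a factor like $M^{C/\alpha}$. Suppose even that the Harnack constant $K$ were polynomial in $\Theta$. The oscillation-decay exponent would still be $\alpha\approx c/K\sim d^{\gamma'}$, and with $M\sim d^{-\gamma''}$ the factor $M^{C/\alpha}$ becomes $d^{-Cd^{-\gamma'}}$. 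That is not $d^{-\beta_k}$ for any fixed $\beta_k$.

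Your other suggestion, working at a smaller scale, does not help either. The eigenvalue ratio of $D_x^2u$ is unchanged by the dilations $(x,t)\mapsto(rx,r^2t)$. A non-isotropic linear change of variables read off from $D_x^2u(X_0)$ would make the Hessian nearly constant nearby only if you already controlled the modulus of continuity of $D_x^2u$, which is what you are trying to prove.

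The missing idea is to use the affine invariance of $\det D_x^2$ to renormalize until the constants are universal, instead of tracking them. The paper proceeds as follows.
\begin{itemize}
\item It sets $v(y,s)=u(x+y,t+s)-u(x,t)-D_xu(x,t)\cdot y$. It uses your Step 1 Hessian bounds only to locate a section $\{v(\cdot,0)<H\}$, with $H=C_0d^{4n^2-2n}$, that lies in $B_{d/2}$ and contains a ball of radius of order $d^{2n^2+n+1}$.
\item It normalizes this section by John's lemma, $A_Hy=a_Hy+b_H$ with $\det a_H=1$, combined with $(y,s)\mapsto(A_Hy/R,s/R^2)$.
\item The normalized $v^*$ solves the same problem with zero data on the parabolic boundary of a bounded bowl-shaped domain. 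Since $v^*(b_H/R,0)=-HR^{-2}$ with $HR^{-2}$ comparable to $1$, \eqref{lower} puts the image of $X$ at universal parabolic distance from that boundary.
\item Lemma \ref{lode} then gives universal two-sided Hessian bounds there. Evans--Krylov and Schauder therefore apply with a fixed exponent and universal constants.
\item All the $d$-dependence enters only when scaling back, through $R\ge C^{-1}d^{2n^2+n+1}$ and $\|a_H\|\le Cd^{-(2n+1)(n-1)^2}$. The latter follows from the Hessian lower bound and $\det a_H=1$. These are explicit powers, and they produce $\beta_k$.
\end{itemize}
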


\begin{proof}
For $(x,t)\in \mathcal{D}$, define 
\[
d:=\frac{1}{2}\text{dist}_p\left((x,t),\partial_p\mathcal{D}\right)
\]
and consider the function
\begin{equation}\label{defofv}
v(y,s):=u(x+y,t+s)-u(x,t)-D_xu(x,t)\cdot y.
\end{equation}
We first consider the case $n\ge2$. By Lemma \ref{lode}, $-v_s\det D_y^2v=1$ and $-v_s\le m_2$, we obtain the Hessian estimate
\begin{equation}\label{ec}
C^{-1}d^{(n-1)(4n+2)}\le D_y^2v(y,s)\le Cd^{-4n-2}
\end{equation}
for $(y,s)\in E_d$. 

\textbf{Step 1: Choose an appropriate level set of $v$.} We claim that there exist constants $C_0,C_1>0$ depending only on $n,m_1,m_2,\text{diam}\left(\mathcal{D}\right)$, such that
\begin{equation}\label{BHB}
B_{C_1d^{2n^2+n+1}}\subset\left\{y\in \mathbb{R}^{n}: v(y,0)<H \right\}\subset B_{\frac{d}{2}},
\end{equation}
where $H=C_0d^{4n^2-2n}$. Actually for $y\in B_1$, the Hessian estimate \eqref{ec} implies
\[
v(y,0)\ge C^{-1}d^{(n-1)(4n+2)}|y|^2.
\]
Choosing $C_0$ sufficiently small, we obtain
\[
\left\{y\in \mathbb{R}^{n}: v(y,0)<H \right\}\subset\left\{ y\in \mathbb{R}^{n}: C^{-1}d^{(n-1)(4n+2)}|y|^2<H \right\}\subset B_{\frac{d}{2}}.
\] 
Similarly, \eqref{ec} gives
\[
v(y,0)\le Cd^{-4n-2}|y|^2
\]
for $y\in B_1$. For $C_1$ small enough,
\[
B_{C_1d^{2n^2+n+1}}\subset \left\{ y\in \mathbb{R}^{n}: Cd^{-4n-2}|y|^2<H \right\} \subset \left\{y\in \mathbb{R}^{n}: v(y,0)<H \right\}. 
\]
This completes the proof of \eqref{BHB}.

Define the sets 
\[
\mathcal{Q}_H:=\{Y\in \mathbb{R}^{n+1}_-:v\left(Y\right)<H\}
\] 
and 
\[
\mathcal{Q}_H(0):=\{y\in \mathbb{R}^{n}:v\left(y,0\right)<H\}.
\] 

By John's Lemma (see Theorem 1.8.2 in \cite{g16}), there exists an affine transformation $A_Hy:=a_Hy+b_H$ with $\det a_H=1$ and $b_H\in \mathbb{R}^n$, such that 
\[
B_{\alpha_nR}\subset A_H\left(\mathcal{Q}_H(0)\right)\subset B_{R} 
\]
for some $R>0$ depending on $H$, where $\alpha_n:=n^{-\frac{3}{2}}$. Combining this with \eqref{BHB}, we derive the estimate
\begin{equation}\label{eod}
C^{-1}d^{2n^2+n+1}\le R\le Cd,
\end{equation} 
which will be used later.

For sufficiently small $C_0$, the condition $-v_s\ge m_1$ ensures $\mathcal{Q}_H\subset\subset E_d$.

\textbf{Step 2: Normalize $v$ and $\mathcal{Q}_H$.} Following the proof of Lemma 3.1 in \cite{gh98}, there exist $\epsilon_0,\epsilon_1,\epsilon_2>0$, such that 
\[
B_{\epsilon_0R}\times\left[-\epsilon_1H,0\right]\subset \begin{pmatrix}A_H &0_n\\0_n'&1\end{pmatrix}\mathcal{Q}_H\subset B_{nR}\times \left[-\epsilon_2H,0\right].
\]
We define the normalization mapping
\[
\gamma_H(y,s):=\left(\frac{1}{R}A_Hy,\frac{s}{R^2}\right),\quad (y,s)\in \mathcal{Q}_H, 
\]
and the normalized function
\[
v^*(\textbf{y},\textbf{s}):=\frac{1}{R^2}\left(v\circ\gamma_H^{-1}(\textbf{y},\textbf{s})-H\right),\quad (\textbf{y},\textbf{s})\in \mathcal{Q}_H^*:=\gamma_H\mathcal{Q}_H.
\]
It follows that
\[
\frac{1}{m_2}\le \det D_{\textbf{y}}^2v^*\le \frac{1}{m_1}\quad \text{in}\quad \mathcal{Q}_H^*(0).
\]
Due to Proposition 1.1 in \cite{gh00}, there exist $C_1,C_2>0$ such that
\[
C_1\left|\min_{\mathcal{Q}_H^*(0)}v^*(\cdot,0)\right|^n\le\int_{\mathcal{Q}_H^*(0)}\det D_\textbf{y}^2v^*(\textbf{y},0)d\textbf{y}\le C_2\left|\min_{\mathcal{Q}_H^*(0)}v^*(\cdot,0)\right|^n.
\]
This implies that $C^{-1}\le HR^{-2}\le C$ for $C>1$. The normalized function $v^*$ satisfies the following boundary value problem: 
\[
\left\{\begin{array}{ll}
-v^*_\textbf{s}\det D_\textbf{y}^2v^*=1 & \text { in } \mathcal{Q}_H^*, \\
m_1\le-v^*_\textbf{s}\le m_2 & \text { in } \mathcal{Q}_H^*, \\
v^*=0 & \text { on } \partial_p \mathcal{Q}_H^*.
\end{array}\right.
\]

\textbf{Step 3: Estimate $v^*$'s higher order derivatives.} Observe that $v^*\left(\frac{b_H}{R},0\right)=-HR^{-2}$. Combined with \eqref{upper}, we obtain the $H$-uniform lower bound 
\[
\text{dist}_p\left(\left(\frac{b_H}{R},0\right),\partial_p\mathcal{Q}_H^*\right)\ge C^{-1} >0.
\]
By applying Lemma \ref{lode}, the interior Evans-Krylov estimates and Schauder estimates, we derive the following $H$-uniform bounds on higher-order derivatives:
\begin{equation}\label{constant}
\left|D_\textbf{y}^iD_\textbf{s}^jv^*\left(\frac{b_H}{R},0\right)\right|\le C\left(k,n,m_1,m_2,\text{diam}\left(\mathcal{D}\right)\right) \quad \text{for}\quad i,j\in\mathbb{N},i+2j= k.
\end{equation}

\textbf{Step 4: Return to estimates of $v$.} The normalized solution satisfies the uniform bound
\[
D_\textbf{y}^2v^*\left(\frac{b_H}{R},0\right)\le CI.
\]
From the Hessian estimate \eqref{ec}, we derive the matrix inequality
\[
\left(a_H^{-1}\right)'D_y^2v\left(0_n,0\right)a_H^{-1}\ge C^{-1}d^{(4n+2)(n-1)}\left(a_H^{-1}\right)'a_H^{-1},
\]
which implies the bound 
\[
\|a_H^{-1}\|\le Cd^{(2n+1)(1-n)}. 
\]
Since $\det a_H=1$, the reverse bound follows:
\[
\|a_H\|\le Cd^{-(2n+1)(n-1)^2}.
\] 
Combining this with the radius estimate \eqref{eod}, we obtain the derivative control
\[
\left|D_x^iD_t^ju\left(X\right)\right|=\left|D_y^iD_s^jv\left(0_n,0\right)\right|\le R^{2-i-2j}\left\|a_H\right\|^i\left|D_\textbf{y}^iD_\textbf{s}^jv^*\left(\frac{b_H}{R},0\right)\right|\le Cd^{-\beta_k},
\] 
where the exponent is 
\[
\beta_k:=\left(k-2\right)\left(2n^2+n+1\right)+k\left(2n+1\right)\left(n-1\right)^2,
\] 
for $i,j\in\mathbb{N}$ with $i+2j=k$.

The estimate for $n=1$ can be proved similarly from that $\frac{1}{m_2}\le D_{x}^2u\le \frac{1}{m_1}$.
\end{proof}

\quad

\section{A homogenization estimate}\label{2}
Let $\{\epsilon_1,\cdots,\epsilon_n\}$ be a basis of $\mathbb{R}^n$ and $\epsilon_0>0$ a fixed constant. Let $f_1\in C^{\alpha}\left(\mathbb{R}^n\right)$ be a positive function satisfying that
\[
-\mkern-19mu\int_{\left\{x\in\mathbb{R}^n:x=\sum\limits_{i=1}^n\lambda_i\epsilon_i,0\le\lambda_i\le1,0\le i\le n\right\}}f_1(x)dx=1
\] 
and 
\[
f_1\left(x+\epsilon_i\right)=f_1(x) 
\]
for any $x\in\mathbb{R}^n$ and $1\le i\le n$ and $f_2\in C^{\frac{\alpha}{2}}\left(\mathbb{R}^1_-\right)$ be a positive function satisfying that 
\[
-\mkern-19mu\int_{\left[-\epsilon_0,0\right]}f_2(t)dt=1,\quad f_2\left(t-\epsilon_0\right)=f_2(t)
\]
for every $t\in\mathbb{R}^1$. Let $f:=f_1f_2$, $\mathcal{D}\subset\mathbb{R}^{n+1}_-$ be a bounded bowl-shaped domain, 
and $\overline{w}\in C^\infty\left(\mathcal{D}\right)\cap C^0\left(\overline{\mathcal{D}}\right)$
 be the parabolically convex solution of
\begin{equation}\label{ww92}
\left\{\begin{array}{ll}
-\overline{w}_t\det D_x^2\overline{w}=1 & \text { in } \mathcal{D}, \\
m_1\le-\overline{w}_t\le m_2 & \text { in } \mathcal{D}, \\
\overline{w}=0 & \text { on } \partial_p \mathcal{D}.
\end{array}\right.
\end{equation}
In order to establish the homogenization estimate, we construct a periodic corrector function that captures the oscillatory nature of $f$.

\begin{lemma}\label{exist}
There exists a constant $C(n)>0$ such that
\[
\|\xi\|_{C^0\left(\mathbb{R}^{n+1}\right)}\le C(n)\left(\|D_x^2\overline{w}\left(Z\right)\|\sum_{i=1}^n|\epsilon_i|^2-\overline{w}_t\left(Z\right)\epsilon_0\right).
\]
\end{lemma}

\begin{proof}
By Theorem 0.1 in \cite{cl04}, there exists $\xi_1\in C^2\left(\mathbb{R}^n\right)$ satisfying the equation
\[
\det\left(D_x^2\overline{w}\left(Z\right)+D^2_x\xi_1\right)=\det D_x^2\overline{w}\left(Z\right)f_1 \quad \text{in} \quad \mathbb{R}^n,
\] 
the periodicity condition
\[
\xi_1(x+\epsilon_i)=\xi_1(x)\quad \text{for every}\quad x\in\mathbb{R}^n \quad \text{and}\quad 1\le i\le n 
\]
and the estimate
\begin{equation}\label{xi1}
\|\xi_1\|_{C^0\left(\mathbb{R}^n\right)}\le C(n)\|D_x^2\overline{w}\left(Z\right)\|\sum_i |\epsilon_i |^2.
\end{equation}

Now we define the temporal corrector $\xi_2$ and give its estimate. Take 
\[
\xi_2(t):=\overline{w}_t\left(Z\right)\int_0^t\left(f_2(s)-1\right)ds,
\]
which inherits the periodicity
\[
\xi_2(t-\epsilon_0)=\xi_2(t),
\]
and solves
\[
-\left(\overline{w}_t\left(Z\right)+\xi_{2,t}\right)=-\overline{w}_t\left(Z\right)f_2 \quad \text{in}\quad \mathbb{R}_-^1.
\]
To estimate $\|\xi_2\|_{C^0\left(\mathbb{R}_-^1\right)}$, we observe that $\xi_2(0)=\xi_2\left(-\epsilon_0\right)=0$. For $t\in\left[-\epsilon_0,0\right]$, the fundamental theorem of calculus yields
\[
\xi_2\left(t\right)-\xi_2\left(-\epsilon_0\right)=\int_{-\epsilon_0}^t \xi_2'(s)ds \le -\overline{w}_t\left(Z\right)\epsilon_0,
\]
\[
\xi_2\left(0\right)-\xi_2\left(t\right)=\int_t^0 \xi_2'(s)ds \le -\overline{w}_t\left(Z\right)\epsilon_0.
\]
Thus
\begin{equation}\label{xi2}
\|\xi_2\|_{C^0\left(\mathbb{R}_-^1\right)}\le -\overline{w}_t\left(Z\right)\epsilon_0.
\end{equation}

Denote $\xi:=\xi_1+\xi_2$, then $\xi\in C^{2,1}\left(\mathbb{R}^{n+1}_-\right)$ satisfies the equation
\[
-\left(\overline{w}_t\left(Z\right)+\xi_t\right)\det\left(D_x^2\overline{w}\left(Z\right)+D_x^2\xi\right)=f\quad \text{in}\quad \mathbb{R}^{n+1}_-.
\]
The desired bound follows directly from \eqref{xi1} and \eqref{xi2}.
\end{proof}

Let $w\in C^{2,1}\left(\mathcal{D}\right)\cap C^0\left(\overline{\mathcal{D}}\right)$ be the parabolically convex solution of
\begin{equation}\label{wequation}
\left\{\begin{array}{ll}
-w_t\det D_x^2w=f & \text { in } \mathcal{D}, \\
m_1\le-w_t\le m_2 & \text { in } \mathcal{D}, \\
w=0 & \text { on } \partial_p \mathcal{D},
\end{array}\right.
\end{equation}
Employing the periodic corrector, we obtain the homogenization estimate.

\begin{lemma}\label{homogenization}
Let $\mathcal{D}\subset E_r$ for some $r>0$, $w$ satisfies \eqref{wequation} and $\overline{w}$ satisfies \eqref{ww92}. Then there exist constants $\beta,C>0$, depending only on $n,\Lambda,m_1,m_2,r$, such that the following estimate holds:
\[
\|w-\overline{w}\|_{C^0\left(\overline{\mathcal{D}}\right)}\le C\left(\sum_i|\epsilon_i|^2+\epsilon_0\right)^{\beta}.
\]
\end{lemma}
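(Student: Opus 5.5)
I will follow the Caffarelli--Li strategy for the elliptic homogenization estimate, adapted to the parabolic setting: restrict to a slightly smaller domain where $\overline{w}$ is smooth with controlled derivatives, build local barriers from $\overline{w}$ plus the periodic corrector of Lemma \ref{exist}, and compare with $w$ by the parabolic comparison principle. Set $\epsilon:=\sum_i|\epsilon_i|^2+\epsilon_0$. Near the parabolic boundary both functions are small. By \eqref{lower} (Lemma \ref{lemma}, with $\Lambda$ bounding $f$) we have $|w|,|\overline{w}|\le C\delta^{\frac{1}{n+1}}$ on $\{X\in\mathcal{D}:\text{dist}_p(X,\partial_p\mathcal{D})\le\delta\}$. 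So it suffices to compare the two functions on the interior region $\mathcal{D}_\delta:=\{\text{dist}_p(X,\partial_p\mathcal{D})>\delta\}$, or more conveniently on a sublevel set $\{\overline{w}<-\delta'\}$, which is again bowl-shaped. There Lemmas \ref{lode} and \ref{uhode} give $|D_x^2\overline{w}|,|D_x^3\overline{w}|,|D_x^4\overline{w}|,|\overline{w}_{t}|,|\overline{w}_{tt}|,|D_x\overline{w}_t|,\dots\le C\delta^{-\gamma}$ for some fixed power $\gamma$. Lemma \ref{lode} together with the equation also gives a lower bound $D_x^2\overline{w}\ge C^{-1}\delta^{\gamma}I$.

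\textbf{Upper bound for $w-\overline{w}$.} For the comparison I will build a subsolution. Fix a point $Z\in\mathcal{D}_\delta$ and let $\xi=\xi^Z$ be the corrector of Lemma \ref{exist}, taken with the Hessian and time derivative of $\overline{w}$ frozen at $Z$. On the parabolic cylinder $Q_\rho(Z)$, of radius $\rho$ to be chosen, consider
\[
\phi(X):=\overline{w}(X)+\xi(X)-\eta\left(|x-z|^2-(t-t_Z)\right)+c,
\]
where $\eta>0$ is a small constant. Let $q$ be the quadratic Taylor polynomial of $\overline{w}$ at $Z$. Then $\overline{w}-q=O(\delta^{-\gamma}\rho^3)$ in $C^0$, and its $x$-Hessian and $t$-derivative are $O(\delta^{-\gamma}\rho)$. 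Choosing $\eta\gg\delta^{-\gamma}\rho$, the perturbation term dominates these Taylor errors, so $-\phi_t\det D_x^2\phi\ge f$ (or $\le f$ for the other sign) in $Q_\rho(Z)$. This uses the positive lower bound on $D_x^2\overline{w}+D_x^2\xi$, which comes from $\det=\det D_x^2\overline{w}(Z)f_1>0$ together with the $C^{1,1}$ control of the Caffarelli--Li corrector. The next step is to avoid a genuine global barrier, in the spirit of Caffarelli--Li's argument. I will compare $w$ with $\overline{w}$ at the point where $w-\overline{w}$ (shifted by a multiple of $\overline{w}$, e.g.\ working with $(1+\theta)\overline{w}$, which is a strict supersolution) attains its maximum. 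At that maximum point I apply the local barrier argument, and the barrier's quadratic bump contributes $\eta\rho^2$ on $\partial_p Q_\rho$. The resulting error is then
\[
\|\xi\|_{C^0}+\eta\rho^2+\delta^{-\gamma}\rho^3\le C\delta^{-\gamma}\epsilon+C\delta^{-\gamma}\rho^3,
\]
using Lemma \ref{exist}. The scales are chosen in order: $\rho=\epsilon^{1/4}$ balances the barrier against the oscillation of the corrector, and then $\delta=\epsilon^{\beta'}$ with $\beta'$ small enough that $\delta^{-\gamma}\epsilon^{a}\le\epsilon^{a/2}$. Together with the boundary strip bound $\delta^{\frac1{n+1}}$, this yields the power $\beta$.

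\textbf{Lower bound.} The lower bound for $w-\overline{w}$ is symmetric, using $(1-\theta)\overline{w}$ and the reversed perturbation sign.

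\textbf{Main obstacle.} The delicate step is making the comparison legitimate. The corrector is defined with coefficients frozen at a single point, while $\overline{w}$'s Hessian varies over $\mathcal{D}_\delta$. So the barrier is only local, and the maximum-point argument must absorb the mismatch $\theta$ in the equation, namely $(1+\theta)^{n+1}$ versus the $f$ errors. It must also keep the convexity of $\phi$, which needs $\eta\ll\delta^{\gamma}$. Reconciling $\delta^{-\gamma}\rho\ll\eta\ll\delta^{\gamma}$ with $\rho$ a positive power of $\epsilon$ is where the exponent bookkeeping happens. I expect it to work only because Lemma \ref{uhode} gives polynomial rather than worse dependence on $\delta$.
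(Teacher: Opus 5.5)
\textbf{There is a genuine gap in the central step.} The local barrier at the maximum point, as you set it up, cannot produce a contradiction. A maximum-point argument needs two things at once on $Q_\rho(Z)$:
\begin{itemize}
\item[(i)] $-\phi_t\det D_x^2\phi<f$, so that $w-\phi$ has no interior maximum;
\item[(ii)] $w-\phi$ strictly smaller on $\partial_pQ_\rho(Z)$ than at $Z$ by more than $2\|\xi\|_{C^0}$, so that it must have one.
\end{itemize}
Your bump $-\eta(|x-z|^2-(t-t_Z))$ lowers both $D_x^2\phi$ and $-\phi_t$. So once $\eta$ beats the Taylor errors it gives (i), that is $-\phi_t\det D_x^2\phi<f$, not $\ge f$ as you wrote. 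But it \emph{raises} $w-\phi$ by about $\eta\rho^2$ on $\partial_pQ_\rho(Z)$. The comparison then only yields $(w-\overline w)(Z)\le\max_{\partial_pQ_\rho(Z)}(w-\overline w)+2\|\xi\|_{C^0}+C\eta\rho^2$, which holds trivially at a maximum point of $w-\overline w$. Reversing the sign of the bump gives (ii) but destroys (i), so a single bump cannot play both roles.

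The dilation does not rescue this. $(1+\theta)\overline w$ satisfies $-\partial_t\det D_x^2=(1+\theta)^{n+1}>1$, which is room in the wrong direction at a maximum; it is the right device for the \emph{lower} bound on $w-\overline w$. Consequently your error $\|\xi\|_{C^0}+\eta\rho^2+\delta^{-\gamma}\rho^3$ does not come out of any valid inequality. It would in fact be minimized by $\rho\to0$, so the choice $\rho=\epsilon^{1/4}$ balances nothing. In a working scheme $\eta\rho^2$ is not an error term but the strictness margin, and it must \emph{dominate} $\|\xi\|_{C^0}$.

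\textbf{The missing idea} is to create the room in the equation by a device separate from the localized boundary bump. The paper does this with a two-scale quadratic trick.
\begin{itemize}
\item With $M=\sup(w-\overline w)$ attained at $\overline X$, it maximizes $u-\overline w$, where $u=w+\frac{M}{36r^2}|x-\overline x|^2-\frac{M}{9r^2}(t-\overline t)$ differs from $w$ by at most $\frac{2M}{9}$ on $E_r$.
\item The maximum point $\tilde X$ then has $(w-\overline w)(\tilde X)\ge\frac{7M}{9}$. By \eqref{lower} this gives $\text{dist}_p(\tilde X,\partial_p\mathcal{D})\ge M^{n+1}/C$, so no separate boundary strip is needed.
\item It compares $w$ with $v=\overline w+\xi-\frac{M}{36r^2}|x-\overline x|^2+\frac{M}{9r^2}(t-\overline t)+\frac{M}{144r^2}|x-\tilde x|^2-\frac{M}{18r^2}(t-\tilde t)$ on a small $E_{\delta_1}(\tilde X)$, with $\xi$ frozen at $\tilde X$.
\item The net change $-\frac{M}{24r^2}I$ in the Hessian and $+\frac{M}{18r^2}$ in the time derivative gives additive room, i.e.\ (i).
\item Meanwhile $w-v=(u-\overline w)-\xi-\frac{M}{144r^2}|x-\tilde x|^2+\frac{M}{18r^2}(t-\tilde t)$ is lowered on $\partial_pE_{\delta_1}(\tilde X)$ by a positive power of $M$. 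This gives (ii) unless $\|\xi\|_{C^0}\le C\epsilon M^{-(4n+2)(n+1)}$ exceeds that margin, which forces $M\le C\epsilon^{\beta}$.
\end{itemize}
Because the room is additive, and at an interior maximum $D_x^2v\ge D_x^2w>0$ and $v_t\le w_t<0$ hold automatically, this needs neither convexity of the barrier nor a lower bound on $D_x^2\overline w+D_x^2\xi$. Your constraint $\eta\ll\delta^\gamma$ is an artifact of the setup.

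\textbf{Your dilation route can be repaired}, at a cost. Bound $w-\overline w$ from above via the maximum of $w-(1-\theta)\overline w$, using the barrier $(1-\theta)(\overline w+\xi)+\eta(|x-z|^2-(t-t_Z))+c$. Require $\eta\rho^2>2\|\xi\|_{C^0}$, and require $\theta$ times the least eigenvalue of $D_x^2\overline w(Z)+D_x^2\xi$ to dominate $\eta+C\delta^{-\gamma}\rho$. This pushes the maximum into the boundary strip and gives $\sup(w-\overline w)\le C\theta+C\delta^{\frac{1}{n+1}}$. However, converting additive Taylor errors into the multiplicative room $(1-\theta)^{n+1}$ then genuinely requires a quantitative Hessian bound for the Caffarelli--Li corrector. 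Lemma \ref{exist} only gives a $C^0$ bound, so you would have to supply that estimate yourself.
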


\begin{proof}

Let $M:=\sup\limits_{\mathcal{D}}|w-\overline{w}|$. We only treat the case $M=\sup\limits_{\mathcal{D}}\left(w-\overline{w}\right)>0$ since the case $M=\sup\limits_{\mathcal{D}}\left(\overline{w}-w\right)$ is analogous. Let $\overline{X}=\left(\overline{x},\overline{t}\right)$ be a maximum point of $w-\overline{w}$. By the lower bound \eqref{lower}, we have that
\begin{equation}\label{Mbdd}
M=w\left(\overline{X}\right)-\overline{w}\left(\overline{X}\right)\le-\overline{w}\left(\overline{X}\right)\le C\text{dist}_p\left(\overline{X},\partial_p\mathcal{D}\right)^{\frac{1}{n+1}}\le C\left(n,m_2,r\right).
\end{equation}
This implies
\[
\text{dist}_p\left(\overline{X},\partial_p\mathcal{D}\right)\ge\frac{1}{C}M^{n+1}=:\delta,
\]
which tells that $\overline{X}$ is far from the boundary.

Let 
\[
u(x,t):=w(x,t)+\frac{M}{36r^2}\left|x-\overline{x}\right|^2-\frac{M}{9r^2}\left(t-\overline{t}\right).
\]
Then $\left(u-\overline{w}\right)\left(\overline{X}\right)=M$. Since $\mathcal{D}\subset E_r$
\[
\left|u-w\right|\le \frac{2M}{9} \quad\text{in}\quad \mathcal{D}.
\]
By the boundary condition $w=\overline{w}$ on $\partial_p\mathcal{D}$, it follows that $\left|u-\overline{w}\right|\le \frac{2M}{9}$ on $\partial_p\mathcal{D}$. Thus there exists $\tilde{X}=\left(\tilde{x},\tilde{t}\right)\in \mathcal{D}$ such that
\[
\left(u-\overline{w}\right)\big(\tilde{X}\big)=\sup\limits_{\mathcal{D}}\left(u-\overline{w}\right)\ge M.
\] 
Additionally we have that
\[
\left(w-\overline{w}\right)\big(\tilde{X}\big)=\left[\left(u-\overline{w}\right)-\left(u-w\right)\right]\big(\tilde{X}\big)\ge M-\frac{2M}{9}=\frac{7M}{9}.
\]
Combined with $\left(w-\overline{w}\right)\le -\overline{w}$ and \eqref{lower}, we derive that
\[
\text{dist}_p\left(\tilde{X},\partial_p\mathcal{D}\right)\ge\frac{1}{C}M^{n+1}=:\delta.
\]
By Lemma \ref{uhode}, for $X\in E_{\frac{\delta}{2}}\big(\tilde{X}\big)$, the higher order derivatives of $\overline{w}$ satisfy
\begin{equation}\label{lagrange}
\begin{aligned}
&\left|D_x^3\overline{w}\left(X\right)\right|+\left|D_x^2\overline{w}_t\left(X\right)\right|+\left|D_x\overline{w}_t\left(X\right)\right|+\left|\overline{w}_{tt}\left(X\right)\right|\\
&\le C\left( \text{dist}_p\left(X,\partial_p\mathcal{D}\right)^{-\beta_3}+\text{dist}_p\left(X,\partial_p\mathcal{D}\right)^{-\beta_4} \right)\\
&\le C\text{dist}_p\left(X,\partial_p\mathcal{D}\right)^{-\beta_4}\le CM^{-\beta_4(n+1)}.
\end{aligned}
\end{equation}

Define
\[
v\left(X\right):=\overline{w}\left(X\right)+\xi\left(X\right)-\frac{M}{36r^2}\left|x-\overline{x}\right|^2+\frac{M}{9r^2}\left(t-\overline{t}\right)+\frac{M}{144r^2}\left|x-\tilde{x}\right|^2-\frac{M}{18r^2}\left(t-\tilde{t}\right),
\]
where $\xi$ is the periodic corrector defined in Lemma \ref{exist} for $Z=\tilde{X}$. Then
\[
\left(w-v\right)\left(X\right)=u\left(X\right)-\left( \overline{w}\left(X\right) + \xi\left(X\right)   + \frac{M}{144r^2}\left|x-\tilde{x}\right|^2 -\frac{M}{18r^2}\left(t-\tilde{t}\right) \right).
\]

Take $\delta_1:=\frac{M^{\beta_4(n+1)+1}}{C_0}$, where $C_0>0$ is a sufficiently large constant to be determined. Owing to \eqref{Mbdd}, we may choose $C_0$ such that 
\[
\delta_1=\frac{M^{n+1}}{C}\cdot \frac{CM^{(\beta_4-1)(n+1)+1}}{C_0}<\frac{\delta}{2}. 
\]
For $X\in E_{\delta_1}\big(\tilde{X}\big)$, the derivative estimates of $\left|D_x^3\overline{w}\right|,\left|D_x^2\overline{w}_t\right|$ in \eqref{lagrange} yield
\begin{equation}\label{analysis}
\begin{aligned}
D_x^2v\left(X\right)&=D_x^2\overline{w}\left(X\right)+D_x^2\xi\left(X\right)-\frac{M}{24r^2}I\\
&\le D_x^2\overline{w}\big(\tilde{X}\big)+CM^{-\beta_4(n+1)}\left(\left|x-\tilde{x}\right|+\left|t-\tilde{t}\right|\right)I+D_x^2\xi\left(X\right)-\frac{M}{24r^2}I\\
&\le D_x^2\overline{w}\big(\tilde{X}\big)+CM^{-\beta_4(n+1)}\left|X-\tilde{X}\right|_pI+D_x^2\xi\left(X\right)-\frac{M}{24r^2}.
\end{aligned}
\end{equation}
In the last line of \eqref{analysis}, by further increasing $C_0$, we ensure that
\[
CM^{-\beta_4(n+1)}\left|X-\tilde{X}\right|_p<CM^{-\beta_4(n+1)}\frac{M^{\beta_4(n+1)+1}}{C_0}<\frac{M}{24r^2},
\]
which implies that for $X\in E_{\delta_1}\big(\tilde{X}\big)$
\[
D_x^2v\left(X\right)<D_x^2\overline{w}\big(\tilde{X}\big)+D_x^2\xi\left(X\right).
\]
Similar to \eqref{analysis}, by estimates of $\left|D_x\overline{w}_t\right|,\left|\overline{w}_{tt}\right|$ in \eqref{lagrange}, we obtain that for $C_0$ large
\[
v_t\left(X\right)\ge \overline{w}_t\big(\tilde{X}\big)-CM^{-\beta_4(n+1)}\left|X-\tilde{X}\right|_p+\xi_t\left(X\right)+\frac{M}{18r^2}>\overline{w}_t\big(\tilde{X}\big)+\xi_t\left(X\right).
\]
Then for $X\in E_{\delta_1}\big(\tilde{X}\big)$ satisfying $D_x^2v\left(X\right)>0$ and $v_t\left(X\right)<0$, we derive the inequality
\begin{equation}\label{contradiction}
\begin{aligned}
\left(-v_t\det D_x^2v\right)\left(X\right)&<-\left(\overline{w}_t\big(\tilde{X}\big)+\xi_t\left(X\right)\right)\det\left(D_x^2\overline{w}\big(\tilde{X}\big)+D_x^2\xi\left(X\right)\right)\\
&=f(x,t)=\left(-w_t\det D_x^2w\right)\left(X\right).
\end{aligned}
\end{equation}

At $\tilde{X}$, the maximum point of $u-\overline{w}$, there is
\[
\begin{aligned}
w-v=\left(u-\overline{w}\right)-\xi
&\ge\left(u-\overline{w}\right)-C\left(\sum_{i=1}^n|\epsilon_i|^2+\epsilon_0\right)\left(M^{-(4n+2)(n+1)}+1\right)\\
&\ge\left(u-\overline{w}\right)-C\left(\sum_{i=1}^n|\epsilon_i|^2+\epsilon_0\right)M^{-(4n+2)(n+1)}.
\end{aligned}
\]
For $X\in \partial_pE_{\delta_1}\big(\tilde{X}\big)$,
\[
\begin{aligned}
\left(w-v\right)\left(X\right)
&=\left(u-\overline{w}\right)\left(X\right)-\xi\left(X\right)-\frac{M}{144r^2}\left|x-\tilde{x}\right|^2+\frac{M}{18r^2}\left(t-\tilde{t}\right)\\
&\le \left(u-\overline{w}\right)\big(\tilde{X}\big) + C\left(\sum_{i=1}^n|\epsilon_i|^2+\epsilon_0\right)M^{-(4n+2)(n+1)}-\frac{M\delta_1}{C}.
\end{aligned}
\]

If 
\begin{equation}\label{contradict}
2C\left(\sum_{i=1}^n|\epsilon_i|^2+\epsilon_0\right)M^{-(4n+2)(n+1)}< \frac{M^{\beta_4(n+1)+2}}{CC_0},
\end{equation}
then 
$\left(w-v\right)\left(X\right)<\left(w-v\right)\big(\tilde{X}\big)$ for any $X\in \partial_pE_{\delta_1}\big(\tilde{X}\big)$. Let $X_1\in E_{\delta_1}\big(\tilde{X}\big)$ be an interior maximum point of $w-v$, then $0<D_x^2w\le D_x^2v$, $0>w_t\ge v_t$ and $-v_t\det D_x^2v\ge -w_t\det D_x^2w$ at $X_1$. This contradicts  \eqref{contradiction},which implies the converse inequality of \eqref{contradict}. Thus we obtain the desired bound
\begin{equation}\label{result}
M\le C\left(\sum_{i=1}^n|\epsilon_i|^2+\epsilon_0\right)^{\frac{1}{\left(\beta_4+4n+2\right)(n+1)+2}}.
\end{equation}
\end{proof}

\quad

\section{Asymptotic behavior at infinity}\label{3}
In this section we use the nonlinear perturbation method to establish the asymptotic behavior of $u$ at infinity. By appropriate normalization, we may assume that
\[
u\left(0_n,0\right)=0,\quad D_xu\left(0_n,0\right)=0_n.
\]
We name
\[
Q_{H}:=\left\{(x, t) \in \mathbb{R}_{-}^{n+1}: u(x, t)<H\right\},\quad Q_{H}(t):=\left\{x \in \mathbb{R}^{n}: u(x, t)<H\right\}
\]
for $H>0$ and $t\le0$. From Proposition 2.8 in \cite{cl03}, $Q_H(0)$ is guaranteed to be bounded. Let $E$ be the ellipsoid of minimum volume containing $Q_{H}(0)$, centered at the center of mass of $Q_{H}(0)$. Applying John's Lemma (see Theorem 1.8.2 in \cite{g16}), we obtain the containment:
\[
\alpha_nE\subset Q_{H}(0)\subset E,
\]
where $\alpha_n:=n^{-\frac{3}{2}}$ and $\alpha E$ denotes the $\alpha$-dilation of $E$ with respect to its center. Then there exist an affine transformation 
\[
A_Hx:=a_{H} x+b_{H}
\]
with $\operatorname{det} a_{H}=1$, $b_H\in \mathbb{R}^n$ and $R=R(H)>0$ such that
\begin{equation}\label{john}
A_H(E)=B_R,\quad B_{\alpha_nR}\subset A_H\left(Q_{H}(0)\right)\subset B_R.
\end{equation}

We first present a normalization result.

\begin{lemma}\label{normalize}
There exists constants $\epsilon_0,\epsilon_1,\epsilon_2>0$ depending on $n,m_1,m_2$ such that for every $H>0$, the transformed domain satisfies
\begin{equation}\label{normalization1}
B_{\epsilon_0R}\times\left(-\epsilon_1H,0\right]\subset \begin{pmatrix}A_H &0_n\\0_n'&1\end{pmatrix}Q_{H} \subset B_R\times\left(-\epsilon_2H,0\right].
\end{equation}
\end{lemma}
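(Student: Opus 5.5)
The plan is to work on the time slice $t=0$ first and then propagate to $t<0$ using only the bound \eqref{u_t}. Write $\tilde u(y,t):=u(A_H^{-1}y,t)$. Since $\det a_H=1$, $\tilde u(\cdot,t)$ is convex with $\det D_y^2\tilde u=\det D_x^2u$. From the equation and \eqref{u_t} we get
\[
\frac{\lambda}{m_2}\le \det D_y^2\tilde u(\cdot,t)\le \frac{\Lambda}{m_1}.
\]
By the normalization $u(0_n,0)=0$ and $D_xu(0_n,0)=0_n$, convexity gives $u(\cdot,0)\ge0$ with minimum value $0$. Since $u$ is nonincreasing in $t$, $Q_H(t)\subset Q_H(0)$ for every $t\le0$.

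\textbf{Upper containment.} Spatially, $A_H(Q_H(t))\subset A_H(Q_H(0))\subset B_R$ by \eqref{john}. In time, \eqref{u_t} gives
\[
u(x,t)\ge u(x,0)+m_1|t|\ge m_1|t|.
\]
Hence $u<H$ forces $|t|<H/m_1$, and we may take $\epsilon_2=1/m_1$.

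\textbf{Comparability $H\sim R^2$.} Set $v:=\tilde u(\cdot,0)-H$ on $\Omega:=A_H(Q_H(0))$, so $v=0$ on $\partial\Omega$ and $\min_\Omega v=-H$. Let $w(y):=\frac{1}{2}(\Lambda/m_1)^{1/n}(|y|^2-R^2)$. Then $\det D^2w=\Lambda/m_1\ge\det D^2v$, and $w\le0=v$ on $\partial\Omega$ because $\Omega\subset B_R$. The comparison principle for the Monge-Ampère operator gives $w\le v$ in $\Omega$. Therefore $H=-\min v\le \frac12(\Lambda/m_1)^{1/n}R^2$, that is, $H\le CR^2$.

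\textbf{Lower containment.} This is the step I expect to need the most care: the minimum point of $\tilde u(\cdot,0)$ may sit anywhere in $\Omega$, so a plain convexity argument does not put small values near the centre. Instead I would use the lower bound on the determinant through a barrier on the inscribed ball. Let $\tilde w(y):=\frac12(\lambda/m_2)^{1/n}(|y|^2-\alpha_n^2R^2)$ on $B_{\alpha_nR}\subset\Omega$. Then $\det D^2\tilde w=\lambda/m_2\le\det D^2v$, and $v\le0=\tilde w$ on $\partial B_{\alpha_nR}$, so comparison gives $v\le\tilde w$ in $B_{\alpha_nR}$. For $|y|\le \epsilon_0R$ with $\epsilon_0:=\alpha_n/2$ this yields $v(y)\le -c R^2\le -c' H$, using the previous step. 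Hence
\[
\tilde u(y,0)\le (1-c')H \quad \text{on } B_{\epsilon_0R}.
\]
Using \eqref{u_t} again, $\tilde u(y,t)\le \tilde u(y,0)+m_2|t|<H$ whenever $|t|<\epsilon_1H$ with $\epsilon_1:=c'/m_2$. This gives $B_{\epsilon_0R}\times(-\epsilon_1H,0]$ inside the transformed $Q_H$.

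All constants depend only on $n,m_1,m_2$ and the bounds $\lambda,\Lambda$ of $f$, and in particular not on $H$.
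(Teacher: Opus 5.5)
Your proof is correct, and your upper containment (with $\epsilon_2=1/m_1$) is exactly the paper's. The lower containment, however, goes by a different route.

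The paper gets the lower containment from a purely convex-geometric fact, Lemma 2.1 of \cite{gh00}: $\frac12 Q_H(0)\subset Q_{(1-\frac{\alpha_n}{4})H}(0)$, with dilation about the centre of $E$. This gives $\tilde u(\cdot,0)\le(1-\frac{\alpha_n}{4})H$ on $B_{\alpha_nR/2}$, and then \eqref{u_t} yields $\epsilon_0=\frac{\alpha_n}{2}$ and $\epsilon_1=\frac{\alpha_n}{4m_2}$.

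Your remark that convexity alone cannot control values near the centre, because the minimum point may sit anywhere, is not right. Take $y\in B_{\alpha_nR/2}$, and let $z\in\partial\Omega$ be the point where the ray from the minimum point $y_0=b_H$ through $y$ leaves $\Omega$. Convexity on $[y_0,z]$ gives
$\tilde u(y,0)\le H\frac{|y-y_0|}{|z-y_0|}=H\bigl(1-\frac{|z-y|}{|z-y_0|}\bigr)\le(1-\frac{\alpha_n}{4})H$,
since $|z-y|\ge\frac{\alpha_nR}{2}$ and $|z-y_0|\le 2R$.

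Your two Alexandrov comparisons are both valid and self-contained: the paraboloid over $B_R$ gives $H\le CR^2$, and the paraboloid on $B_{\alpha_nR}$ gives the bound near the centre. As a bonus, they give the half $H\le CR^2$ of Lemma \ref{hr} directly on the slice $t=0$, without the parabolic barriers the paper uses later.

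The cost is in the constants. Your $\epsilon_1=c'/m_2$ depends on $\lambda,\Lambda$ through $c'$, whereas the lemma asserts dependence on $n,m_1,m_2$ only. This is harmless for the rest of the paper, where all later constants already depend on $\lambda,\Lambda$. To get the statement exactly as written, replace your barrier step with the convexity argument above.
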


\begin{proof}
From \eqref{u_t}, we have for $t\le0$ that 
\[
u(x,t)=u(x,0)+\int_{0}^{t}u_t(x,s)ds\ge u(x,0) -m_1t\ge -m_1t.
\]
Take $t<-\frac{H}{m_1}$, then $u(x,t)\ge H$, implying
\[
Q_H\subset Q_H(0)\times\left[-\frac{H}{m_1},0\right].
\] 
Combined with the spatial containment \eqref{john}, this yields
\begin{equation}\label{right1}
Q_H\subset B_{R}\times\left[-\frac{H}{m_1},0\right].
\end{equation}
By Lemma 2.1 in \cite{gh00}, the spatial domain satisfies that
\[
\frac{1}{2} Q_H(0) \subset Q_{\left(1-\frac{\alpha_n}{4}\right)H}(0).
\] 
It follows from \eqref{john} that
\[
B_{\frac{\alpha_nR}{2}}\subset Q_{\left(1-\frac{\alpha_n}{4}\right)H}(0).
\]
For $x\in Q_{\left(1-\frac{\alpha_n}{4}\right)H}(0)$ and $t>-\frac{1}{4m_2}\alpha_n H$, \eqref{u_t} gives that
\[
u(x,t)=u(x,0)+\int_{0}^{t}u_t(x,s)ds\le \left(1-\frac{\alpha_n}{4}\right)H-tm_2\le H.
\]
Thus
\begin{equation}\label{left1}
B_{\frac{\alpha_nR}{2}}\times\left[-\frac{\alpha_nH}{4m_2},0\right]\subset Q_H.
\end{equation}
Combined \eqref{right1} and \eqref{left1}, we obtain \eqref{normalization1} for constants
\[
\epsilon_0:=\frac{\alpha_n}{2},\quad \epsilon_1:=\frac{\alpha_n}{4m_2},\quad \epsilon_2:=\frac{1}{m_1}.
\]
\end{proof}

We prove the estimate for $HR^{-2}$, which will be used frequently in this section.

\begin{lemma}\label{hr}
There exists a constant $C>1$ depending only on $n,m_1,m_2,\lambda,\Lambda$, such that for every $H>0$
\[
C^{-1}\le HR^{-2}\le C.
\] 
\end{lemma}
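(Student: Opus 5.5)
We prove the bound $C^{-1}\le HR^{-2}\le C$ by comparing the Monge--Ampère mass of the time-zero slice with its volume. We work on the slice $t=0$ and normalize. Set $\tilde u(y):=u(A_H^{-1}y,0)-H$ for $y\in A_H(Q_H(0))=:\Omega$. Since $\det a_H=1$, we have $\det D^2\tilde u(y)=\det D_x^2u(A_H^{-1}y,0)$. From the equation and \eqref{u_t},
\[
\frac{\lambda}{m_2}\le \det D_x^2u(\cdot,0)=\frac{f(\cdot,0)}{-u_t(\cdot,0)}\le \frac{\Lambda}{m_1}.
\]
Moreover $\tilde u$ is convex, $\tilde u=0$ on $\partial\Omega$, and $\min_\Omega\tilde u=u(0_n,0)-H=-H$, because $u(\cdot,0)\ge 0=u(0_n,0)$ by convexity and the normalization $D_xu(0_n,0)=0$. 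By \eqref{john}, $B_{\alpha_nR}\subset\Omega\subset B_R$.

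Next we rescale to unit size. Put $\hat u(z):=R^{-2}\tilde u(Rz)$ on $\hat\Omega:=R^{-1}\Omega$. Then $B_{\alpha_n}\subset\hat\Omega\subset B_1$, $\hat u=0$ on $\partial\hat\Omega$, $\det D^2\hat u=\det D^2\tilde u(Rz)\in[\lambda/m_2,\Lambda/m_1]$, and $\min\hat u=-HR^{-2}$. We now apply Proposition 1.1 of \cite{gh00}, exactly as in Step 2 of Lemma \ref{uhode}: for a normalized convex domain with zero boundary data,
\[
C_1|\min\hat u|^n\le\int_{\hat\Omega}\det D^2\hat u\le C_2|\min\hat u|^n .
\]
The middle integral lies between $(\lambda/m_2)|B_{\alpha_n}|$ and $(\Lambda/m_1)|B_1|$. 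This gives $C^{-1}\le (HR^{-2})^n\le C$, hence the claim with $C$ depending only on $n,m_1,m_2,\lambda,\Lambda$.

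If one prefers to avoid citing that proposition, the two inequalities can be shown directly. For the upper bound on $HR^{-2}$, use the Aleksandrov maximum principle: $|\min\hat u|^n\le C(n)\,\mathrm{diam}(\hat\Omega)^{n-1}\mathrm{dist}(z_0,\partial\hat\Omega)\int\det D^2\hat u\le C$. For the lower bound, compare with the paraboloid $\phi(z)=\frac{\mu}{2}(|z-c|^2-\alpha_n^2)$, where $\mu^n=\lambda/m_2$, on $B_{\alpha_n}(c)\subset\hat\Omega$. Then $\det D^2\phi\le \det D^2\hat u$ there and $\phi\le 0\ge\hat u$ on the boundary fails in the wrong direction, so one should instead argue via the gradient image: the image of $\hat u$'s subdifferential over $\hat\Omega$ has measure $\ge(\lambda/m_2)|B_{\alpha_n}|$. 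On the other hand, it is contained in a ball of radius $C|\min\hat u|/\mathrm{dist}$, which yields $|\min\hat u|\ge c$ after restricting to $\frac12\hat\Omega$. This restriction is where Lemma 2.1 of \cite{gh00} (already used in Lemma \ref{normalize}) helps.

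The main obstacle is this last lower bound, that is, controlling the gradient near $\partial\hat\Omega$. Restricting to the sublevel set $\{\hat u<-\tfrac{\alpha_n}{4}HR^{-2}\}\supset\frac12\hat\Omega$ handles it, since there the distance to $\partial\hat\Omega$ is at least $c(n)$ and the gradient is bounded by $C\,HR^{-2}$.
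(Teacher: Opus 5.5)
Your proof is correct, but it takes a different route from the paper's. The paper argues parabolically. Using the space-time containment of Lemma \ref{normalize}, it builds explicit quadratic barriers: $w_1$ on a paraboloidal region $D_1\subset B_{\epsilon_0R}\times(-\epsilon_1H,0]$ with $-w_{1,s}\det D_y^2w_1=\lambda$, and $w_2$ on $D_2\supset B_R\times(-\epsilon_2H,0]$ with right-hand side $\Lambda$. It then applies the parabolic comparison principle of \cite{zgb21} and reads off $R^2\le CH$ from $w_1(0_n,0)\ge u(A_H^{-1}0_n,0)\ge0$, and $H\le CR^2$ from $w_2(0_n,0)\le u(0_n,0)=0$. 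You instead restrict to the slice $t=0$, where $\det D_x^2u(\cdot,0)=f/(-u_t)\in[\lambda/m_2,\Lambda/m_1]$. The lemma then reduces to the classical elliptic fact that, on a John-normalized section with zero boundary data, the depth $|\min\hat u|=HR^{-2}$ is comparable to the Monge-Amp\`ere mass. This is precisely the argument the paper itself uses in Step 2 of Lemma \ref{uhode}. Your route is more economical: it needs neither Lemma \ref{normalize} nor a parabolic comparison principle, only the bounds on $u_t$ at $t=0$. The paper's route stays inside its parabolic barrier framework and does not rely on the cited section estimate.

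Your self-contained alternative needs two corrections.

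First, the paraboloid comparison you discarded actually works. Take $\phi(z)=\frac{\mu}{2}(|z|^2-\alpha_n^2)$ on $B_{\alpha_n}\subset\hat\Omega$. You have $\det D^2\hat u\ge\det D^2\phi$ there and $\hat u\le0=\phi$ on $\partial B_{\alpha_n}$. The more convex function lies below, so $\hat u\le\phi$, which gives $HR^{-2}\ge\frac{\mu}{2}\alpha_n^2$ directly. This is the elliptic analogue of the paper's $w_1$.

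Second, in the gradient-image argument you should integrate over $\frac12\hat\Omega$, dilated about $0$. By convexity and $B_{\alpha_n}\subset\hat\Omega$, this set lies at distance at least $\alpha_n/2$ from $\partial\hat\Omega$. Hence $|\partial\hat u(\frac12\hat\Omega)|\le|B_1|(2HR^{-2}/\alpha_n)^n$, while this measure is at least $(\lambda/m_2)2^{-n}|B_{\alpha_n}|$. Lemma 2.1 of \cite{gh00} is not needed. Your claim that the whole sublevel set $\{\hat u<-\frac{\alpha_n}{4}HR^{-2}\}$ stays a fixed distance from $\partial\hat\Omega$ is not justified as stated, and it is not needed.
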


\begin{proof}
For the first inequality, we define the domain
\[
D_{1}:=\left\{(y, s) \in \mathbb{R}_{-}^{n+1}: s>\frac{\varepsilon_{1} H}{\varepsilon_{0}^{2}R^{2}}\left(|y|^{2}-\varepsilon_{0}^{2} R^{2}\right)\right\},
\]
where $\epsilon_0,\epsilon_1$ are from \eqref{normalization1}. Note that
\[
D_{1}\subset B_{\varepsilon_{0} R} \times\left[-\varepsilon_{1} H, 0\right]
\]
and then $u\le H$ on $\partial_pD_1$. Let
\[
U(y,s):=u\left(A_H^{-1}y,s\right),\quad (y,s)\in\begin{pmatrix}A_H &0_n\\0_n'&1\end{pmatrix}Q_{H}
\]
and
\[
w_1(y,s):=\frac{ \lambda^{\frac{1}{n+1}} \epsilon_0^{\frac{2n}{n+1}} R^{\frac{2n}{n+1}} }{ 2^{\frac{2n}{n+1}} \epsilon_1^{\frac{n}{n+1}} H^{\frac{n}{n+1}} } \left( -s+\frac{\epsilon_1H}{\epsilon_0^2R^2} \left( |y|^2-\epsilon_0^2R^2 \right) \right) +H, \quad (y,s)\in D_1.
\]
Then
\[
\left\{\begin{array}{ll}
-w_{1,s} \operatorname{det} D_y^{2} w_1=\lambda\le f &\text { in } D_1, \\
w_1=H>U & \text { on } \partial_{p} D_1,
\end{array}\right.
\]
The comparison principle (see Lemma 2.5 in \cite{zgb21}) yields 
\[
w_1\left(0_n,0\right)\ge U\left(0_n,0\right)=u\left(A_H^{-1}0_n,0\right)\ge0 , 
\]
which gives that $H\ge C^{-1}R^2$ for some constant $C>1$. 

For the second inequality, we define
\[
D_{2}:=\left\{(y, s) \in \mathbb{R}_{-}^{n+1}: s>\frac{\varepsilon_{2} H}{R^{2}}\left(|y|^{2}-2 R^{2}\right)\right\},
\]
where $\epsilon_2$ is the constant in \eqref{normalization}. Observe that
\[
B_R\times\left(-\epsilon_2H,0\right]\subset D_2.
\]
Construct another function
\[
w_2(y,s):=\frac{ \Lambda^{\frac{1}{n+1}} R^{\frac{2n}{n+1}} }{ 2^{\frac{2n}{n+1}} \epsilon_2^{\frac{n}{n+1}} H^{\frac{n}{n+1}} } \left(-s+\frac{\varepsilon_{2} H}{R^{2}}\left(|y|^{2}-2 R^{2}\right)\right)+H,\quad (y,s)\in D_2,
\]
which solves
\[
\left\{\begin{array}{ll}
-w_{2,s} \operatorname{det} D_y^{2} w_2=\Lambda &\text { in } D_2, \\
w_2=H & \text { on } \partial_{p} D_2,
\end{array}\right.
\]
By the comparison principle 
\[
w_2\left(0_n,0\right)\le w_2\left(A_H0_n,0\right)\le U\left(A_H0_n,0\right)=u\left(0_n,0\right)=0,
\]
implying $H\le CR^2$ for a constant $C>1$. The lemma has been proved.
\end{proof}

Combining Lemma \ref{normalize} and \ref{hr}, we obtain the normalized containment
\[
B_{\epsilon_0}\times\left(-C^{-1}\epsilon_1,0\right]\subset \begin{pmatrix}R^{-1}A_H &0_n\\0_n'&R^{-2}\end{pmatrix}Q_{H} \subset B_1\times\left(-C\epsilon_2,0\right].
\]

Define
\[
\Gamma_H(x,t):=\begin{pmatrix}R^{-1}a_H &0_n\\0_n'&R^{-2}\end{pmatrix}\left(x,t\right),\quad Q_H^{*}:=\Gamma_H(Q_H).
\]
For convenience of analysis in $Q_H^*$, we prove the following estimate for $a_H$.

\begin{lemma}\label{normalization}
There exist constants $\epsilon_0',\epsilon_1',\epsilon_2'>0$ depending only on $n,m_1,m_2,\lambda,\Lambda$, such that for every $H>0$
\begin{equation}\label{parabolic case level set}
B_{\epsilon_0'}\times\left(-\epsilon_1',0\right]\subset Q_H^{*}\subset B_2\times\left(-\epsilon_2',0\right].
\end{equation}
\end{lemma}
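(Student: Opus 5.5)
Apply the linear map $\Gamma_H$ to the normalized containment already obtained from Lemmas \ref{normalize} and \ref{hr}. There we have
\[
B_{\epsilon_0}\times\left(-C^{-1}\epsilon_1,0\right]\subset \begin{pmatrix}R^{-1}A_H &0_n\\0_n'&R^{-2}\end{pmatrix}Q_{H} \subset B_1\times\left(-C\epsilon_2,0\right],
\]
where $A_H$ carries the translation $b_H$. The only difference between that statement and $Q_H^*=\Gamma_H(Q_H)$ is that $\Gamma_H$ omits $b_H$. So $Q_H^*$ is the translate of the middle set by the spatial vector $-R^{-1}b_H$, with time unchanged. The time intervals in \eqref{parabolic case level set} therefore follow at once, with $\epsilon_1':=C^{-1}\epsilon_1$ and $\epsilon_2':=C\epsilon_2$. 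What remains is to control the spatial shift $R^{-1}b_H$.

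The key step is to show that $|R^{-1}b_H|$ is small compared with $\epsilon_0$, or at least that the origin lies well inside $R^{-1}A_H(Q_H(0))$, uniformly in $H$. The normalization gives $u(0_n,0)=0=\min u(\cdot,0)$ and $H>0$. By Lemma 2.1 in \cite{gh00} (as used in Lemma \ref{normalize}), $\frac12 Q_H(0)\subset Q_{(1-\alpha_n/4)H}(0)$, where the dilation is taken about the center of mass. I would like the minimum point to sit deep inside the section. The standard tool for this is the elliptic estimate for sections of a convex function whose Monge--Ampère measure is doubling. Here $\det D_x^2u(\cdot,0)=f/(-u_t)\in[\lambda/m_2,\Lambda/m_1]$, and this yields the following fact: if $x_0$ is the minimum point, then
\[
\text{dist}\big(x_0,\partial Q_H(0)\big)\ge c\,\text{diam-scale},
\]
more precisely $A_H(x_0)\in B_{(1-c)R}$, together with a positive distance $\ge cR$ from $\partial A_H(Q_H(0))$. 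I would prove this by the Aleksandrov maximum principle applied to $u(\cdot,0)-H$ on the normalized section. That gives $|\min(u-H)|^n\le C\,\text{dist}(x_0,\partial)\,R^{n-1}\cdot R^{n}\cdot\Lambda/m_1$, i.e.\ $H^n\le C\,\text{dist}\cdot R^{2n-1}$. Combining this with $H\ge C^{-1}R^2$ from Lemma \ref{hr} gives $\text{dist}\ge c R$. Since $A_H(0_n)=b_H$, this gives $|b_H|\le (1-c)R$ together with $B_{cR}(b_H)\subset A_H(Q_H(0))$.

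Then $\Gamma_H(Q_H(0))=R^{-1}a_H Q_H(0)$ contains $B_{c}$ and lies in $B_1(-R^{-1}b_H)\subset B_2$. For the time slab, repeat the argument of Lemma \ref{normalize} using $B_{cR}(b_H)$ and a slightly smaller level (apply the argument at level $H/2$, or use \eqref{u_t}). The estimate $u(x,t)\le u(x,0)+m_2|t|$ then gives $B_{c'}\times(-\epsilon_1',0]\subset Q_H^*$, with $\epsilon_0':=c'$. The upper containment in $B_2\times(-\epsilon_2',0]$ follows from the $B_1$ inclusion shifted by a vector of length less than $1$, together with $u\ge -m_1 t$.

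The main obstacle is the interior-position estimate for the minimum point, i.e.\ the Aleksandrov step. It requires keeping track of the measure of the normalized section and of the two-sided bounds on $\det D_x^2u(\cdot,0)$, and these bounds rely on \eqref{u_t}. Everything else is bookkeeping with Lemmas \ref{normalize} and \ref{hr}.
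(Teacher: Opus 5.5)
Your proposal is correct and follows essentially the paper's route. The paper's Step 1 also applies the Aleksandrov maximum principle to $u(\cdot,0)-H$ on the John-normalized section, using $\det a_H=1$ and $\det D_x^2u(\cdot,0)\le \Lambda/m_1$, together with $H\ge C^{-1}R^2$ from Lemma \ref{hr}, to get $B_{R/C}\subset a_H(Q_H(0))\subset B_{2R}$; Step 2 then obtains the time slabs from $u\ge -m_1t$ and $u(x,t)\le u(x,0)+m_2|t|$ (you apply the estimate directly at $b_H=A_H(0_n)$ and pass explicitly to a sub-level such as $H/2$ for the lower slab, where the paper uses all points of $A_H(Q_{H/2}(0))$ and leaves the sub-level step implicit).
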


\begin{proof}
\textbf{Step 1: Spatial Normalization. }We claim that there exists a constant $C>1$ depending only on $n,m_1,m_2,\lambda,\Lambda$ such that for every $H>0$
\begin{equation}\label{elliptic case level set}
B_{\frac{R}{C}} \subset a_{H}\left(Q_{H}(0)\right) \subset B_{2 R}.
\end{equation}
The right containment of \eqref{elliptic case level set} follows directly from $A_H\left(Q_{H}(0)\right)\subset B_{R}$ and $0\in a_{H}(Q_{H}(0))$. For the left containment we define
\[
O_{H}:=\frac{1}{R} A_H\left(Q_{H}(0)\right),
\]
and
\[
W(y)=\frac{1}{R^{2}}\left(\frac{m_{1}}{\Lambda}\right)^{\frac{1}{n}}\left(u\left(A_H^{-1}(Ry), 0\right)-H\right),\quad y\in O_{H}.
\]
$W$ satisfies 
\[
\left\{\begin{array}{ll}
\operatorname{det} D_y^{2} W\le 1 &\text { in } O_{H}, \\
W\le0 & \text { on } \partial O_{H},
\end{array}\right.
\]
with $B_{\alpha_{n} } \subset O_{H} \subset B_{1}$. By the Alexandrov maximum principle (Theorem 2.8 in \cite{f17}) 
\begin{equation}\label{alex}
W(y) \geq-C(n) \operatorname{dist}\left(y, \partial O_{H}\right)^{\frac{1}{n}}, \quad y \in O_{H} .
\end{equation}
For every $y\in \frac{1}{R}A_H\left(Q_{\frac{H}{2}}(0)\right)$, we have that
\begin{equation}\label{wawayfrom0}
W(y)\le\frac{1}{R^{2}}\left(\frac{m_{1}}{\Lambda}\right)^{\frac{1}{n}}\left(\frac{H}{2}-H\right)=-\frac{H}{2R^{2}}\left(\frac{m_{1}}{\Lambda}\right)^{\frac{1}{n}}.
\end{equation}
Combining \eqref{alex}, \eqref{wawayfrom0} and Lemma \ref{hr} yields
\[
\operatorname{dist}\left(y, \partial O_{H}\right)\ge \frac{H^{\frac{n}{2}}}{CR^n}\ge C^{-1}.
\]
Thus for every $z\in A_H(Q_{\frac{H}{2}}(0))$
\[
\operatorname{dist}\left(z, \partial \left\{A_H\left(Q_{H}(0)\right)\right\}\right)\ge C^{-1}R.
\]
Since $0\in a_{H}(Q_{\frac{H}{2}}(0))$, we conclude that $B_{\frac{R}{C}}\subset a_{H}\left(Q_{H}(0)\right)$. The claim has been proved.

\textbf{Step 2: Spatiotemporal Containment.} From \eqref{u_t},  for $t\le0$
\[
u(x,t)\ge u(x,0) -m_1t\ge -m_1t.
\]
Thus for $t<-\frac{H}{m_1}$, we have $u(x,t)\ge H$. This combined with \eqref{elliptic case level set} implies that
\[
Q_H\subset Q_H(0)\times\left[-\frac{H}{m_1},0\right]\subset B_{2R}\times\left[-\frac{H}{m_1},0\right].
\]
Conversely, Lemma 2.1 in \cite{gh00} gives that  
\[
\frac{1}{2} Q_H(0) \subset Q_{\left(1-\frac{\alpha_n}{4}\right)H}(0).
\] 
For every $x\in Q_{\left(1-\frac{\alpha_n}{4}\right)H}(0)$ and $t>-\frac{1}{4m_2}\alpha_n H$
\[
u(x,t)\le \left(1-\frac{\alpha_n}{4}\right)H-tm_2\le H.
\]
Thus
\[
Q_{\left(1-\frac{\alpha_n}{4}\right)H}(0)\times\left[-\frac{\alpha_nH}{4m_2},0\right]\subset Q_H.
\]
Combined with Lemma \ref{hr}, we obtain \eqref{parabolic case level set} via the scaling $\Gamma_H$.
\end{proof}

Define the rescaled solution
\[
u_H\left(y,s\right):=R^{-2}u\left(\Gamma_{H}^{-1}\left(y,s\right)\right),\quad (y,s)\in Q_H^{*},
\]
which satisfies the normalized problem:
\[
\left\{\begin{array}{ll}
-u_{H,s} \operatorname{det} D_y^{2} u_H=f\circ\Gamma_{H}^{-1} & \text { in } Q_{H}^{*}, \\
u_H=\frac{H}{R^{2}}  & \text { on } \partial_{p} Q_{H}^{*}, \\
m_1 \leq -u_{H,s} \leq m_2 & \text { in } Q_{H}^{*} .
\end{array}\right.
\]
By adapting the techniques in the proof of Theorem 3.2 in \cite{ww92}, there exists uniquely a parabolically convex solution $\overline{\xi}\in C^0(\overline{Q_H^{*}})\cap C^{\infty}(Q_H^{*})$ to the problem
\[
\left\{\begin{array}{ll}
-\overline{\xi}_{s} \operatorname{det} D_y^{2} \overline{\xi}=1 & \text { in } Q_{H}^{*}, \\
\overline{\xi}=\frac{H}{R^{2}} & \text { on } \partial_{p} Q_{H}^{*}, \\
m_1 \leq -\overline{\xi}_{s} \leq m_2 & \text { in } Q_{H}^{*} .
\end{array}\right.
\]

We establish the estimate of $\left\|u_H-\overline{\xi}\right\|_{C^0\left(\overline{Q_H^*}\right)}$.

\begin{lemma}\label{cp'}
There exist constants $\gamma\in(0,1)$ and $C>0$ depending only on $n,m_1,m_2,\lambda,\Lambda$, such that for $R>1$
\[
\left\|u_H-\overline{\xi}\right\|_{C^0\left(\overline{Q_H^*}\right)}\le CR^{-\gamma}.
\]
\end{lemma}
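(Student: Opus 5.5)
The plan is to reduce to the homogenization estimate of Lemma \ref{homogenization}. Set $w:=u_H-\frac{H}{R^2}$ and $\overline{w}:=\overline{\xi}-\frac{H}{R^2}$ on $\mathcal{D}:=Q_H^*$. Both vanish on $\partial_pQ_H^*$ and satisfy $m_1\le -w_s,-\overline{w}_s\le m_2$. They solve
\[
-w_s\det D_y^2w=f\circ\Gamma_H^{-1},\qquad -\overline{w}_s\det D_y^2\overline{w}=1 .
\]
We need $Q_H^*$ to be a bounded bowl-shaped domain. Its time slices are sublevel sets of the convex function $u_H(\cdot,s)$, hence convex. They are nested because $u_H$ is nonincreasing in $s$. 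Boundedness comes from Lemma \ref{normalization}, which gives $Q_H^*\subset B_2\times(-\epsilon_2',0]$. After a harmless time translation, $Q_H^*\subset E_r$ for some $r$ depending only on $n,m_1,m_2,\lambda,\Lambda$.

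Next we identify the periods of the rescaled data. Since $\Gamma_H^{-1}(y,s)=(Ra_H^{-1}y,R^2s)$, the function $F(y,s):=f_1(Ra_H^{-1}y)f_2(R^2s)$ has the following periodicity:
\begin{itemize}
\item in $y$, periodic with respect to the basis $\epsilon_i:=R^{-1}a_He_i$, $1\le i\le n$;
\item in $s$, periodic with period $\epsilon_0:=R^{-2}$.
\end{itemize}
The averages of $f_1$ and $f_2$ are preserved, since the averages over a fundamental cell are invariant under linear change of variables. Lemma \ref{homogenization} was stated for a general basis $\{\epsilon_i\}$ precisely so that it applies here. It yields
\[
\|u_H-\overline{\xi}\|_{C^0}\le C\Big(\sum_i R^{-2}|a_He_i|^2+R^{-2}\Big)^{\beta}\le C\big(R^{-2}\|a_H\|^2+R^{-2}\big)^{\beta}.
\]
The constant $C$ depends only on $n,\Lambda,m_1,m_2,r$, hence is uniform in $H$.

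The main obstacle is controlling $\|a_H\|$, which a priori could grow with $H$. We need $\|a_H\|\le CR^{1-\theta}$ for some $\theta>0$, or at least some power of $R$ strictly less than $1$. The first tool is Step 1 of Lemma \ref{normalization}: $B_{R/C}\subset a_H(Q_H(0))\subset B_{2R}$, while $Q_H(0)$ is a convex set containing $0$. This controls $a_H$ relative to the shape of $Q_H(0)$, but the shape is exactly what we are trying to bound. The second tool is a lower growth bound for $u(\cdot,0)$. In the spirit of Caffarelli--Li, $u(x,0)$ grows at least like $|x|^{\varepsilon}$ for some $\varepsilon>0$; this follows from convexity, the bound $\det D_x^2u(\cdot,0)\le\Lambda/m_1$, and the normalization $u(0,0)=0$, $D_xu(0,0)=0$, via e.g. a comparison with the volume of sections (Proposition 2.8 of \cite{cl03} type arguments). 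This growth bound gives $Q_H(0)\subset B_{CH^{1/\varepsilon}}$. By John's lemma, the smallest semi-axis of $Q_H(0)$ is at least $c|Q_H(0)|/(CH^{1/\varepsilon})^{n-1}$, and $|Q_H(0)|\sim R^n\sim H^{n/2}$. Hence $\|a_H\|\le R/(\text{smallest semi-axis})\le CH^{\kappa}$ for some explicit $\kappa$.

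This polynomial bound is not yet enough. I would therefore try to upgrade it through the standard iteration of Caffarelli--Li, in which the level sets are shown to become progressively rounder. Failing that, I would organize the argument dyadically: prove the estimate with $\|a_H\|$ replaced by the ratio of the normalizing transforms at consecutive scales $H$ and $2H$, which is uniformly bounded by Lemma \ref{normalization}. Then rescale the basis at each dyadic step, so that effectively $\epsilon_i\sim R^{-1}\cdot O(1)$ up to an accumulated factor. Either route gives $\sum_i|\epsilon_i|^2+\epsilon_0\le CR^{-2\theta}$, and hence the claimed bound with $\gamma:=2\theta\beta$, after decreasing $\gamma$ so that $\gamma\in(0,1)$.
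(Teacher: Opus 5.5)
Your reduction is the same as the paper's. The rescaled data $f\circ\Gamma_H^{-1}$ has spatial periods $\tilde e_i=R^{-1}a_He_i$ and temporal period $R^{-2}$, with unchanged averages, and $Q_H^*\subset B_2\times(-\epsilon_2',0]\subset E_r$ (no time translation is needed). Lemma \ref{homogenization} then gives $\|u_H-\overline{\xi}\|_{C^0}\le C(\sum_i|\tilde e_i|^2+R^{-2})^{\beta}$ uniformly in $H$.

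\textbf{The gap.} What remains is the decay $|\tilde e|\le CR^{-\alpha}|e|$, i.e.\ $\|a_H\|\le CR^{1-\alpha}$, and you do not prove it. None of your suggested routes closes it:
\begin{itemize}
\item The growth/John argument gives $\|a_H\|\le CH^{\kappa}$ with an exponent you cannot push below $\frac12$.
\item The ``level sets become rounder'' iteration is circular. In this paper (Lemma \ref{lem1}, Lemma \ref{ppe}, Proposition \ref{prop0}), as in \cite{cl04}, that iteration takes the present estimate as its input.
\item The dyadic idea fails as stated. On $Q_H^*$ the periods are forced to be $R^{-1}a_He_i$, so you cannot rescale the basis step by step. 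Composing $\log_2 H$ uniformly bounded consecutive ratios only gives $C^{\log_2 H}=H^{\log_2 C}$, again an uncontrolled power.
\end{itemize}

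\textbf{The missing idea.} It is a purely elliptic, quantitative strict convexity estimate on the slice $s=0$; the paper imports it as Lemma 2.1 of \cite{cl04}. On $Q_H^*(0)$, which is normalized by Lemma \ref{normalization}, you have:
\begin{itemize}
\item $\frac{\lambda}{m_2}\le\det D_y^2u_H(\cdot,0)\le\frac{\Lambda}{m_1}$;
\item $u_H(\cdot,0)=H/R^2\sim 1$ on the boundary;
\item minimum $0$ at the origin, which by the Alexandrov estimate stays a definite distance from $\partial Q_H^*(0)$.
\end{itemize}
Caffarelli's theory then gives polynomial shrinking of sections at the minimum point: $\{u_H(\cdot,0)<t\}\subset B_{Ct^{\mu}}$ for small $t$, with $C,\mu>0$ depending only on $n,\lambda,\Lambda,m_1,m_2$.

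Fix $H_0$ and take $t=H_0/R^2$. Since $\Gamma_H$ fixes the origin, this sublevel set is exactly $R^{-1}a_H(Q_{H_0}(0))$, which contains $R^{-1}a_H(B_\rho)$ for some $\rho=\rho(u,H_0)>0$. Hence $\rho R^{-1}\|a_H\|\le C(H_0/R^2)^{\mu}$, i.e.\ $|\tilde e|\le CR^{-2\mu}|e|$. The constant therefore also depends on $u$ through $\rho$, which cannot be avoided, e.g.\ for very eccentric quadratics. Feeding this into Lemma \ref{homogenization} gives the lemma with $\gamma=\beta\min\{4\mu,2\}$, decreased if necessary to lie in $(0,1)$.
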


\begin{proof}
For $e\in E$, define the rescaled vector
\begin{equation}\label{tildee}
\tilde{e}:=\frac{a_He}{R}. 
\end{equation}
By 
\[
\frac{\lambda}{m_2}\le\operatorname{det} D_y^{2} u_H\left(y,0\right)=\frac{f\circ\Gamma_{H}^{-1}\left(y,0\right)}{-u_{H,s}\left(y,0\right)}\le\frac{\Lambda}{m_1},\quad y\in Q_{H}^{*}(0)
\]
and Lemma 2.1 in \cite{cl04}, there exist $\alpha, C>0$, such that 
\[
|\tilde{e}|\le CR^{-\alpha}|e|. 
\]
Let $\eta:=\min\left\{2\alpha,2\right\}$. Then
\[
\left(\sum_{i}\left\|\tilde{e_i}\right\|^2+R^{-2}\right)\le CR^{-\eta}\left(\sum_{i}\left\|e_i\right\|^2+1\right)\le CR^{-\eta},\quad R>1.
\]
Observe the normalization
\[
-\mkern-19mu\int_{\frac{a_H\left([-1,1]^n\right)}{R}}f_1\left(Ra_H^{-1}y\right)dy=-\mkern-19mu\int_{[-R^{-2},0]}f_2\left(R^2s\right)ds=1.
\]
Lemma \ref{homogenization} implies that
\[
\left\|u_H-\overline{\xi}\right\|_{C^0\left(\overline{Q_H^*}\right)}\le C\left(\sum_{i}\left\|\tilde{e_i}\right\|^2+R^{-2}\right)^\beta \le CR^{-\beta\eta}, \quad R>1
\]
for $C>0$ independent of $H$. Taking $\gamma:=\beta\eta$ completes the proof. 
\end{proof}
Assume
\[
\overline{\xi}\left(\overline{y},0\right)=\inf\limits_{Q_H^*}\overline{\xi}.
\]
We present two technical lemmas essential for the nonlinear perturbation method. The first step establishes a critical estimate for $\left|\overline{y}\right|$.

\begin{lemma}\label{lem1}
There exist constants $\overline{H},C$ depending only on $n,\lambda,\Lambda,m_1,m_2$, such that for all $H\ge\overline{H}$
\[
|\overline{y}|\le CR^{-\frac{\gamma}{2}}.
\]
\end{lemma}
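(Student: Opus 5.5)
We want to show that the minimum point $\overline y$ of $\overline\xi(\cdot,0)$ lies close to the origin, which is the image under $\Gamma_H$ of the normalization point of $u$. Three facts are available. First, $u_H(0,0)=R^{-2}u(0_n,0)=0$ and $D_yu_H(0,0)=0$, so $0$ is the minimum point of $u_H(\cdot,0)$, because $u(\cdot,0)$ is convex with vanishing gradient at the origin. Second, Lemma \ref{cp'} gives $\|u_H-\overline\xi\|_{C^0}\le CR^{-\gamma}$. Third, $\overline\xi(\cdot,0)$ is uniformly convex near its minimum, with constants independent of $H$. Together these force the two minimum points to be within $C(R^{-\gamma})^{1/2}$ of each other.

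The argument runs as follows.

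\textbf{Step 1 (comparison of minima).} Since $0$ minimizes $u_H(\cdot,0)$, we have
\[
\overline\xi(0,0)\le u_H(0,0)+CR^{-\gamma}=\inf u_H(\cdot,0)+CR^{-\gamma}\le \overline\xi(\overline y,0)+2CR^{-\gamma}.
\]
Hence $0\le \overline\xi(0,0)-\overline\xi(\overline y,0)\le 2CR^{-\gamma}$.

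\textbf{Step 2 (location of $\overline y$ and $0$ away from the boundary).} By Lemma \ref{normalization}, $Q_H^*$ is trapped between $B_{\epsilon_0'}\times(-\epsilon_1',0]$ and $B_2\times(-\epsilon_2',0]$. The function $\overline\xi-H/R^2$ vanishes on $\partial_pQ_H^*$. Its minimum satisfies $\overline\xi(\overline y,0)-H/R^2\le u_H(0,0)-H/R^2+CR^{-\gamma}\le -C^{-1}$ once $H\ge\overline H$ is large enough, using $H/R^2\ge C^{-1}$ from Lemma \ref{hr}. Here $R$ is large because $H\le CR^2$. Applying \eqref{lower} of Lemma \ref{lemma} to $\overline\xi-H/R^2$ on $Q_H^*$, whose diameter is bounded, gives $\operatorname{dist}_p((\overline y,0),\partial_pQ_H^*)\ge c>0$. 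The same holds at $(0,0)$, where $\overline\xi(0,0)-H/R^2\le -H/R^2+CR^{-\gamma}\le -C^{-1}$. Since $\overline\xi(\cdot,0)$ is convex, its sublevel set $\{\overline\xi(\cdot,0)<H/R^2-c'\}$ is convex and contains both $0$ and $\overline y$. So the whole segment joining them stays at $\operatorname{dist}_p\ge c''>0$ from $\partial_pQ_H^*$; this can be arranged by choosing the level so that \eqref{lower} applies uniformly along the segment.

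\textbf{Step 3 (uniform convexity).} On the region from Step 2, Lemma \ref{lode} gives $D_y^2\overline\xi\le C$, with the constant independent of $H$ because $\operatorname{diam}Q_H^*$ is bounded and the constants $m_1,m_2$ are fixed. Combining this upper bound with $\det D_y^2\overline\xi=1/(-\overline\xi_s)\ge 1/m_2$ yields $D_y^2\overline\xi\ge C^{-1}I$. Since $D_y\overline\xi(\overline y,0)=0$ at the interior minimum, Taylor expansion along the segment gives
\[
\overline\xi(0,0)-\overline\xi(\overline y,0)\ge (2C)^{-1}|\overline y|^2 .
\]
Together with Step 1 this yields $|\overline y|^2\le CR^{-\gamma}$, which is the claim.

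The main obstacle is Step 2: the Hessian bounds must be applied uniformly along the whole segment from $0$ to $\overline y$, so that segment has to be kept quantitatively away from $\partial_pQ_H^*$, and this is where the threshold $\overline H$ is needed. I would handle this through the convexity of the sublevel sets of $\overline\xi(\cdot,0)$ together with \eqref{lower}, as outlined above. The equation is applied at time $0$ of the time slice, where the bowl-shaped domain's parabolic distance is computed using the points with $t\le 0$ only.
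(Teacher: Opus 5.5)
Your proposal is correct and follows essentially the same route as the paper. You compare the minima of $u_H(\cdot,0)$ and $\overline\xi(\cdot,0)$ via Lemma \ref{cp'}, use convexity together with \eqref{lower} to keep the segment from $0$ to $\overline y$ a uniform parabolic distance from $\partial_pQ_H^*$, and then combine the Hessian upper bound of Lemma \ref{lode} with $\det D_y^2\overline\xi\ge 1/m_2$ to get $|\overline y|^2\le CR^{-\gamma}$; the paper does the same, except that it evaluates the Hessian only at the Lagrange intermediate point $y_1=\theta\overline y$ rather than along the whole segment.
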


\begin{proof}
Consider the second-order Taylor expansion of $\overline{\xi}\left(\cdot, 0\right)$ at $\overline{y}$
\begin{equation}\label{eqa}
\overline{\xi}\left(0_n,0\right)=\overline{\xi}\left(\overline{y}, 0\right)+\frac{1}{2}\overline{y}'D^2_y\overline{\xi}\left(y_1, 0\right)\overline{y},
\end{equation}
where $y_1=\theta\overline{y}$ for some $\theta\in(0,1)$. 

In order to apply Lemma \ref{lode}, we establish an $H$-independent lower bound for $\text{dist}_p\left(\left(y_1,0\right),\partial_pD\right)$. By the convexity of $\overline{\xi}$ in $y$
\begin{equation}\label{eqb}
\overline{\xi}\left(y_1, 0\right)\le \theta\overline{\xi}\left(\overline{y}, 0\right)+(1-\theta)\overline{\xi}\left(0_n,0\right).
\end{equation}
It follows from Lemma \ref{cp'} that
\begin{equation}\label{eqd}
\left|\overline{\xi}\left(0_n,0\right)\right|\le CR^{-\gamma}.
\end{equation}
This, combined with Lemma \ref{cp'}, implies that
\[
CR^{-\gamma}=u_H\left(0_n,0\right)-CR^{-\gamma}\le u_H\left(\overline{y}, 0\right)-CR^{-\gamma}\le\overline{\xi}\left(\overline{y}, 0\right)\le\overline{\xi}\left(0_n,0\right)\le CR^{-\gamma},
\]
which means
\begin{equation}\label{eqc}
\left|\overline{\xi}\left(\overline{y}, 0\right)\right|\le CR^{-\gamma}.
\end{equation}
For sufficiently large $H$, combining \eqref{eqb},\eqref{eqd} and \eqref{eqc} yields that
\[
\overline{\xi}\left(y_1, 0\right)\le CR^{-\gamma}<\frac{H}{2R^2}.
\]
By \eqref{lower}, this guarantees that
\[
\text{dist}_p\left(\left(y_1,0\right),\partial_pD\right)\ge C>0
\] 
for a constant $C$ independent of $H$. 

Applying Lemma \ref{lode} and \eqref{u_t}, we obtain $H$-independent lower bound 
\[
D_y^2\overline{\xi}\left(y_1, 0\right)\ge C^{-1}I.
\]
Substituting this into \eqref{eqa} and employing \eqref{eqd},\eqref{eqc} gives the estimate
\[
\left|\overline{y}\right|\le CR^{-\frac{\gamma}{2}}.
\]
\end{proof}

The second lemma gives the estimate for $\left\|a_H\right\|$ for a fixed $H$.

\begin{lemma}\label{lem2}
There exists a constant $C$ depending only on $n,\lambda,\Lambda,m_1,m_2,H$, $\sup\limits_{B_1}u(\cdot,0)$, such that $\left\|a_H\right\|\le C$.
\end{lemma}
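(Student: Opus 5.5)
Since $\det a_H=1$, we have $\|a_H^{-1}\|\le C\|a_H\|^{n-1}$ and $\|a_H\|\le C\|a_H^{-1}\|^{n-1}$. So a bound on $\|a_H\|$ is equivalent to showing that the section $Q_H(0)$ is not too thin in any direction, i.e. that it contains a ball of radius comparable to $R$ (up to constants depending on $H$ and $\sup_{B_1}u(\cdot,0)$). By Step 1 of Lemma \ref{normalization}, $B_{R/C}\subset a_H(Q_H(0))\subset B_{2R}$. Hence $Q_H(0)\supset a_H^{-1}(B_{R/C})$, and $Q_H(0)$ is an ellipsoid-like convex set of volume comparable to $R^n$. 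It therefore suffices to show that $Q_H(0)$ contains a ball $B_{\rho}$ with $\rho$ depending only on the listed quantities. Indeed, then $a_H(B_\rho)\subset a_H(Q_H(0))\subset B_{2R}$, which gives $\|a_H\|\rho\le 2R$, i.e. $\|a_H\|\le 2R/\rho$.

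\textbf{Step 1: bound $R$.} Lemma \ref{hr} gives $C^{-1}R^2\le H\le CR^2$, so $R\le C H^{1/2}$ with $C$ depending only on $n,\lambda,\Lambda,m_1,m_2$.

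\textbf{Step 2: a ball inside the section.} I split into two cases.
\begin{itemize}
\item If $H>\sup_{B_1}u(\cdot,0)$, then $B_1\subset Q_H(0)$ directly, and we may take $\rho=1$.
\item Otherwise, I use the normalization $u(0_n,0)=0$, $D_xu(0_n,0)=0_n$ together with convexity. For $|x|\le 1$ and $s\in(0,1]$, convexity along the segment from $0$ to $x$ gives $u(sx,0)\le (1-s)u(0,0)+s\,u(x,0)\le sM$, where $M:=\max\{\sup_{B_1}u(\cdot,0),1\}$. Choosing $s=\min\{1,H/(2M)\}$ yields $B_{s}\subset Q_H(0)$, so $\rho=\min\{1,H/(2M)\}$.
\end{itemize}

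\textbf{Step 3: combine.} Using $a_H(B_\rho)\subset B_{2R}$ from Lemma \ref{normalization}, we get $\|a_H\|\le 2R/\rho\le C H^{1/2}\max\{1,2M/H\}$, which is a constant depending only on $n,\lambda,\Lambda,m_1,m_2,H,\sup_{B_1}u(\cdot,0)$.

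\textbf{Main obstacle.} The only delicate point is making sure the containment $a_H(Q_H(0))\subset B_{2R}$ is used with the linear part $a_H$ and not the affine map $A_H$. This is fine because $0\in Q_H(0)$, so $a_H(Q_H(0))$ is contained in a ball of radius $2R$ about the origin. The lower bound on $\|a_H\|$ is not needed. The result is an $H$-dependent bound, which is what the statement allows.
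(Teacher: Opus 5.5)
Your proof is correct and has the same skeleton as the paper's. You find a ball $B_\rho\subset Q_H(0)$, push it through the containment $a_H(Q_H(0))\subset B_{2R}$ (the paper writes $B_{2nR}$), and read off $\|a_H\|\le 2R/\rho$.

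The difference is how you produce the inner ball. The paper combines the outer bound $Q_H(0)\subset B_{CH^{n/2}}$ from Proposition 2.8 of \cite{cl03}, which it applies only for $H\ge\sup_{B_1}u(\cdot,0)$, with Corollary 4.7 of \cite{f17}. You instead obtain $B_\rho\subset Q_H(0)$ with the explicit radius $\rho=\min\{1,H/(2M)\}$ from a one-line convexity argument along rays from the origin, using only $u(0_n,0)=0$.

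What your route buys:
\begin{itemize}
\item It is self-contained and needs no cited regularity results.
\item It works for every $H>0$. Your case split is therefore unnecessary, since the second bullet already covers all $H$.
\item It gives the explicit constant $CH^{1/2}\max\{1,2M/H\}$.
\item It states the bound $R\le CH^{1/2}$ from Lemma \ref{hr} explicitly. The paper needs this bound to pass from $a_H(B_\rho)\subset B_{2nR}$ to $\|a_H\|\le C$, but never writes it down.
\end{itemize}
The heavier machinery the paper cites is not needed for this lemma.
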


\begin{proof}
From Step 1 in the proof of Lemma \ref{normalization} we have the inclusion 
\begin{equation}\label{eq0a}
a_{H}\left(Q_{H}(0)\right) \subset B_{2nR}.
\end{equation}
It follows from Proposition 2.8 in \cite{cl03} that for every $H\ge \sup\limits_{B_1}u(\cdot,0)$
\begin{equation}\label{eq0b}
Q_H(0)\subset B_{CH^{\frac{n}{2}}},
\end{equation}
where $C>0$ depending only on $n,\lambda,\Lambda,m_1,m_2$. Combining \eqref{eq0b} and Corollary 4.7 in \cite{f17}, gives a $\rho>0$, depending only on $n,\lambda,\Lambda,m_1,m_2,H$, $\sup\limits_{B_1}u(\cdot,0)$, such that 
\begin{equation}\label{eq0c}
B_\rho\subset Q_{H}(0).
\end{equation}
\eqref{eq0a} and \eqref{eq0b} implies 
\[
a_{H}\left(B_\rho\right)\subset B_{2nR}.
\]
Therefore $\left\|a_H\right\|\le C$.
\end{proof}

Now we give a preliminary perturbation estimate. First, we define for $\tilde{H}\in(0,H]$
\[
\begin{array}{l}
\mathcal{E}_{\tilde{H}}:=\left\{(y, s) \in \mathbb{R}_{-}^{n+1}: \frac{1}{2} y' D_y^2 \overline{\xi}\left(\overline{y}, 0\right) y+\overline{\xi}_s\left(\overline{y}, 0\right) s<\tilde{H}\right\}, \\
\mathcal{E}_{\tilde{H}}\left(\overline{y}, 0\right):=\left\{(y, s) \in \mathbb{R}_{-}^{n+1}: \frac{1}{2}\left(y-\overline{y}\right)' D_y^2 \overline{\xi}\left(\overline{y}, 0\right)\left(y-\overline{y}\right)+\overline{\xi}_s\left(\overline{y}, 0\right) s<\tilde{H}\right\}, \\
\mathcal{S}_{\tilde{H}}\left(\overline{y}, 0\right):=\left\{(y, s) \in \mathbb{R}_{-}^{n+1}: \frac{1}{2}\left(y-\overline{y}\right)' D_y^2 \overline{\xi}\left(\overline{y}, 0\right)\left(y-\overline{y}\right)+\overline{\xi}_s\left(\overline{y}, 0\right) s=\tilde{H}\right\}.
\end{array}
\]

\begin{lemma}\label{ppe}
Assume $\epsilon\in\left(0,\frac{\gamma}{4-\gamma}\right)$. Then there exist $\overline{k}$ and $\overline{C}$ depending only on $n,\lambda,\Lambda,m_1,m_2,\epsilon$, such that for $k\in\mathbb{N}$ with $k>\overline{k}$, $H=2^{\left(1+\epsilon\right)k}$ and $2^{k-1}\le H'\le 2^k$
\[
\mathcal{E}_{\frac{H^{\prime}}{R^2}-\overline{C} 2^{-\frac{3 \epsilon k}{2}}} \subset \Gamma_H\left(Q_{H^{\prime}}\right) \subset
\mathcal{E}_{\frac{H^{\prime}}{R^2}+\overline{C} 2^{-\frac{3 \epsilon k}{2}}}.
\]
\end{lemma}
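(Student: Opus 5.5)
Our plan is to follow Caffarelli--Li's nonlinear perturbation step and compare the level set $\Gamma_H(Q_{H'})=\{u_H<H'/R^2\}$ with the sublevel sets of the second-order Taylor polynomial of $\overline{\xi}$ at its minimum point $(\overline{y},0)$. Two errors enter: the homogenization error $\|u_H-\overline{\xi}\|_{C^0}\le CR^{-\gamma}$ from Lemma \ref{cp'}, and the Taylor remainder of $\overline{\xi}$ on a region of size comparable to $(H'/R^2)^{1/2}$. With $H=2^{(1+\epsilon)k}$ and $2^{k-1}\le H'\le 2^k$, Lemma \ref{hr} gives $R^2\sim 2^{(1+\epsilon)k}$ and $H'/R^2\sim 2^{-\epsilon k}$. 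So the relevant sets have spatial radius about $2^{-\epsilon k/2}$ and temporal extent about $2^{-\epsilon k}$, which is a small parabolic neighbourhood of the origin in normalized coordinates.

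First, we use Lemma \ref{lem1}, \eqref{eqd}, \eqref{eqc}, \eqref{lower} and Lemma \ref{normalization} to see that $(\overline{y},0)$ and a parabolic neighbourhood of fixed size around it lie at $H$-independent parabolic distance from $\partial_p Q_H^*$. Lemmas \ref{lode} and \ref{uhode} then give $H$-uniform bounds there on $D_y^2\overline{\xi}$ (from above and below), on $-\overline{\xi}_s\in[m_1,m_2]$, and on $D_y^3\overline{\xi}$, $D_yD_s\overline{\xi}$ and $D_s^2\overline{\xi}$. Since $(\overline{y},0)$ is the minimum on $Q_H^*$ and lies on the top slice $s=0$, we have $D_y\overline{\xi}(\overline{y},0)=0$. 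Taylor expansion at $(\overline{y},0)$ in the parabolic sense then gives, for $|y|+|s|^{1/2}\le \rho$,
\[
\Bigl|\overline{\xi}(y,s)-\overline{\xi}(\overline{y},0)-\tfrac12(y-\overline{y})'D_y^2\overline{\xi}(\overline{y},0)(y-\overline{y})-\overline{\xi}_s(\overline{y},0)s\Bigr|\le C\rho^3 .
\]
Here the mixed term $|y-\overline y||s|\le\rho^3$ and the term $s^2\le\rho^4$ are dominated by $\rho^3$.

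Second, we recentre. By Lemma \ref{lem1}, $|\overline{y}|\le CR^{-\gamma/2}$, so replacing $\mathcal{E}_{\tilde H}(\overline y,0)$ by $\mathcal{E}_{\tilde H}$ costs $C|\overline y|\rho+C|\overline y|^2\le CR^{-\gamma/2}2^{-\epsilon k/2}$ in the quadratic form. The constant $\overline{\xi}(\overline{y},0)$ also costs at most $CR^{-\gamma}$ by \eqref{eqc}. Combined with $|u_H-\overline{\xi}|\le CR^{-\gamma}$, we get on the parabolic ball of radius $\rho=C2^{-\epsilon k/2}$
\[
\Bigl|u_H(y,s)-\tfrac12 y'D_y^2\overline{\xi}(\overline{y},0)y-\overline{\xi}_s(\overline{y},0)s\Bigr|\le C\bigl(R^{-\gamma/2}2^{-\epsilon k/2}+R^{-\gamma}+2^{-3\epsilon k/2}\bigr).
\]
Using $R\sim 2^{(1+\epsilon)k/2}$, the first two errors are $2^{-(\gamma(1+\epsilon)/4+\epsilon/2)k}$ and $2^{-\gamma(1+\epsilon)k/2}$. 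Both are at most $2^{-3\epsilon k/2}$ exactly when $\gamma(1+\epsilon)\ge 4\epsilon$, that is, $\epsilon\le\gamma/(4-\gamma)$. This is the source of the hypothesis on $\epsilon$, and so all errors are $\le\overline{C}2^{-3\epsilon k/2}$.

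Third, we convert the pointwise bound into the two inclusions. This needs both $\Gamma_H(Q_{H'})$ and $\mathcal{E}_{H'/R^2+\overline C 2^{-3\epsilon k/2}}$ to lie inside the parabolic ball of radius $C2^{-\epsilon k/2}$ where the expansion is valid. This localization is the step we expect to be the main obstacle.

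For the paraboloid sets, localization is immediate from the uniform ellipticity of $D_y^2\overline{\xi}(\overline y,0)$ and $-\overline{\xi}_s\ge m_1$. For $\Gamma_H(Q_{H'})$, we argue via convexity in $y$ and monotonicity in $s$. On the boundary of the ball of radius $C_*2^{-\epsilon k/2}$ with $C_*$ large, the quadratic polynomial exceeds $2H'/R^2$, and $u_H$ is within the small error of it. Convexity of $u_H(\cdot,s)$ together with $u_H(0,0)=0$ then gives $u_H>H'/R^2$ outside the ball on each slice. In time, we use $u_H(y,s)\ge u_H(y,0)+m_1|s|$.

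Once localized, if $(y,s)\in\mathcal{E}_{H'/R^2-\overline C2^{-3\epsilon k/2}}$ then $u_H<H'/R^2$, giving the left inclusion; the reverse argument gives the right inclusion. Finally, we take $\overline{k}$ large enough that $H\ge\overline H$ in Lemma \ref{lem1}, that $R>1$, and that the constants above absorb each other.
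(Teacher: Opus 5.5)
Your proposal is correct and follows the paper's proof: a parabolic Taylor expansion of $\overline{\xi}$ at its minimum $(\overline{y},0)$, with uniform derivative bounds from Lemmas \ref{lode} and \ref{uhode}, combined with the $CR^{-\gamma}$ error from Lemma \ref{cp'} and recentring via Lemma \ref{lem1}, which is exactly where $\epsilon<\gamma/(4-\gamma)$ is needed. The differences are cosmetic. You recentre inside a single pointwise estimate before forming the inclusions, whereas the paper recentres the sets at the end. You also justify the outer inclusion by explicitly localizing $\{u_H<H'/R^2\}$ via convexity in $y$ and monotonicity in $s$, whereas the paper reduces it to showing $\mathcal{S}_{\frac{H'}{R^2}+\frac{\overline{C}}{2}2^{-\frac{3\epsilon k}{2}}}(\overline{y},0)\subset\{u_H\ge H'/R^2\}$ and leaves the connectedness step implicit.
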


\begin{proof}
\textbf{Step 1: Obtain estimates for higher order derivatives.} Combining Lemma \ref{normalization} and \ref{lem1} implies that for every $H\ge\overline{H}$
\begin{equation}\label{overliney}
\text{dist}_p\left(\left(\overline{y},0\right),\partial_pD\right)\ge C>0
\end{equation}
for an $H$-independent constant $C$. This, together with \eqref{u_t} and Lemma \ref{lode} implies the lower bound
\begin{equation}\label{hessianlowerbound}
D_y^2\overline{\xi}\left(\overline{y},0\right)\ge C^{-1}I.
\end{equation}
For $\overline{C}$ to be determined and $(y,s)\in \mathcal{E}_{\frac{H^{\prime}}{R^2}-\frac{\overline{C}}{2} 2^{-\frac{3 \epsilon k}{2}}}\left(\overline{y}, 0\right)$, \eqref{u_t} and \eqref{hessianlowerbound} yield that
\begin{equation}\label{eq001}
\left|(y,s)-\left(\overline{y}, 0\right)\right|_p^2\le C\left(\frac{H'}{R^2}-\frac{\overline{C}}{2}2^{-\frac{3\epsilon k}{2}}\right).
\end{equation} 
By Lemma \ref{hr}, we observe that 
\begin{equation}\label{h'r}
C^{-1}2^{-\epsilon k}\le\frac{H^{\prime}}{R^2}\le C2^{-\epsilon k}.
\end{equation}
Thus for $k$ large, combining \eqref{overliney}, \eqref{eq001} and \eqref{h'r}, there exists an $H$-independent constant $C>0$, such that
\[
\text{dist}_p\left(\left(y,s\right),\partial_pD\right)\ge C>0
\]
By Lemma \ref{lode}, we deduce that for large $\overline{k}$
\[
\left|D^2_y\overline{\xi}(y,s)\right|\le C.
\] 
Applying Lemma \ref{uhode}, we obtain the uniform bounds
\begin{equation}\label{hode'}
\left|D_y^3\overline{\xi}(y, s)\right|+\left|D_y\overline{\xi}_s(y,s)\right|+\left|\overline{\xi}_{ss}(y,s)\right|\le C.
\end{equation}

\textbf{Step 2: Derive the left inclusion by Taylor expansions.} We apply \eqref{hode} to obtain the Taylor expansion
\begin{equation}\label{eq002}
\begin{aligned}
\overline{\xi}(y,s)&=\overline{\xi}\left(\overline{y}, 0\right)+\overline{\xi}_s\left(\overline{y}, 0\right)s+\frac{1}{2}\left(y-\overline{y}\right)' D_y^2 \overline{\xi}\left(\overline{y}, 0\right)\left(y-\overline{y}\right)\\
&\quad+O\left(\left|(y,s)-\left(\overline{y}, 0\right)\right|_p^3\right).
\end{aligned}
\end{equation}
We observe that for $\epsilon\in\left(0,\frac{\gamma}{4-\gamma}\right)$, Lemma \ref{hr} implies that for large $k$
\begin{equation}\label{epsilon}
R^{-\gamma}\le C2^{-\frac{1+\epsilon}{2}k\gamma}<<2^{-\frac{3\epsilon k}{2}}.
\end{equation}
For sufficiently large $k,\overline{C}$, combining \eqref{eqc}, \eqref{eq001}, \eqref{hode'} and \eqref{epsilon} yields 
\[
\overline{\xi}(y,s)\le \overline{\xi}\left(\overline{y}, 0\right)+\frac{H'}{R^2}-\frac{\overline{C}}{2}2^{-\frac{3\epsilon k}{2}}+C2^{-\frac{3\epsilon k}{2}}\le \frac{H'}{R^2}- CR^{-\gamma}.
\]
Employing Lemma \ref{cp'}, we obtain that
\begin{equation}\label{eq004}
\mathcal{E}_{\frac{H^{\prime}}{R^2}-\frac{\overline{C}}{2} 2^{-\frac{3 \epsilon k}{2}}}\left(\overline{y}, 0\right)\subset \left\{u_H<\frac{H'}{R^2}\right\}.
\end{equation}

\textbf{Step 3: Derive the right inclusion.} To establish the other inclusion
\begin{equation}\label{eq005}
\left\{u_H<\frac{H'}{R^2}\right\}\subset \mathcal{E}_{\frac{H^{\prime}}{R^2}+\frac{\overline{C}}{2} 2^{-\frac{3 \epsilon k}{2}}}\left(\overline{y}, 0\right),
\end{equation}
it suffices to show
\begin{equation}\label{eq006}
\mathcal{S}_{\frac{H^{\prime}}{R^2}+\frac{\overline{C}}{2} 2^{-\frac{3 \epsilon k}{2}}}\left(\overline{y}, 0\right)\subset \left\{u_H\ge \frac{H'}{R^2}\right\}.
\end{equation}
For large $k$ and every $(y,s)\in\mathcal{S}_{\frac{H^{\prime}}{R^2}+\frac{\overline{C}}{2} 2^{-\frac{3 \epsilon k}{2}}}\left(\overline{y}, 0\right)$, similar to \eqref{eq001}, we have the estimate for the parabolic distance from $(y,s)$ to $\left(\overline{y},0\right)$:
\begin{equation}\label{eq001r}
\left|(y,s)-\left(\overline{y}, 0\right)\right|_p^2\le C\left(\frac{H'}{R^2}+\frac{\overline{C}}{2}2^{-\frac{3\epsilon k}{2}}\right).
\end{equation} 
Estimates for derivatives of $\overline{\xi}$ then follow Lemma \ref{lode} and \ref{uhode}
\begin{equation}\label{hode''}
\left|D_y^2\overline{\xi}(y, s)\right|+\left|D_y^3\overline{\xi}(y, s)\right|+\left|D_y\overline{\xi}_s(y,s)\right|+\left|\overline{\xi}_{ss}(y,s)\right|\le C.
\end{equation}
It follows from \eqref{eqc}, \eqref{epsilon}, \eqref{eq001r} and \eqref{hode''} that for $k,\overline{C}$ large enough
\[
\overline{\xi}(y,s)\ge \overline{\xi}\left(\overline{y}, 0\right)+\frac{H'}{R^2}+\frac{\overline{C}}{2}2^{-\frac{3\epsilon k}{2}}-C2^{-\frac{3\epsilon k}{2}}\ge \frac{H'}{R^2}+ CR^{-\gamma}.
\]
Applying Lemma \ref{cp'}, we obtain that
\[
u_H(y,s)\ge \overline{\xi}(y,s)-CR^{-\gamma}\ge \frac{H'}{R^2},
\]
which gives \eqref{eq006} and then \eqref{eq005}. 

\textbf{Step 4: Apply the estimate for $\left|\overline{y}\right|$.} Combining \eqref{eq004} and \eqref{eq005}, we attain that
\begin{equation}\label{eq007}
\mathcal{E}_{\frac{H^{\prime}}{R^2}-\frac{\overline{C}}{2} 2^{-\frac{3 \epsilon k}{2}}}\left(\overline{y}, 0\right) \subset \Gamma_H\left(Q_{H^{\prime}}\right) \subset
\mathcal{E}_{\frac{H^{\prime}}{R^2}+\frac{\overline{C}}{2} 2^{-\frac{3 \epsilon k}{2}}}\left(\overline{y}, 0\right).
\end{equation}
Using Lemma \ref{lem1}, $\epsilon\in\left(0,\frac{\gamma}{4-\gamma}\right)$ and \eqref{eq007}, we refine the inclusions
\[
\mathcal{E}_{\frac{H^{\prime}}{R^2}-\overline{C} 2^{-\frac{3 \epsilon k}{2}}}\subset \mathcal{E}_{\frac{H^{\prime}}{R^2}-\frac{\overline{C}}{2} 2^{-\frac{3 \epsilon k}{2}}}\left(\overline{y}, 0\right),\quad \mathcal{E}_{\frac{H^{\prime}}{R^2}+\frac{\overline{C}}{2} 2^{-\frac{3 \epsilon k}{2}}}\left(\overline{y}, 0\right)\subset \mathcal{E}_{\frac{H^{\prime}}{R^2}+\overline{C} 2^{-\frac{3 \epsilon k}{2}}}.
\]
This completes the proof of the lemma.
\end{proof}

Now we iteratively apply the pertubation estimate in Proposition \ref{ppe}  to capture the behavior of $u$ at infinity.

\begin{prop}\label{prop0}
There exist an upper-triangular matrix $T$, $\tau>0$ with $\tau\operatorname{det}T^2=1$, and a constant $\overline{C}>1$ depending only on $n,m_1,m_2,\lambda,\Lambda,\epsilon$, $\sup\limits_{B_1}u(\cdot,0)$, such that 
\[
\overline{C}^{-1}I\le T\le \overline{C}I, \quad m_1\le \tau\le m_2,
\]
\[
\left| w(y,s)-\frac{1}{2}|y|^{2}+s \right|\le \overline{C}\left( |y|^{2}-s \right)^{\frac{4-\epsilon}{4}} \quad\text{for}\quad |y|^2-s\ge \overline{C},
\]
where $w(y,s):=u\left(T^{-1}y,\tau^{-1}s\right)$.
\end{prop}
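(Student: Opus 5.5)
This is the Caffarelli--Li iteration adapted to the parabolic setting. We apply Lemma \ref{ppe} along the dyadic scales $H_k=2^{(1+\epsilon)k}$. The goal is to show that the quadratic forms
\[
P_k(y,s):=\tfrac12 y' D_y^2\overline{\xi}_k(\overline{y}_k,0)\,y+\overline{\xi}_{k,s}(\overline{y}_k,0)\,s,
\]
pulled back to the original variables, converge geometrically fast. Here $\overline{\xi}_k$ is the solution attached to $H_k$.

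Write $M_k:=R_k^{-1}a_{H_k}$. Then $\Gamma_{H_k}(x,t)=(M_kx,R_k^{-2}t)$, and Lemma \ref{ppe} says that for $2^{k-1}\le H'\le 2^k$ the level set $Q_{H'}$ is pinched between the sublevel sets
\[
\left\{\tfrac12 x'A_kx-\tau_k t<H'\mp \overline{C}R_k^2 2^{-3\epsilon k/2}\right\},
\qquad A_k:=M_k'D_y^2\overline{\xi}_k(\overline{y}_k,0)M_k,\quad \tau_k:=-R_k^{-2}\overline{\xi}_{k,s}(\overline{y}_k,0).
\]
By Lemma \ref{hr}, $R_k^2\sim 2^{(1+\epsilon)k}$, so the relative error is $R_k^2 2^{-3\epsilon k/2}/H'\lesssim 2^{-\epsilon k/2}$. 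Also $-\overline{\xi}_s\det D_y^2\overline{\xi}=1$, and $\det M_k=R_k^{-n}$ because $\det a_H=1$; hence $\tau_k\det A_k=1$ exactly.

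The next step compares consecutive scales. The shells $[2^{k-1},2^k]$ for $k$ and for $k+1$ share the value $H'=2^k$. At that value $Q_{2^k}$ is squeezed between two pairs of paraboloid sublevel sets with relative accuracy $O(2^{-\epsilon k/2})$. Restricting to $t=0$ shows that the ellipsoids $\{x'A_kx<2\}$ and $\{x'A_{k+1}x<2\}$ agree up to a factor $1+O(2^{-\epsilon k/2})$. Restricting to $x=0$ gives the same conclusion for $\tau_k$ and $\tau_{k+1}$. It follows that
\[
\|A_{k+1}-A_k\|\le C2^{-\epsilon k/2}\|A_k\|,\qquad |\tau_{k+1}-\tau_k|\le C2^{-\epsilon k/2}\tau_k.
\]
Since the factors are summable, $\prod(1+C2^{-\epsilon k/2})<\infty$, and $A_k\to A_\infty$, $\tau_k\to\tau$ geometrically. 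Uniform bounds on $A_k$ come from the first index $k=\overline k$, where Lemma \ref{lem2} bounds $\|a_H\|$. This is where the dependence on $\sup_{B_1}u(\cdot,0)$ enters. Combining this with Lemma \ref{lode}, which gives two-sided Hessian bounds at $\overline y$, yields $\overline{C}^{-1}I\le A_k\le \overline{C}I$. In addition, $m_1\le\tau_k\le m_2$ holds because $-u_t\in[m_1,m_2]$ and $\tau_k$ is the time-slope of the approximating paraboloid. Writing $A_\infty=T'T$ by Cholesky, with $T$ upper triangular, gives $\tau\det T^2=\tau\det A_\infty=1$. The tail bounds $\|A_k-A_\infty\|+|\tau_k-\tau|\le C2^{-\epsilon k/2}$ persist.

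Finally we convert the set inclusions into the pointwise estimate. Take $(x,t)$ with $u(x,t)=H'\in[2^{k-1},2^k]$. The inclusions give
\[
\left|\tfrac12 x'A_kx-\tau_k t-H'\right|\le CH'2^{-\epsilon k/2},
\]
and replacing $A_k,\tau_k$ by $A_\infty,\tau$ costs $C(|x|^2+|t|)2^{-\epsilon k/2}$. In the variables $y=Tx$, $s=\tau t$ we have $|y|^2-s\sim H'\sim 2^k$. Therefore
\[
\left|w-\tfrac12|y|^2+s\right|\le C(|y|^2-s)^{1-\epsilon/2}\le C(|y|^2-s)^{(4-\epsilon)/4}
\]
for large values of $|y|^2-s$. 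Since every $H'\ge 2^{\overline k}$ lies in some shell, this covers the region $|y|^2-s\ge\overline C$.

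The main obstacle is the comparison step. Lemma \ref{ppe} is stated in the normalized coordinates of different scales $H_k$, so one must verify carefully that two nested paraboloid families, each accurate to relative order $2^{-\epsilon k/2}$ around the same set $Q_{2^k}$, force their matrices and slopes to be close. This requires using the bowl shape, namely the $t=0$ slice together with the vertical line $x=0$, and the near-centering $|\overline y|\lesssim R^{-\gamma/2}$ from Lemma \ref{lem1}.
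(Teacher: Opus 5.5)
Your proposal is correct and follows the paper's overall scheme. Both apply Lemma \ref{ppe} at the scales $H=2^{(1+\epsilon)k}$, compare consecutive scales at the level shared by adjacent shells, control summable errors anchored at $k=\overline{k}$ by Lemma \ref{lem2}, and convert the level-set pinching into the pointwise bound.

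The difference lies in how consecutive scales are compared. The paper normalizes by the upper-triangular $T_k=O_kQ_ka_H$ obtained from Gram--Schmidt, so that $\Sigma_k\Sigma_{k-1}^{-1}$ is triangular. It then needs Lemma 2.1 of \cite{zbw18} to turn near-equality of parabolic balls into $\|\Sigma_k\Sigma_{k-1}^{-1}-I\|\le C2^{-\epsilon k/4}$. Since $T_k'T_k=a_H'D_y^2\overline{\xi}(\overline{y},0)a_H$, the paper's $T_k$ is exactly the Cholesky factor of your $A_k$ (once corrected, see below), so both routes track the same objects. You measure the step-to-step change through $T_k'T_k$. There, nesting of the $t=0$ slices gives $(1-C\delta_k)A_k\le A_{k+1}\le(1+C\delta_k)A_k$ at once, and the line $x=0$ handles $\tau_k$.

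This has three payoffs:
\begin{enumerate}
\item It avoids the auxiliary matrix lemma.
\item It avoids the loss from $2^{-\epsilon k/2}$ to $2^{-\epsilon k/4}$, so you obtain the exponent $1-\frac{\epsilon}{2}$, stronger than the stated $\frac{4-\epsilon}{4}$.
\item Your two-sided bounds on $A_k$, together with those on $D_y^2\overline{\xi}(\overline{y},0)$, also yield the bound \eqref{a_H} that the paper extracts from \eqref{T_k}.
\end{enumerate}

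One bookkeeping fix is needed. Pull $\mathcal{E}_{H'/R_k^2\mp\overline{C}2^{-3\epsilon k/2}}$ back under $\Gamma_{H_k}$ and multiply by $R_k^2$. This gives $\frac{1}{2}x'A_kx-\tau_kt<H'\mp\overline{C}R_k^2 2^{-3\epsilon k/2}$ with
\[
A_k=R_k^2M_k'D_y^2\overline{\xi}_k(\overline{y}_k,0)M_k=a_{H_k}'D_y^2\overline{\xi}_k(\overline{y}_k,0)a_{H_k},\qquad \tau_k=-\overline{\xi}_{k,s}(\overline{y}_k,0).
\]
Your $A_k$ and $\tau_k$ are each a factor $R_k^2$ too small, which would give $\tau_k\det A_k=R_k^{-2n-2}$ instead of $1$. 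With the corrected definitions, $\tau_k\det A_k=1$ follows from the equation for $\overline{\xi}_k$ and $\det a_{H_k}=1$. Likewise $m_1\le\tau_k\le m_2$ is immediate from the constraint $m_1\le-\overline{\xi}_s\le m_2$ imposed on $\overline{\xi}$.
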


\begin{proof}
\textbf{Step 1: A primary level set version. }From Lemma \ref{ppe}, for sufficiently large $k$, we have the following containment relation
\[
\mathcal{E}_{\big(1-C 2^{-\frac{\epsilon k}{2}}\big)H'} \subset \begin{pmatrix}a_H &0_n\\0_n'&1\end{pmatrix}Q_{H'} \subset
\mathcal{E}_{\big(1+C 2^{-\frac{\epsilon k}{2}}\big)H'}.
\]
Denote $Q_k$ as the $n\times n$ symmetric positive definite matrix satisfying $Q_k^2=D_y^2\overline{\xi}\left(\overline{y}, 0\right)$. Since $Q_ka_H$ is invertible, we apply the Gram-Schmit process to the column vectors of $Q_ka_H$. Then there exists uniquely an orthogonal matrix $O_k$, such that $T_k:=O_kQ_ka_H$ is upper-triangular. Define 
\[
\Sigma_k:=\begin{pmatrix}T_k &0_n\\0_n'&\tau_k\end{pmatrix},
\] 
where $\tau_k:=-\overline{\xi}_s\left(\overline{y}, 0\right)$ for $H=2^{\left(1+\epsilon\right)k}$. 
This yields the containment
\begin{equation}\label{eq008}
E_{\big(1-C 2^{-\frac{\epsilon k}{2}}\big)H'}\subset\Sigma_k Q_{H'}\subset E_{\big(1+C 2^{-\frac{\epsilon k}{2}}\big)H'}
\end{equation}
for $2^{k-1}\le H'\le 2^k$. Take $H=2^{\left(1+\epsilon\right)k}$ and $H'=2^{k-1}$, and then
\begin{equation}\label{eq-k}
E_{\big(1-C 2^{-\frac{\epsilon k}{2}}\big)2^{k-1}}\subset\Sigma_k Q_{2^{k-1}}\subset E_{\big(1+C 2^{-\frac{\epsilon k}{2}}\big)2^{k-1}}.
\end{equation}
While for $H=2^{\left(1+\epsilon\right)(k-1)}$ and $H'=2^{k-1}$
\begin{equation}\label{eq-k-1}
E_{\big(1-C 2^{-\frac{\epsilon (k-1)}{2}}\big)2^{k-1}}\subset\Sigma_{k-1} Q_{2^{k-1}}\subset E_{\big(1+C 2^{-\frac{\epsilon (k-1)}{2}}\big)2^{k-1}},
\end{equation}
which immediately implies that
\begin{equation}\label{eq-k-1'}
\Sigma_{k-1}^{-1}E_{\big(1-C 2^{-\frac{\epsilon (k-1)}{2}}\big)2^{k-1}}\subset Q_{2^{k-1}}\subset \Sigma_{k-1}^{-1}E_{\big(1+C 2^{-\frac{\epsilon (k-1)}{2}}\big)2^{k-1}}.
\end{equation}
It follows from \eqref{eq-k} and \eqref{eq-k-1'} that for a larger constant $C$
\begin{equation}\label{primaryversion}
E_{1-C 2^{-\frac{\epsilon k}{2}}}\subset\Sigma_k\Sigma_{k-1}^{-1} E_1\subset E_{1+C 2^{-\frac{\epsilon k}{2}}}.
\end{equation}

\textbf{Step 2: A uniform estiamte for $\left\|\Sigma_k\right\|$. }Applying Lemma 2.1 in \cite{zbw18} to \eqref{primaryversion}, we obtain the estimate
\[
\left\|\Sigma_k\Sigma_{k-1}^{-1}-I_{n+1}\right\|\le C2^{-\frac{\epsilon k}{4}}.
\]
This immediately gives 
\[
\left\|\Sigma_k\Sigma_{k-1}^{-1}\right\|\le 1+C2^{-\frac{\epsilon k}{4}}. 
\]
For $k>\overline{k}+1$, we derive the following bound 
\begin{equation}\label{preestimate}
\left\|\Sigma_k\Sigma_{\overline{k}}^{-1}-\Sigma_{k-1}\Sigma_{\overline{k}}^{-1}\right\|\le C2^{-\frac{\epsilon k}{4}}\prod\limits_{i=\overline{k}+1}^{k-1}\left(1+C2^{-\frac{\epsilon i}{4}}\right)\le C2^{-\frac{\epsilon k}{4}},
\end{equation}
the last inequality of which follows from
\[
\prod\limits_{i=\overline{k}+1}^{k-1}\left(1+C2^{-\frac{\epsilon i}{4}}\right)=e^{\sum\limits_{i=\overline{k}+1}^{k-1}\ln\left(1+C2^{-\frac{\epsilon i}{4}}\right)}\le e^{C\sum\limits_{i=\overline{k}+1}^{k-1}2^{-\frac{\epsilon i}{4}}}\le C.
\]
From Lemma \ref{lem2}, we have the bound for $k=\overline{k}$
\[
\left\|a_{2^{\overline{k}}}\right\|\le C\left(\overline{k}\right).
\]
It follows that 
\[
\sum\limits_{i=0}^{\overline{k}}\left\|\Sigma_{i}\right\|\le C\left(\overline{k}\right)
. 
\]
Combining this and \eqref{preestimate} yields the estimate
\begin{equation}\label{interval}
\left\|\Sigma_k-\Sigma_{k-1}\right\|\le C2^{-\frac{\epsilon k}{4}}.
\end{equation}
Hence there exist an upper-triangular matrix $T$ and $m>0$ such that as $k\rightarrow\infty$
\[
T_k\rightarrow T\quad\text{in}\quad\mathbb{R}^{n\times n},\quad \tau_k\rightarrow \tau \quad\text{in}\quad\left(-\infty,0\right].
\]
This implies that 
\[
\tau\det T^2=1, \quad m_1\le \tau\le m_2
\] 
because 
\[
\tau_k\det T_k^2=1,\quad m_1\le \tau_k\le m_2. 
\]
Define 
\[
\Sigma:=\begin{pmatrix}T &0\\0'&m\end{pmatrix}.
\]
It follows from \eqref{interval} that
\[
\left\|\Sigma_k-\Sigma\right\|\le C2^{-\frac{\epsilon k}{4}}. 
\]
This implies the uniform bound
\begin{equation}\label{T_k}
\left\|\Sigma_k\right\|+\left\|\Sigma_k^{-1}\right\|\le C 
\end{equation}
for a constant $C$ independent of $k$. 

\textbf{Step 3: A refined level set estimate.} It follows from \eqref{eq008} that 
\begin{equation}\label{roughinclusion}
\Sigma_k Q_{H'}\subset E_{CH'}.
\end{equation} 
For every $(x,t)\in Q_{H'}$ with $H'\in\left[2^{k-1},2^k\right]$, the following estimate holds
\[
\left|\Sigma(x,t)-\Sigma_k(x,t)\right|_p\le \left\| \Sigma- \Sigma_k \right\| \left\|\Sigma_k^{-1}\right\| \left|\Sigma_k(x,t)\right|_p\le CH'^{\frac{1}{2}}2^{-\frac{\epsilon k}{4}},
\]
where \eqref{roughinclusion} is employed in the last inequality. By combining this with \eqref{eq008} and possibly enlarging $C$, we establish the refined containment
\begin{equation}\label{eq009}
E_{\big(1-CH'^{-\frac{\epsilon}{4}}\big)H'}\subset Q_{H'}(w)=\Sigma Q_{H'}\subset E_{\big(1+CH'^{-\frac{\epsilon}{4}}\big)H'},
\end{equation}
where $w:=u\circ \Sigma^{-1}$ and $Q_{H'}(w):=\left\{(y,s)\in\mathbb{R}^{n+1}_-:w(y,s)<H'\right\}$. 

\textbf{Step 4: Conversion to the solution-estimate version. }By \eqref{eq009}, there exists $\overline{H}$, such that for $H'\ge\overline{H}$ and $(y,s)\in \partial_p\left(Q_{H'}(w)\right)$
\begin{equation}\label{onehand}
\frac{1}{2}|y|^2-s\ge\left(1-CH'^{-\frac{\epsilon}{4}}\right)H'\ge\frac{H'}{2},
\end{equation}
which implies that
\begin{equation}\label{Hover2}
|y|^2-s\ge\frac{H'}{2}. 
\end{equation}
Consequently, we derive from \eqref{onehand} the upper bound 
\[
\begin{aligned}
w(y,s)=H'&\le \frac{1}{2}|y|^2-s + CH'^{1-\frac{\epsilon}{4}}\\
&\le \frac{1}{2}|y|^2-s + C\left(|y|^2-s\right)^{1-\frac{\epsilon}{4}}.
\end{aligned}
\]
On the other hand for $H'\ge\overline{H}$ and $(y,s)\in \partial_p\left(Q_{H'}(w)\right)$
\begin{equation}\label{otherhand}
\frac{1}{2}|y|^2-s\le \left(1+CH'^{-\frac{\epsilon}{4}}\right)H'\le \frac{3H'}{2}.
\end{equation}
Using \eqref{Hover2} and \eqref{otherhand}, we obtain the complementary lower bound
\[
\begin{aligned}
w(y,s)=H'&\ge\frac{1}{2}|y|^2-s-CH'^{1-\frac{\epsilon}{4}}\\ &\ge\frac{1}{2}|y|^2-s - C\left(|y|^2-s\right)^{1-\frac{\epsilon}{4}}.
\end{aligned}
\]
\eqref{otherhand} also yields that 
\[
|y|^2-s\le 3H'
\] 
and furthermore
\[
Q_{\overline{H}}(w)\subset\left\{(y,s)\in\mathbb{R}^{n+1}_-:|y|^2-s\le 3\overline{H}\right\}. 
\]
This implies that
\[
\left\{(y,s)\in\mathbb{R}^{n+1}_-:|y|^2-s> 3\overline{H}\right\}\subset\bigcup\limits_{H\ge\overline{H}}\partial_p\left(Q_{H'}(w)\right).
\]
As a result, we obtain that for $|y|^2-s> 3\overline{H}$
\[
\left|w(y,s)-\frac{1}{2}|y|^2+s\right|\le C\left(|y|^2-s\right)^{1-\frac{\epsilon}{4}}.
\]
\end{proof}

According to \eqref{T_k}, we establish the following uniform bound for every $i\in\mathbb{N}$
\begin{equation}\label{a_H}
\left\|a_{2^{\left(1+\epsilon\right)k}}\right\|+\left\|a_{2^{\left(1+\epsilon\right)k}}^{-1}\right\|\le C,
\end{equation}
where $C$ is a positive constant depending only on $n,m_1,m_2,\lambda,\Lambda,\epsilon$, $\sup\limits_{B_1}u(\cdot,0)$.

From Proposition \ref{prop0}, we derive the asymptotic behavior of $u$. There exists a constant $C>0$ depending only on $n,m_1,m_2,\lambda,\Lambda,\epsilon$, $\sup\limits_{B_1}u(\cdot,0)$, such that
\begin{equation}\label{asymptotic}
\left| u(x,t)-\frac{1}{2}x'Ax+\tau t \right|\le C\left( |x|^{2}-t \right)^{\frac{4-\epsilon}{4}}\quad \text{for}\quad |x|^2-t\ge C,
\end{equation}
where $A:=T'T$.

\quad

\section{Positive upper and lower bounds of $D_x^2u$}\label{3'}
We introduce the set
\[
E:=\left\{k_1e_1+\cdots+k_ne_n:k_1,\cdots,k_n\in\mathbb{Z},k_1^2+\cdots+k_n^2>0\right\}.
\]
For any $(x,t)\in\mathbb{R}^{n+1}_-$ and $e\in E$, we define the second order diference quotient
\[
\Delta_e^2u(x,t):=\frac{u(x+e,t)+u(x-e,t)-2u(x,t)}{|e|^2}.
\]
Define the function $F\left(a,M\right):=\log a+\log\det M$ for $a>0$ and $n\times n$ symmetric positive definite matrix $M$. The concavity of $F$ yields that $\Delta_e^2u$ is a subsolution.

\begin{lemma}\label{concave}
For any $e\in E$, $\Delta_e^2u$ satisfies
\[
\frac{1}{u_t}D_t\Delta_e^2u+u^{ij}D_{x_ix_j}\Delta_e^2u\ge0.
\]
\end{lemma}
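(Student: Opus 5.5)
The plan is a pointwise argument that uses only the concavity of $F$ and the spatial periodicity of $f$; it does not differentiate the equation. Because $f$ is only H\"older, I will not use the linearized equation for third derivatives.

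Fix $e\in E$ and a point $(x,t)\in\mathbb{R}^{n+1}_-$. Since $e=k_1e_1+\cdots+k_ne_n$ with $k_i\in\mathbb{Z}$, the normalization $a_i=1$ and \eqref{f1f2} give $f(x\pm e,t)=f_1(x\pm e)f_2(t)=f(x,t)$.

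First I check that $F$ is defined at the relevant points. Since $-u_t\ge m_1>0$ and $\det D_x^2u=f/(-u_t)>0$, and $u$ is convex in $x$, the matrix $D_x^2u$ is positive definite everywhere. Set
\[
P_0:=\bigl(-u_t(x,t),D_x^2u(x,t)\bigr),\qquad P_\pm:=\bigl(-u_t(x\pm e,t),D_x^2u(x\pm e,t)\bigr).
\]
All three lie in the domain of $F$. By \eqref{pma1} and periodicity,
\[
F(P_\pm)=\log f(x\pm e,t)=\log f(x,t)=F(P_0).
\]

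Next I use concavity. The map $a\mapsto\log a$ is concave on $(0,\infty)$ and $M\mapsto\log\det M$ is concave on positive definite matrices, so $F$ is concave. Its differential at $P_0$ acts on $(b,N)$ by
\[
DF(P_0)[(b,N)]=\frac{b}{-u_t}+u^{ij}N_{ij}.
\]
The tangent-plane inequality $F(P_\pm)\le F(P_0)+DF(P_0)[P_\pm-P_0]$ then gives $0\le DF(P_0)[P_\pm-P_0]$ for both signs. Adding the two inequalities yields $0\le DF(P_0)[P_++P_--2P_0]$.

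Now I identify the right-hand side. The first component of $P_++P_--2P_0$ is $-|e|^2D_t\Delta_e^2u(x,t)$, and the matrix component is $|e|^2D_x^2\Delta_e^2u(x,t)$. Here I use that $u\in C^{2,1}$, so $D_t$ and $D^2_x$ of $\Delta_e^2u$ exist and are computed termwise from the translates. Substituting,
\[
0\le\frac{-|e|^2D_t\Delta_e^2u}{-u_t}+|e|^2u^{ij}D_{x_ix_j}\Delta_e^2u .
\]
Dividing by $|e|^2$ gives the claimed inequality $\frac{1}{u_t}D_t\Delta_e^2u+u^{ij}D_{x_ix_j}\Delta_e^2u\ge0$.

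There is no serious obstacle. The only points needing care are the positivity of $-u_t$ and of $D_x^2u$, so that $F$ is defined and differentiable at $P_0$ and $P_\pm$, and the sign bookkeeping in the time component, where $a=-u_t$.
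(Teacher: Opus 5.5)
Your proof is correct and is essentially the paper's argument. The paper first applies midpoint concavity of $F$ to the averaged function $w=\tfrac12[u(x+e,t)+u(x-e,t)]$ and then the tangent-plane inequality at $P_0$, whereas you apply the tangent-plane inequality directly to $P_+$ and $P_-$ and add, which gives the same inequality in one fewer step; your sign bookkeeping in the time component and your use of $f(x\pm e,t)=f(x,t)$ are both right.
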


\begin{proof}
Let $w(x,t):=\frac{1}{2}\left[u(x+e,t)+u(x-e,t)\right]$. By the concavity of $F$
\[
\begin{aligned}
F\left( -w_t,D_x^2w \right)
&\ge\frac{1}{2} \left[ F\left(-u_t(x+e,t),D_x^2u(x+e,t)\right) + F\left(-u_t(x-e,t),D_x^2u(x-e,t)\right) \right]\\
&=\frac{1}{2}\left[\log f(x+e,t)+\log f(x-e,t)\right]=\log f(x,t).
\end{aligned}
\]
On the other hand, we have from the concavity of $F$ that
\[
\begin{aligned}
F\left(-w_t,D_x^2w\right)&\le F\left(-u_t,D_x^2u\right)-\frac{\partial F}{\partial a}\left(-u_t,D_x^2u\right)(w-u)_t\\
&\quad+\frac{\partial F}{\partial M_{ij}}\left(-u_t,D_x^2u\right)D_{ij}(w-u)\\
&=\log f(x,t)+\left(\frac{1}{u_t}D_t\Delta_e^2u+u^{ij}D_{x_ix_j}\Delta_e^2u\right)\frac{|e|^2}{2}.
\end{aligned}
\]
Combining the two inequalities above, we get the result.
\end{proof}

We recall a calculas lemma.

\begin{lemma}\label{a.1}[Lemma A.1 in \cite{cl04}]
Let $g\in C^2(-1,1)$ be a strictly convex function, and let $0<|h|\le\epsilon$. Then
\[
\Delta_h^2g(x)>0,\quad\text{for}\quad |x|\le 1-2\epsilon
\]
and
\[
\int_{-1+2\epsilon}^{1-2\epsilon}\Delta_h^2g\le \frac{C}{\epsilon}\mathop{\text{osc}}\limits_{(-1.1)}g,
\]
where $C$ is some universal constant and $\mathop{\text{osc}}\limits_{(-1.1)}g:=\sup\limits_{-1<s<t<1}|g(s)-g(t)|$. 
\end{lemma}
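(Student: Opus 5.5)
Both assertions reduce to elementary facts about convex functions of one variable, so the plan is a direct computation with Taylor's formula and Fubini; no earlier result of the paper is needed.

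\emph{Positivity.} For $|x|\le 1-2\epsilon$ and $0<|h|\le\epsilon$, the points $x\pm h$ lie in $[-1+\epsilon,1-\epsilon]\subset(-1,1)$. Taylor's formula with integral remainder gives
\[
\Delta_h^2g(x)=\int_{-1}^{1}(1-|\theta|)\,g''(x+\theta h)\,d\theta .
\]
For $g\in C^2$ strictly convex we have $g''\ge0$, and $g''$ cannot vanish on a whole interval, because $g$ would then be affine there. So the integrand is nonnegative and positive on a set of positive measure, hence $\Delta_h^2g(x)>0$. (Equivalently, the chord of $g$ over $[x-h,x+h]$ lies strictly above $g(x)$.)

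\emph{Integral bound.} Integrate the same representation over $x\in I:=[-1+2\epsilon,1-2\epsilon]$ and use Fubini:
\[
\int_I\Delta_h^2g\,dx=\int_{-1}^1(1-|\theta|)\Big(\int_I g''(x+\theta h)\,dx\Big)d\theta .
\]
The inner integral equals $g'(1-2\epsilon+\theta h)-g'(-1+2\epsilon+\theta h)$. This is at most $g'(1-\epsilon)-g'(-1+\epsilon)$, since $g'$ is nondecreasing and $|\theta h|\le\epsilon$. Because $\int_{-1}^1(1-|\theta|)\,d\theta=1$, it follows that
\[
\int_I\Delta_h^2g\le g'(1-\epsilon)-g'(-1+\epsilon).
\]

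\emph{Gradient control near the ends.} This is the only step needing a little care: we must bound $g'$ at points $\epsilon$ away from the endpoints by the oscillation. Monotonicity of $g'$ together with the mean value theorem gives
\[
g'(1-\epsilon)\le\frac{g(s)-g(1-\epsilon)}{s-(1-\epsilon)}\le\frac{\mathop{\text{osc}}_{(-1,1)}g}{s-(1-\epsilon)}
\]
for every $s\in(1-\epsilon,1)$. Letting $s\to1$ yields $g'(1-\epsilon)\le\epsilon^{-1}\mathop{\text{osc}}g$. Symmetrically, $-g'(-1+\epsilon)\le\epsilon^{-1}\mathop{\text{osc}}g$. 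Hence
\[
\int_{-1+2\epsilon}^{1-2\epsilon}\Delta_h^2g\le\frac{2}{\epsilon}\mathop{\text{osc}}_{(-1,1)}g,
\]
which is the claim with $C=2$.

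The main point to watch is that the shifts $x+\theta h$ stay inside a region where $g'$ is controlled. This is exactly why the integration interval is shrunk by $2\epsilon$ while $|h|\le\epsilon$: the shifted points stay in $[-1+\epsilon,1-\epsilon]$, which leaves a margin $\epsilon$ for the difference-quotient bound on $g'$. If $g'$ is unbounded near $\pm1$ this causes no problem, since only its values at $\pm(1-\epsilon)$ enter the estimate.
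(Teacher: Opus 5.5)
The paper gives no proof of this lemma. It is quoted from Caffarelli--Li \cite{cl04} and used only as a tool in Lemma \ref{bdd1}, so there is no in-paper argument to compare yours against.

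Your argument is correct and complete on its own terms:
\begin{itemize}
\item The representation $\Delta_h^2g(x)=\int_{-1}^{1}(1-|\theta|)\,g''(x+\theta h)\,d\theta$ is right.
\item Fubini is legitimate, because the integrand is continuous on the compact set $[-1+2\epsilon,1-2\epsilon]\times[-1,1]$: every argument $x+\theta h$ lies in $[-1+\epsilon,1-\epsilon]$, where $g''$ is continuous.
\item Monotonicity of $g'$ correctly reduces the integral to at most $g'(1-\epsilon)-g'(-1+\epsilon)$.
\item The one-sided difference-quotient argument bounds both $g'(1-\epsilon)$ and $-g'(-1+\epsilon)$ by $\epsilon^{-1}\mathop{\text{osc}}\limits_{(-1,1)}g$. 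This gives the claim with $C=2$.
\end{itemize}

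Two small remarks. First, you tacitly assume $\epsilon\le\frac12$, so that $[-1+2\epsilon,1-2\epsilon]$ is a genuine interval on which $\Delta_h^2g$ is defined. The statement itself implicitly requires this, but it is worth saying. Second, the integral bound uses only convexity ($g''\ge0$). Strict convexity is needed only for the positivity claim, and that follows directly from the strict chord inequality, as you note.
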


Building upon Lemma \ref{concave}, the local maximum principle and the preceding lemma, we establish the following local boundedness of $\Delta_{\tilde{e}}^2u_H$.

\begin{lemma}\label{bdd1}
For $r>0$ and $e\in E$, there exist constants $C,H_0$ depending only on $n,\lambda,\Lambda,m_1,m_2$, $r,|e|$, such that for all $H\ge H_0$, the following estimates hold
\[
\int_{\left\{Y\in Q_H^*:\text{dist}_p\left(Y,\partial_pQ_H^*\right)>\frac{r}{2}\right\}}\Delta_{\tilde{e}}^2u_H\le C,
\]
and 
\[
0<\Delta_{\tilde{e}}^2u_H\le C
\]
for every $Y\in Q_H^*$ satisfying $\text{dist}_p\left(Y,\partial_pQ_H^*\right)>r$, where $\tilde{e}$ is defined by \eqref{tildee}.
\end{lemma}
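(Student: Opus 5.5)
The plan follows Caffarelli--Li's elliptic argument, transplanted to the parabolic setting. It has two parts: an integral bound, obtained by integrating along lines with Lemma \ref{a.1}, and a pointwise bound, obtained from the subsolution property of Lemma \ref{concave} via a local maximum principle.

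\textbf{Step 1: uniform geometry.} Lemma \ref{normalization} gives $B_{\epsilon_0'}\times(-\epsilon_1',0]\subset Q_H^*\subset B_2\times(-\epsilon_2',0]$. Lemma \ref{cp'} gives $|u_H-\overline{\xi}|\le CR^{-\gamma}$. Since $u_H=H/R^2\le C$ on $\partial_pQ_H^*$ and $u_H\ge 0$ (normalization $u(0,0)=0$, $D_xu(0,0)=0$, convexity, monotonicity in $t$), we get $\operatorname{osc}_{Q_H^*}u_H\le C$ uniformly in $H$.

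\textbf{Step 2: positivity.} By \eqref{a_H} and $R\to\infty$, we have $|\tilde e|\le C|e|/R$, which is small once $H\ge H_0$. For $Y=(y,s)$ with $\operatorname{dist}_p(Y,\partial_pQ_H^*)>r/2$, the segment $y\pm\tilde e$ then stays inside $Q_H^*(s)$. Strict convexity of $u_H(\cdot,s)$ (from $\det D^2_yu_H\ge\lambda/m_2>0$ and $C^{2}$ regularity) gives $\Delta^2_{\tilde e}u_H(Y)>0$.

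\textbf{Step 3: integral bound.} Fix a time slice $s$ and the direction $\tilde e/|\tilde e|$. Each line in that direction meets the convex set $\{y:\operatorname{dist}_p((y,s),\partial_pQ_H^*)>r/4\}$ in a chord of length at most $4$. Rescale each such chord to $(-1,1)$ and apply Lemma \ref{a.1} with $h=|\tilde e|$ and $\epsilon\sim r$. This bounds the line integral of $\Delta^2_{\tilde e}u_H$ over the portion of the chord at distance $>r/2$ by $C r^{-1}\operatorname{osc} u_H\le C$; the rescaling factors are controlled since chord lengths are bounded above and, on the region considered, below. Integrating over the transversal directions and over $s\in(-\epsilon_2',0]$, all ranges bounded by Step 1, gives the first estimate $\int \Delta_{\tilde e}^2u_H\le C$.

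\textbf{Step 4: pointwise bound.} By Lemma \ref{concave}, rescaled, $\Delta^2_{\tilde e}u_H$ is a nonnegative subsolution of $L=\frac{1}{u_{H,s}}D_s+u_H^{ij}D_{ij}$. Then I invoke the local maximum principle for parabolic nondivergence equations (Krylov--Safonov/Wang type) on $E_{r/2}(Y)$. It bounds $\sup_{E_{r/4}(Y)}$ by $C$ times the $L^1$ norm over $E_{r/2}(Y)$, the latter controlled by Step 3. This needs $L$ uniformly parabolic near $Y$, i.e. $C^{-1}I\le D^2_yu_H\le CI$ at distance $>r/2$ from the boundary, plus $m_1\le -u_{H,s}\le m_2$.

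\textbf{Main obstacle.} The Hessian bounds for $u_H$ itself are the hard part, since Lemma \ref{lode} applies only to $\overline\xi$, where $f=1$. My first attempt is a Pogorelov-type estimate for $u_H$ directly, using the auxiliary function of Lemma \ref{lode} with $f=f_1f_2$. This would need derivatives of $f$, which are unavailable since $f$ is only H\"older, so I would instead rely on Caffarelli's interior $C^{2,\alpha}$ theory for the parabolic Monge--Ampère equation with H\"older data. That theory applies on sections whose geometry is uniform, as Lemma \ref{normalization} guarantees; one subtlety is that the H\"older seminorm of $f\circ\Gamma_H^{-1}$ grows with $R$, so this needs care. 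If this fails, the fallback is an approach avoiding uniform parabolicity: Lemma \ref{cp'} gives closeness of $u_H$ to the smooth $\overline\xi$, and a comparison/perturbation argument using $\overline\xi$'s Hessian bounds would then transfer the estimates.
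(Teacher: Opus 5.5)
Your Steps 1--3 (positivity from strict convexity, smallness of $\tilde e$, Lemma \ref{a.1} on chords parallel to $\tilde e$, then integration over lines and time) are essentially the paper's argument. The gap is in Step 4.

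A Krylov--Safonov local maximum principle needs $L=\frac{1}{u_{H,s}}D_s+u_H^{ij}D_{y_iy_j}$ to be uniformly parabolic, i.e.\ two-sided bounds on $D_y^2u_H$ that are uniform in $H$. But $D_y^2u_H(y,s)=(a_H^{-1})'D_x^2u(Ra_H^{-1}y,R^2s)\,a_H^{-1}$ with $\|a_H\|+\|a_H^{-1}\|\le C$ by \eqref{a_H}. So the bound you need is exactly Proposition \ref{bdd}, which the paper derives \emph{from} this lemma, through Lemma \ref{bdd2} and Lemma \ref{cl03}.

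Neither of your proposed ways out breaks this circle:
\begin{itemize}
\item Caffarelli-type $C^{2,\alpha}$ estimates on $Q_H^*$ see the right-hand side $f_1(Ra_H^{-1}y)f_2(R^2s)$, whose $C^{\alpha,\alpha/2}$ seminorm is of order $R^{\alpha}$, so they degenerate as $H\to\infty$. Undoing the scaling, you would need uniform control of the shape of unit-height sections of $u$ at every point. That is precisely what Step 1 of Proposition \ref{bdd} extracts from $\Delta_e^2u\le\gamma$, i.e.\ from the present lemma.
\item The fallback also fails. $\|u_H-\overline\xi\|_{C^0}\le CR^{-\gamma}$ carries no second-order information, and $D_y^2u_H$ is genuinely not close to $D_y^2\overline\xi$. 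Indeed, $u_H$ contains a corrector of the form $R^{-2}\xi_1(Ra_H^{-1}y)$, whose Hessian is of order one and oscillates on scale $R^{-1}$.
\end{itemize}

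The missing ingredient is a local maximum principle that does not use uniform parabolicity at all. The linearized parabolic Monge--Amp\`ere theory (Huang's parabolic version of Caffarelli--Guti\'errez) provides one. It bounds a nonnegative subsolution of $L$ on a section of $u_H$ by its integral average over a larger section. Its constants depend only on $n$, the structural bounds $\lambda\le -u_{H,s}\det D_y^2u_H\le\Lambda$ and $m_1\le -u_{H,s}\le m_2$, and the normalization of the sections, not on the eigenvalues of $D_y^2u_H$. This is the role of Theorem 1.11 of \cite{zb18} in the paper's proof.

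For $H$ large, sections of $u_H$ of a fixed small height centred at $Y$ with $\operatorname{dist}_p(Y,\partial_pQ_H^*)>r$ are uniformly normalized and lie in $\{\operatorname{dist}_p>r/2\}$; this follows by comparing with the smooth $\overline\xi$ via Lemma \ref{cp'}. Your $L^1$ bound from Step 3 then plugs directly into this principle. This is the point of the Caffarelli--Li strategy: uniform Hessian bounds are an output of the difference-quotient estimate, not an input to it.
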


\begin{proof}
It follows from the strict convexity of $u$ that $\Delta_{\tilde{e}}^2u_H>0$. We have that $|\tilde{e}|\le CR^{-\alpha}|e|\rightarrow0$ as $H\rightarrow\infty$. Thus there exists $H_0$ such that for $H>H_0$, $|\tilde{e}|\le\frac{r}{4}$. Let $L$ be a line parallel to $\tilde{e}$, then we have by Lemma \ref{a.1} that
\[
\int_{L\cap \left\{ Y\in Q_H^*:\text{dist}_p\left(Y,\partial_pQ_H^*\right)>\frac{r}{2} \right\} }\Delta_{\tilde{e}}^2u_H dL\le C,
\]
where the constant $C$ is independent of $H$. Integrating the above over all such lines and $t$ variable, we can obtain that
\[
\int_{\left\{Y\in Q_H^*:\text{dist}_p\left(Y,\partial_pQ_H^*\right)>\frac{r}{2}\right\}}\Delta_{\tilde{e}}^2u_H dY\le C.
\]

By Lemma \ref{concave}, we know that
\[
\left(D_s u_H\right)^{-1}D_s\Delta_{\tilde{e}}^2u_H+u_H^{ij}D_{y_iy_j}\Delta_{\tilde{e}}^2u_H\ge0.
\]
Theorem 1.11 in \cite{zb18} gives that $\Delta_{\tilde{e}}^2u_H\left(Y\right)\le C$ for $\text{dist}_p\left(Y,\partial_pQ_H^*\right)>r$.
\end{proof}

The boundedness of $\Delta_e^2u$ in $\mathbb{R}^{n+1}_-$ then follows.

\begin{lemma}\label{bdd2}
$\gamma:=\sup\limits_{e\in E,X\in\mathbb{R}^{n+1}_-}\Delta_e^2u\left(X\right)<\infty$.
\end{lemma}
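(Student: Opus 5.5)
The strategy follows Caffarelli--Li: periodicity moves every point near a fixed base point, and then Lemma \ref{bdd1} bounds the difference quotient there by rescaling at a single large level $H\ge H_0$.

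Fix $e\in E$ and $X=(x,t)\in\mathbb{R}^{n+1}_-$. Write $x=z+\sum_i k_ie_i$ with $k_i\in\mathbb{Z}$ and $z\in[0,1]^n$, and write $t=s-l$ with $l\in\mathbb{N}$ and $s\in[-1,0]$. Set
\[
\hat u(y,\sigma):=u\Big(y+\sum_i k_ie_i,\sigma-l\Big)-u\Big(\sum_i k_ie_i,-l\Big)-D_xu\Big(\sum_i k_ie_i,-l\Big)\cdot y .
\]
By \eqref{f1f2}, $\hat u$ again solves \eqref{pma1} with the same $f$, satisfies \eqref{u_t}, and is normalized by $\hat u(0_n,0)=0$ and $D_x\hat u(0_n,0)=0_n$. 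Moreover $\Delta_e^2\hat u(z,s)=\Delta_e^2u(X)$, since subtracting an affine function in $y$ does not change second differences. It therefore suffices to bound $\Delta_e^2\hat u$ on the compact set $K:=[0,1]^n\times[-1,0]$, uniformly over all such normalizations. We must also show the bound is uniform in $e\in E$.

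All the constants in Section \ref{3} and Lemma \ref{bdd1} depend only on $n,\lambda,\Lambda,m_1,m_2$, plus $r$ and $|e|$. The one exception is $\sup_{B_1}\hat u(\cdot,0)$, which enters only through Lemma \ref{lem2} and \eqref{a_H}; Lemma \ref{bdd1} itself does not use it. So I fix $H$ large, apply Lemma \ref{bdd1} to $\hat u$, and transfer the bound back. Under $\Gamma_H$ the quotients are related by
\[
\Delta_{\tilde e}^2\hat u_H(\Gamma_H Y)=\frac{|e|^2}{R^2|\tilde e|^2}\,\Delta_e^2\hat u(Y),\qquad \tilde e=\frac{a_He}{R},
\]
so that
\[
\Delta_e^2\hat u(Y)=|e|^{-2}\,|a_He|^2\,\Delta_{\tilde e}^2\hat u_H(\Gamma_HY).
\]
To use this we need three facts.
\begin{itemize}
\item $\Gamma_H(K)$ lies at parabolic distance at least some $r>0$ from $\partial_pQ_H^*$. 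This holds because $\hat u\le C$ on $K$, which follows from \eqref{u_t} and a gradient bound, and $H\gg C$; then apply \eqref{lower} together with Lemma \ref{normalization}.
\item $\|a_H\|\le C$. This is \eqref{a_H}, or Lemma \ref{lem2}, once $\sup_{B_1}\hat u(\cdot,0)$ is controlled uniformly in the translation.
\item The dependence on $|e|$ is handled separately, as explained below.
\end{itemize}

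\textbf{Uniform control of $\sup_{B_1}\hat u(\cdot,0)$.} This is the main obstacle. I would get it from the Caffarelli--Li idea. The function $u(\cdot+\sum_i k_ie_i,-l)-u(\cdot,-l)$ has second differences in lattice directions that are bounded on average; more simply, one can use Proposition \ref{prop0} directly. By \eqref{asymptotic}, $u$ equals the quadratic $-\tau t+\frac12x'Ax+b\cdot x$ plus an error of order $(|x|^2-t)^{1-\epsilon/4}$. Subtracting the tangent plane at a far point then leaves a function bounded on $B_1$ at scale $\rho=(|x|^2-t)^{1/2}$, with error $O(\rho^{2-\epsilon/2})$, which is not uniformly small. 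So I would instead run Lemma \ref{bdd1} at scale $H$ comparable to $\rho^2$ around the point itself. For $\hat u$, the normalized level sets $Q_H^*$ satisfy Lemma \ref{normalization} with universal constants. What is needed is that $a_H$ stays bounded for $H$ comparable to $\rho^2$. This follows from \eqref{eq009}: $Q_H(u)$ is close to an ellipsoid $\Sigma^{-1}E_H$ with relative error $H^{-\epsilon/4}$. The sections of $\hat u$ at height $\sim\rho^2$ are then sections of $u$ at height $\sim\rho^2$ cut by a tangent plane, so they are comparable to $\Sigma^{-1}$-ellipsoids, which gives $\|a_H\|+\|a_H^{-1}\|\le C$ uniformly.

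\textbf{Uniformity in $e$.} For general $e\in E$, convexity along the line through $x$ in direction $e$ together with the above gives a bound for $|e|\le 1$-type lattice vectors. For large $|e|$, write $e=me'$ with $e'$ primitive. Then $\Delta_{e}^2u$ is a weighted average of $\Delta_{e'}^2u$ along the segment, by the telescoping identity $\Delta^2_{me'}=m^{-2}\sum_{j}(m-|j|)\,\delta_{e'}^2(\cdot+je')$. This reduces the problem to finitely many directions only if we also bound $\Delta_{e'}^2u$ for long primitive $e'$. Alternatively, the far-field asymptotic \eqref{asymptotic} makes $\Delta^2_e u$ close to $e'Ae/|e|^2$ when $|e|$ is large relative to $1$, and that quantity is bounded.
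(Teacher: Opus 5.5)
\textbf{There is a genuine gap: the reduction to the unit cell makes the argument circular, and your handling of uniformity in $e$ is wrong.}

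Once you translate to $\hat u$, you need two things uniformly over all lattice translates: $\|a_H(\hat u)\|\le C$, and $\hat u\le C$ on $K$. Your justification for the latter (``from \eqref{u_t} and a gradient bound'') needs a translation-uniform modulus of continuity for $D_xu$ at unit scale. That is essentially the statement you are proving, so the fixed-$H$ argument is circular.

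The repair at scale $H\sim\rho^2$ is only asserted. \eqref{eq009} describes sublevel sets of $u$ normalized at the origin, not sections of $u$ cut by the tangent plane at a far point. To get those you would need \eqref{asymptotic} together with a convexity estimate of the type $D_xu(x_0,t_0)=Ax_0+O(\rho^{1-\frac{\epsilon}{4}})$, and you do not supply it.

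On uniformity in $e$: the telescoping reduction does not close, as you note yourself. The proposed alternative is false. \eqref{asymptotic} only yields $\Delta_e^2u(X)=\frac{e'Ae}{|e|^2}+O\big((|x|^2+|e|^2-t)^{1-\frac{\epsilon}{4}}|e|^{-2}\big)$. This error is small only when $|e|^2$ dominates $(|x|^2-t)^{1-\frac{\epsilon}{4}}$, that is, when $|e|$ is large relative to the distance of $X$ from the origin, not relative to $1$.

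\textbf{The fix: do not translate.} The paper keeps $u$ normalized at the origin and uses the sections $Q_H$ with $H=2^{(1+\epsilon)k}$, which exhaust $\mathbb{R}^{n+1}_-$.
\begin{enumerate}
\item Given $X$ and $e$, take $k$ so large that $X\in Q_{H/2}$ and $H\ge H_0$.
\item By Lemma \ref{hr}, $u_H(\Gamma_HX)-H/R^2<-H/(2R^2)\le -C^{-1}$.
\item Apply \eqref{lower} to $u_H-H/R^2$ on $Q_H^*$. This gives $\text{dist}_p(\Gamma_HX,\partial_pQ_H^*)\ge c$ with $c$ independent of $H$.
\item Lemma \ref{bdd1} with $r=c$ then bounds $\Delta_{\tilde e}^2u_H(\Gamma_HX)$. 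Hence $\Delta_e^2u(X)=|e|^{-2}|a_He|^2\,\Delta_{\tilde e}^2u_H(\Gamma_HX)\le C\|a_H\|^2$, and \eqref{a_H} bounds this uniformly in $k$.
\end{enumerate}
The dependence of \eqref{a_H} on $\sup_{B_1}u(\cdot,0)$ is harmless, because $u$ is one fixed function and no uniformity over translates is needed.

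Uniformity in $e$ follows from something you missed. In the proof of Lemma \ref{bdd1}, the bound depends only on $r$, through Lemma \ref{a.1} and the local maximum principle. The quantity $|e|$ enters only through the threshold $H_0$ that guarantees $|\tilde e|\le r/4$. Since the final bound $C\|a_H\|^2$ does not depend on $H$, you may take $H$ as large as $|e|$ requires.
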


\begin{proof}
For $e\in E$ and $X\in\mathbb{R}^{n+1}_-$, let $Y:=\Gamma_HX$. We take $H=2^{\left(1+\epsilon\right)k}$ large so that $X\in Q_{\frac{H}{2}}$. Applying Lemma \ref{lemma} and \ref{hr} to equation 
\[
-u_{H,s} \operatorname{det} D_y^{2} u_H=f\circ\Gamma_{H}^{-1},
\]
we have that 
\[
\text{dist}_p\left(Y,\partial_pQ_H^*\right)\ge C>0,
\]
where $C$ is independent of $H$. Therefore we attain that
\[
\begin{aligned}
\Delta_e^2u(x,t)&=\frac{u(x+e,t)+u(x-e,t)-2u(x,t)}{|e|^2}\\
&=\frac{|a_He|^2}{|e|^2}\Delta_{\tilde{e}}^2u_H\le C_1\|a_H\|^2\le C_2,
\end{aligned}
\]
where we have employed Lemma \ref{bdd1} in the penultimate inequality and \eqref{a_H} in the last one.
\end{proof}

To establish positive upper and lower bounds of $D_x^2u$, we need the following estimate.

\begin{lemma}\label{cl03}[Lemma 2.9 in \cite{cl03}]
Let $n\ge2$, $\lambda>0$, $r\ge2$, and $u$ be a $C^2$ function in $(-3,3)^{n-1}\times(-r,r)$ satisfying
\[
D^2_xu>0,\quad \det D^2_xu\ge\lambda,\quad\text{in}\quad (-3,3)^{n-1}\times(-r,r)
\]
and
\[
0\le u\le1 \quad\text{in}\quad(-2,2)^{n-1}.
\]
Then for some positive constant $C=C(n)>0$,
\[
\max\limits_{|s|\le r}u\left(0_{n-1},s\right)^n\ge \left( \frac{r\lambda}{C}-1 \right).
\]
\end{lemma}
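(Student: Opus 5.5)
The plan is to compare the measure of the gradient image of $u$ over a sub-cylinder with the upper bound forced by convexity. Throughout I read the hypothesis $0\le u\le 1$ as holding on the slice $(-2,2)^{n-1}\times\{0\}$. Write $x=(x',s)$ and set
\[
M:=\max_{|s|\le r}u(0_{n-1},s).
\]
The argument only uses convexity of $u$ together with $\det D_x^2u\ge\lambda$. It has three steps: an upper bound for $u$, then a lower bound, then a gradient count.

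\textbf{Upper bound on a fat cylinder.} For $|x'|_\infty<1$ and $|s|<r/2$, write $(x',s)=\frac12(0_{n-1},2s)+\frac12(2x',0)$. Convexity, $u(0_{n-1},2s)\le M$ and $u(2x',0)\le1$ give
\[
u\le \tfrac{M+1}{2}\quad\text{on}\quad \Omega_1:=(-1,1)^{n-1}\times(-r/2,r/2).
\]

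\textbf{Lower bound.} For $(x',s)\in\Omega_1$, the point $(x',0)$ is the midpoint of $(x',s)$ and $(x',-s)$. Since $u(x',0)\ge0$, this gives
\[
u(x',s)\ge 2u(x',0)-u(x',-s)\ge -\tfrac{M+1}{2}.
\]
Hence $\operatorname{osc}_{\Omega_1}u\le M+1$.

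\textbf{Gradient count.} Let $\Omega_2:=(-\frac12,\frac12)^{n-1}\times(-r/4,r/4)$. For $x\in\Omega_2$, the monotonicity of one-dimensional difference quotients of a convex function gives the following. Each horizontal derivative satisfies $|\partial_iu(x)|\le 2\operatorname{osc}_{\Omega_1}u\le 2(M+1)$ for $i\le n-1$, because $\Omega_1$ contains a horizontal neighbourhood of width $1/2$ around $\Omega_2$. The vertical derivative satisfies $|\partial_nu(x)|\le \frac{4}{r}(M+1)$, because $\Omega_1$ contains a vertical neighbourhood of length $r/4$ around $\Omega_2$. So $D_xu(\Omega_2)$ lies in a box of volume at most $C(n)(M+1)^n/r$. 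On the other hand, the area formula together with $\det D_x^2u\ge\lambda$ gives
\[
|D_xu(\Omega_2)|=\int_{\Omega_2}\det D_x^2u\ge\lambda|\Omega_2|=\tfrac{\lambda r}{2}.
\]
Here $D_xu$ is injective on $\Omega_2$ by strict convexity. Comparing the two bounds yields $(M+1)^n\ge \lambda r^2/C\ge \lambda r/C$, using $r\ge2$. Finally $(M+1)^n\le 2^{n-1}(M^n+1)$, so $M^n\ge \frac{r\lambda}{C}-1$ after enlarging $C=C(n)$.

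The step I expect to be most delicate is the lower bound. The hypotheses control $u$ from below only on the single slice $\{s=0\}$, so a naive oscillation bound is unavailable. The midpoint reflection across that slice, combined with the upper bound from the first step, is what supplies the needed lower bound. I would double-check that every point used in the convexity arguments stays inside the domain $(-3,3)^{n-1}\times(-r,r)$ and inside $(-2,2)^{n-1}$ on the slice.
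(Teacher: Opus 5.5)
Your proof is correct. The paper gives no argument of its own for this lemma: it quotes Lemma 2.9 of Caffarelli--Li \cite{cl03} and uses it as a black box in Proposition \ref{bdd}, so there is no in-paper proof to compare your route against.

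Your three steps are all valid.
\begin{enumerate}
\item The convex-combination bound $u\le\frac{M+1}{2}$ on $\Omega_1$ holds because every segment you use has both endpoints in the convex box $(-3,3)^{n-1}\times(-r,r)$.
\item The reflection across the slice $\{s=0\}$ gives $u\ge-\frac{M+1}{2}$ on $\Omega_1$. This correctly handles the fact that the lower bound is only assumed on that slice.
\item The counting step compares $|D_xu(\Omega_2)|\ge\lambda|\Omega_2|=\frac{\lambda r}{2}$ with the anisotropic box of volume $C(n)(M+1)^n/r$ that contains $D_xu(\Omega_2)$. This is legitimate because $D_xu$ is injective on $\Omega_2$, as $D_x^2u>0$ on a convex set.
\end{enumerate}
The final conversion $(M+1)^n\le 2^{n-1}(M^n+1)$ is justified since $M\ge u(0_{n-1},0)\ge 0$. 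For the same reason your bound on $(\max u)^n$ also controls $\max (u^n)$, whichever way the statement is read. Taking the supremum over $|s|<r$ rather than a maximum over the closed interval is harmless.

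It is worth noticing that your argument gives the sharper estimate $(M+1)^n\ge \lambda r^2/2^{2n+2}$; the stated inequality follows from it using $r\ge2$. It also uses the two-sided bound $0\le u\le1$ only on $(-2,2)^{n-1}\times\{0\}$. It therefore covers any reasonable reading of that hypothesis, including the situation in Proposition \ref{bdd}, where the bound is available on a full cube.
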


\begin{prop}\label{bdd}
There exists $C>0$, such that $C^{-1}I\le D_x^2u\le CI$ in $\mathbb{R}^{n+1}_-$.
\end{prop}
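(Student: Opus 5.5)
The plan follows Caffarelli--Li's elliptic argument (Lemma 2.9 in \cite{cl03}) and uses the bound $\gamma=\sup_{e\in E,X}\Delta_e^2u(X)<\infty$ from Lemma \ref{bdd2}. The upper bound comes first and the lower bound then follows from the equation. Since $u$ is only $C^{2,1}$, I will not differentiate the equation. Instead I will turn the bounded difference quotients into a bound on the pure second derivatives $u_{ee}$, via convexity and a rescaling argument in the spirit of Lemma \ref{cl03}.

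\textbf{Upper bound.} Fix $t\le 0$ and write $g(x):=u(x,t)$; this is a strictly convex $C^2$ function with $\lambda/m_2\le \det D^2_xg\le \Lambda/m_1$ by \eqref{u_t}. For every lattice vector $e\in E$ and every $x$,
\[
g(x+ke)+g(x-ke)-2g(x)\le \gamma k^2|e|^2 ,\qquad k\in\mathbb{N}.
\]
So for each fixed $x$, the function $g(x+\cdot)-g(x)-Dg(x)\cdot(\cdot)$ grows at most quadratically, with constant $\gamma$, along every lattice line through $x$.

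I then take $\ell(y):=g(x)+Dg(x)\cdot(y-x)$ and compare $g-\ell$ on a large box of lattice points around $x$. The quantity $g-\ell$ is convex, vanishes at $x$, and is bounded by $C\gamma$ on the lattice points of the unit-size cube containing $x$; this uses the symmetric second differences and convexity, which allows interpolation between lattice points. Hence $0\le g-\ell\le C$ on a unit cube around $x$. On that cube I apply Lemma \ref{cl03}, after a rotation placing any unit direction $\nu$ along the last axis and an affine rescaling so that $0\le g-\ell\le 1$ on $(-2,2)^{n-1}$. If $g_{\nu\nu}(x)$ were huge, then rescaling by $g_{\nu\nu}(x)^{1/2}$ in the $\nu$ direction would stretch the section, producing a long cylinder of length $r\sim g_{\nu\nu}^{1/2}$ on which $g-\ell$ is still bounded by $C$, using the lattice quadratic growth along $\nu$. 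Lemma \ref{cl03} would then force $\max u^n\ge r\lambda/C-1\to\infty$, a contradiction. This gives $D_x^2u\le CI$.

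\textbf{Main obstacle and fallback.} The delicate step is the contradiction argument in the previous paragraph: the correct bookkeeping of directions, namely which direction is long and which is bounded. As in \cite{cl03}, I first try the following form. If the largest eigenvalue of $D^2g(x)$ is huge, the section $\{g-\ell<1\}$ is thin in that direction, so by $\det D^2 g\le C$ it must be long, of length $r\to\infty$, in some other direction. Along that long direction the lattice bound gives $g-\ell\le C$ only on length $O(1)$, and Lemma \ref{cl03}, applied with the thin direction playing the role of the $(-2,2)^{n-1}$ part, gives the contradiction. I expect the main work to be arranging the normalization so that the hypotheses $0\le u\le 1$ on $(-2,2)^{n-1}$ and $\det\ge\lambda$ on the long cylinder both hold.

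\textbf{Lower bound.} Once $D_x^2u\le CI$ holds, the equation and \eqref{u_t} give $\det D_x^2u=f/(-u_t)\ge \lambda/m_2$. Each eigenvalue of $D_x^2u$ is therefore at least $(\lambda/m_2)\,C^{-(n-1)}$, so $D_x^2u\ge C^{-1}I$. For $n=1$ the lower bound is immediate, and the upper bound is just $u_{xx}=f/(-u_t)\le \Lambda/m_1$.
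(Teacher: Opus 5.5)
\textbf{There is a genuine gap in your upper-bound argument.} Your lower-bound step is fine. So are your opening observations: quadratic growth of $g-\ell$ along the translates $x\pm e$, $e\in E$, and hence $0\le g-\ell\le C$ on a cube of unit size around $x$. These match the start of the paper's argument.

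The problem is that you try to bound $g_{\nu\nu}(x)$ using only three things: convexity, $\gamma<\infty$, and $\lambda/m_2\le\det D_x^2g\le\Lambda/m_1$. This data cannot control a pointwise Hessian.

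\begin{itemize}
\item Your claim that a huge eigenvalue of $D^2g(x)$ makes the section $\{g-\ell<1\}$ thin is false. $D^2g(x)$ only governs the sections $\{g-\ell<h\}$ as $h\to0$. You have just shown that the height-one section contains a cube whose size is bounded below in terms of $n$ and $\gamma$, so it is thin in no direction.
\item More fundamentally, normalized sections plus two-sided determinant bounds give only $C^{1,\alpha}$ and $W^{2,p}$-type information for Monge--Amp\`ere. Uniform Hessian bounds are not available without a (Dini or H\"older) modulus of continuity of the right-hand side. So no argument built from Lemma \ref{cl03} and convexity alone can yield $D_x^2u\le CI$.
\item Freezing $t$ makes this unavoidable. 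The slice equation has right-hand side $f/(-u_t)$, and you have no quantitative modulus of continuity for $u_t(\cdot,t)$. So even the elliptic $C^{2,\alpha}$ theory is unusable on slices.
\end{itemize}

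Note also that your argument never uses the H\"older continuity of $f_1,f_2$. That alone signals it cannot be complete.

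\textbf{How the paper closes this step.} There, Lemma \ref{cl03} has a different job: it bounds the size of the section, not the Hessian.
\begin{itemize}
\item It is applied to $\tilde u(\bar y/2+\cdot,0)$, with the cube around $\bar y/2$ as cross-section and the segment from $0$ to $\bar y$ as the long direction. It also uses that $\tilde u(\cdot,0)$ is nondecreasing along rays from $0$.
\item This shows $\tilde u(\cdot,0)\ge\nu$ on $\partial B_r$ for one fixed large $r$, i.e.\ the section is not too long. With the quadratic upper bound, this gives $\nu\le\tilde u(\cdot,0)\le\nu^{-1}$ on $\partial B_r$.
\item \eqref{u_t} then transfers this to $\mu\le\tilde u\le\mu^{-1}$ on $\partial_pE_{2r}$, uniformly in the base point $(x,t)$.
\item The Hessian bound at $(0_n,0)$ comes from the interior $C^{2+\alpha,\frac{2+\alpha}{2}}$ estimate for the \emph{parabolic} equation satisfied by $\tilde u$ on this normalized parabolic section. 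This is exactly where the H\"older continuity of $f$ in $(x,t)$ enters.
\end{itemize}

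To repair your proof, keep the time variable. Replace your contradiction argument by this normalization-plus-regularity step; your lower-bound deduction from the equation then goes through unchanged.
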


\begin{proof}
For fixed $(x,t)\in \mathbb{R}^{n+1}_-$, let
\[
\tilde{u}(y,s):=u(x+y,t+s)-u(x,t)-D_xu(x,t)y.
\]
Then 
\[
\tilde{u}(0,0)=0,\quad \tilde{u}\ge0\quad\text{in}\quad\mathbb{R}^{n+1}_-.
\]

\textbf{Step 1: Estimate $\tilde{u}(\cdot,0)$ on $\partial B_r$ for certain $r>0$. }We will prove that there exist constants $\nu,r>0$, such that 
\[
\nu\le\tilde{u}(\cdot,0)\le \nu^{-1}\quad \text{on}\quad \partial B_r.
\]
From Lemma \ref{bdd1}, there is
\[
\sup_{e\in E}\Delta_e^2\tilde{u}(0,0)=\sup_{e\in E}\Delta_e^2u(x,t)\le\sup_{(\textbf{x},\textbf{t})\in\mathbb{R}^{n+1}_-,e\in E}\Delta_e^2u(\textbf{x},\textbf{t})\le\gamma.
\]
It follows that
\begin{equation}\label{upperbound}
\sup_{y\in \partial B_r}\tilde{u}(y,0)\le C\gamma r^2 \quad\text{for}\quad 1\le r<\infty.
\end{equation}
On the other hand for $\overline{y}\in\partial B_r$, we have from $\sup\limits_{e\in E}\Delta_e^2\tilde{u}\left(\frac{\overline{y}}{2},0\right)\le \gamma$ that
\begin{equation}\label{aninequality}
\tilde{u}\left(\frac{\overline{y}}{2}+e,0\right)+\tilde{u}\left(\frac{\overline{y}}{2}-e,0\right)\le 2\tilde{u}\left(\frac{\overline{y}}{2},0\right)+\gamma|e|^2.
\end{equation}
The convexity of $\tilde{u}$ gives that 
\[
2\tilde{u}\left(\frac{\overline{y}}{2},0\right)\le \tilde{u}\left(\overline{y},0\right)+\tilde{u}\left(0,0\right)=\tilde{u}\left(\overline{y},0\right).
\]
Combined with \eqref{aninequality}, we arrive at
\[
\tilde{u} \left( \frac{\overline{y}}{2}+y,0\right)\le\tilde{u}\left(\overline{y},0\right)+C_0\gamma\quad\text{for}\quad y\in (-2,2)^n.
\]
Taking $\frac{\overline{y}}{\left|\overline{y}\right|}$ as $e_n$ and applying Lemma \ref{cl03} to 
\[
\frac{\tilde{u} \left( \frac{\overline{y}}{2}+\cdot,0 \right)}{\tilde{u}\left(\overline{y},0\right)+C_0\gamma},
\] 
we have that
\[
\tilde{u}\left(\overline{y},0\right)^n
=
\sup_{|s|\le\frac{\left|\overline{y}\right|}{2}}\tilde{u}\left(\frac{\overline{y}}{2}+s\frac{\overline{y}}{\left|\overline{y}\right|},0 \right)^n
\ge
\frac{r\lambda}{C_nm_2}
-
\left(\tilde{u}\left(\overline{y},0\right)+C_0\gamma\right)^n,
\]
where the first equality comes from the convexity of 
Notice that the above inequality is obvious for $n=1$. If $\tilde{u}\left(\overline{y},0\right)\le\gamma$, then 
\[
\tilde{u}\left(\overline{y},0\right)^n\ge\frac{r\lambda}{C_nm_2}
-
\left(1+C_0\right)^n\gamma^n.
\]
We can choose $r>0$ large enough such that $\tilde{u}\left(\overline{y},0\right)\ge1$. Now fix $r$. Thus 
\begin{equation}\label{lowerbound}
\inf_{\overline{y}\in\partial B_r}\tilde{u}\left(\overline{y},0\right)\ge\min\{\gamma,1\}=:\gamma_1.
\end{equation}

\textbf{Step 2: Estimate $\tilde{u}$ on $\partial_pE_{2r}$. }For $(y,s)\in\partial_pE_{2r}$ with $|y|\ge r$, we have that $s\ge-3r^2$. By \eqref{u_t}, \eqref{upperbound} and\eqref{lowerbound}, there is
\[
\tilde{u}\left(y,s\right)\ge\tilde{u}\left(y,0\right)\ge\gamma_1
\]
and
\[
\tilde{u}\left(y,s\right)\le\tilde{u}\left(y,0\right)+3m_2r^2\le(C\gamma+3m_2)r^2.
\]
On the other hand for $(y,s)\in\partial_pE_{2r}$ with $|y|\le r$, we derive that
\[
\tilde{u}\left(y,s\right)\ge\tilde{u}\left(y,0\right)+3m_1r^2\ge3m_1r^2
\]
and
\[
\tilde{u}\left(y,s\right)\le\tilde{u}\left(y,0\right)+4m_2r^2\le(C\gamma+4m_2)r^2.
\]
Thus there exists $\mu>0$, independent of choice of $(x,t)\in\mathbb{R}^{n+1}_-$, such that 
\[
\mu\le \tilde{u}\le\mu^{-1},\quad\text{on}\quad\partial_pE_{2r},
\] 
where the constant $\mu$ is independent of the choice of $(x,t)\in\mathbb{R}_-^{n+1}$.

Applying the $C^{2+\alpha,\frac{2+\alpha}{2}}$ estimate of the parabolic Monge-Ampère equation, which can be derived from the proof of Theorem 4.1 in \cite{ww92}, we have that 
\[
\left|D^2_xu(x,t)\right|=\left|D^2_y\tilde{u}\left(0_n,0\right)\right|\le C,
\] 
where $C$ is independent of the choice of $(x,t)\in\mathbb{R}_-^{n+1}$. Combined with \eqref{pma1} and \eqref{u_t}, there is
\[
C^{-1}I\le D^2_xu(x,t)\le CI.
\]
From the arbitrariness of $(x,t)\in\mathbb{R}^{n+1}_-$, we have proved the proposition.
\end{proof}

\quad

\section{Capture of $\Delta_e^2u$ and $\Delta_k^tu$}\label{4}

Let $\lambda>0$. Define the parabolic scaling
\[
u^\lambda(x,t):=\lambda^{-2}u(\lambda x,\lambda^2 t),\quad Q(x,t):=-\tau t+\frac{1}{2}x'Ax,
\] 
where the matrix $A$ and constant $\tau$ are as specified in \eqref{asymptotic}. To obtain a refined description of $u$, we establish the following convergence properties of $u^\lambda$.

\begin{lemma}\label{gradient holder}
$u^\lambda\rightarrow Q$ in $C^0_{loc}\left(\mathbb{R}^{n+1}_-\right)$ as $\lambda\rightarrow\infty$. Moreover, for every $\alpha\in(0,1)$, $D_xu^\lambda\rightarrow D_xQ$ in $C^{\alpha,\frac{\alpha}{2}}_{loc}\left(\mathbb{R}^{n+1}_-\right)$ along a subsequence as $\lambda\rightarrow\infty$.
\end{lemma}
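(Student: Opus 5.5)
The plan is to derive the $C^0_{loc}$ convergence directly from the asymptotic expansion \eqref{asymptotic}, and then to upgrade it to convergence of $D_xu^\lambda$ using the uniform Hessian bounds of Proposition \ref{bdd} together with Arzelà–Ascoli.

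\textbf{Step 1: $C^0_{loc}$ convergence.} Fix a compact set $K\subset\mathbb{R}^{n+1}_-$, say $K\subset\{|x|^2-t\le M\}$. For $(x,t)\in K$ we have $u^\lambda(x,t)-Q(x,t)=\lambda^{-2}\bigl(u(\lambda x,\lambda^2t)-Q(\lambda x,\lambda^2t)\bigr)$, since $Q$ is parabolically homogeneous of degree two. At points where $|\lambda x|^2-\lambda^2t=\lambda^2(|x|^2-t)\ge C$, estimate \eqref{asymptotic} gives
\[
|u^\lambda-Q|\le C\lambda^{-2}\lambda^{\frac{4-\epsilon}{2}}(|x|^2-t)^{\frac{4-\epsilon}{4}}\le CM^{\frac{4-\epsilon}{4}}\lambda^{-\frac{\epsilon}{2}}.
\]
On the remaining small region $\lambda^2(|x|^2-t)<C$, both $u$ and $Q$ are bounded by a constant, because $u$ is continuous on the compact set $\{|x|^2-t\le C\}$. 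Hence $|u^\lambda-Q|\le C\lambda^{-2}$ there. Together these give $u^\lambda\to Q$ uniformly on $K$.

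\textbf{Step 2: uniform bounds for the gradient.} By scaling, $D_x^2u^\lambda(x,t)=D_x^2u(\lambda x,\lambda^2t)$ and $u^\lambda_t(x,t)=u_t(\lambda x,\lambda^2t)$. Proposition \ref{bdd} and \eqref{u_t} therefore give $C^{-1}I\le D_x^2u^\lambda\le CI$ and $m_1\le -u^\lambda_t\le m_2$, uniformly in $\lambda$. For $D_xu^\lambda$ itself, note that $D_xu^\lambda(x,t)=\lambda^{-1}D_xu(\lambda x,\lambda^2t)$. Using the normalization $D_xu(0_n,0)=0$ and the Hessian bound, we get $|D_xu^\lambda(x,0)|\le C|x|$. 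For the time direction we need a Hölder bound for $D_xu^\lambda$ in $t$.

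\textbf{Step 3: Hölder regularity of $D_xu^\lambda$ in time.} This is the step I expect to be the main obstacle, because we control $u_t$ but not $D_xu_t$. I would use the standard interpolation trick. For $|t-s|=\delta^2$, write $g:=u^\lambda(\cdot,t)-u^\lambda(\cdot,s)$. Then $|g|\le m_2\delta^2$, and $\|D_x^2g\|\le 2C$ by Step 2. Interpolating on a ball of radius $\delta$, i.e. taking a difference quotient in $x$ at scale $\delta$ and Taylor expanding, gives
\[
|D_xg(x)|\le C\bigl(\delta^{-1}\sup|g|+\delta\sup|D_x^2g|\bigr)\le C\delta=C|t-s|^{1/2}.
\]
Combined with the spatial Lipschitz bound, this shows $D_xu^\lambda$ is uniformly Lipschitz in the parabolic metric on compacta. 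In particular it is bounded in $C^{1,\frac12}_{loc}$, hence precompact in $C^{\alpha,\frac{\alpha}{2}}_{loc}$ for every $\alpha\in(0,1)$. If a second time derivative were needed, the local $C^{2+\alpha,\frac{2+\alpha}{2}}$ estimates from \cite{ww92} quoted in Proposition \ref{bdd} could replace this argument, but the interpolation suffices here.

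\textbf{Step 4: identifying the limit.} By Arzelà–Ascoli and a diagonal argument over an exhaustion of $\mathbb{R}^{n+1}_-$, we extract a subsequence $\lambda_j\to\infty$ along which $D_xu^{\lambda_j}\to G$ in $C^{\alpha,\frac{\alpha}{2}}_{loc}$. Since $u^{\lambda_j}\to Q$ locally uniformly by Step 1, and the gradients converge uniformly, passing to the limit in $u^{\lambda_j}(x+h,t)-u^{\lambda_j}(x,t)=\int_0^1 D_xu^{\lambda_j}(x+\theta h,t)\cdot h\,d\theta$ gives $G=D_xQ=Ax$. The limit is thus independent of the subsequence, which completes the argument.
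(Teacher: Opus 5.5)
Your proposal is correct and follows essentially the same route as the paper. The $C^0_{loc}$ convergence comes from \eqref{asymptotic} by splitting into the regions $\lambda^2(|x|^2-t)\ge C$ and $\lambda^2(|x|^2-t)<C$; the uniform Hessian bound of Proposition \ref{bdd}, together with $m_1\le -u^\lambda_t\le m_2$, then gives time-H\"older continuity of $D_xu^\lambda$ by interpolation; finally Arzel\`a--Ascoli applies, with the limit identified through the $C^0$ convergence. The differences are minor: you prove the interpolation step by hand, obtaining the endpoint exponent $\frac{1}{2}$, where the paper cites Lemma 3.1 of Chapter 2 in \cite{lsu68}, and you make the identification of the limit as $D_xQ$ explicit via the integral identity.
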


\begin{proof}
By Proposition \ref{prop0}, we have for $\lambda^2\left(|x|^2-t\right)\ge C$ that
\[
\begin{aligned}
\left|u^\lambda(x,t)-Q(x,t)\right|&=\lambda^{-2}\left|u\left(\lambda x,\lambda^2t\right)-Q\left(\lambda x,\lambda^2t\right)\right|\\&\le C\lambda^{-\frac{\epsilon}{2}}\left(|x|^2-t\right)^{\frac{4-\epsilon}{4}}.
\end{aligned}
\] 
On the other hand for $\lambda^2\left(|x|^2-t\right)\le C$, it is obvious that
\[
\left|u^\lambda(x,t)\right|\le \lambda^{-2}\sup\limits_{E_{C^{1/2}}}|u|,\quad\left|Q(x,t)\right|\le\lambda^{-2}\sup\limits_{E_{C^{1/2}}}|Q|.
\]
Thus we conclude that $u^\lambda\rightarrow Q$ in $C^0_{loc}\left(\mathbb{R}^{n+1}_-\right)$ as $\lambda\rightarrow\infty$. 

Now we prove the second statement. Proposition \ref{bdd} gives that $\left|D_x^2u^\lambda\right|\le C$, which means
\[
\left|D_xu^\lambda(x_1,t)-D_xu^\lambda(x_2,t)\right|\le C|x_1-x_2|
\] 
for any $x_1,x_2\in\mathbb{R}^n$ and $t\in\left(-\infty,0\right]$. It follows from \eqref{u_t} that for any $x\in \mathbb{R}^n$ and $t_1,t_2\in\left(-\infty,0\right]$
\[
\left|u^\lambda(x_1,t_1)-u^\lambda(x_1,t_2)\right|\le m_2|t_1-t_2|.
\]
Applying Lemma 3.1 of Chapter 2 in \cite{lsu68}, we have for every $\alpha'\in(0,1)$ that
\[
\left|D_xu^\lambda(x,t_1)-D_xu^\lambda(x,t_2)\right|\le C(K)|t_1-t_2|^{\frac{\alpha'}{2}}
\]
for every bounded set $K$ in $\mathbb{R}^{n+1}_-$ and $(x,t_1),(x,t_2)\in K$, where $C(K)$ is a positive constant depending on $K$. Thus for every $\alpha''\in\left(0,\alpha'\right)$, there is
\[
\left|D_xu^\lambda(x_1,t_1)-D_xu^\lambda(x_2,t_2)\right|\le C(K)\left(|x_1-x_2|^2+|t_1-t_2|\right)^{\frac{\alpha''}{2}}.
\]
By Arzela-Ascoli theorem and $u^\lambda\rightarrow Q$ in $C^0_{loc}\left(\mathbb{R}^{n+1}_-\right)$ as $\lambda\rightarrow\infty$,  we get that for every $\alpha\in(0,1)$ 
\[
D_xu^\lambda\rightarrow D_xQ \quad \text{in} \quad C^{\alpha,\frac{\alpha}{2}}_{loc}\left(\mathbb{R}^{n+1}_-\right)
\]
along a subsequence as $\lambda\rightarrow\infty$.
\end{proof}

We will need the following density lemma derived from Theorem 7.37 in \cite{l96}. Introduce the notations
\[
K_-:=B_1\times(-5,-4),\quad K_+:=B_1\times(-1,0).
\]

\begin{lemma}\label{density}
Let $\Omega:=B_{4}\times(-16,0)$, $0<\lambda\le \Lambda<\infty$,  
\[
\lambda I\le \left(a_{ij}(x,t)\right)\le \Lambda I \quad \text{in}\quad \Omega,
\]
$u\in C^{2,1}\left(\Omega\right)$ satisfy
\[
u_t-a_{ij}D_{x_ix_j}u\ge0,\quad u\ge0 \quad \text{in}\quad \Omega,
\]
and
\[
\frac{\left|\left\{u\ge1\right\}\cap K_-\right|}{\left|K_-\right|}\ge \mu>0.
\]
Then there exists some $C=C(n,\lambda,\Lambda,\mu)>0$, such that $u\ge C^{-1}$ in $K_+$.
\end{lemma}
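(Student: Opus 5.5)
The plan is to obtain Lemma \ref{density} from the weak Harnack inequality for nonnegative supersolutions of uniformly parabolic equations in nondivergence form (Krylov--Safonov theory, as in Theorem 7.37 of \cite{l96}), combined with a Chebyshev-type lower bound on the $L^p$ norm of $u$ over $K_-$. Throughout we regard $u_t-a_{ij}D_{x_ix_j}u\ge0$ as saying that $u$ is a nonnegative supersolution of $Lu:=u_t-a_{ij}D_{x_ij}u=0$ in $\Omega=B_4\times(-16,0)$. No earlier result of the paper is needed; the statement is purely about linear parabolic equations with measurable, uniformly elliptic coefficients.

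First I would recall the weak Harnack inequality in its standard form. There exist $p_0=p_0(n,\lambda,\Lambda)>0$ and $C_0=C_0(n,\lambda,\Lambda)$ such that, for a nonnegative supersolution on a parabolic cylinder, the $L^{p_0}$ average over an earlier subcylinder is bounded by $C_0$ times the infimum over a later subcylinder. Lieberman's Theorem 7.37 is formulated with cylinders $Q(R)$ of radius $R$ and top at the vertex. I would apply it, possibly after a translation in $t$ and a fixed dilation, with a cylinder of size comparable to $4$ whose top is at time $0$. For the configuration to fit, I need two things. The set $K_-=B_1\times(-5,-4)$ must lie in the "lower" region where the $L^{p_0}$ norm is taken. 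The set $K_+=B_1\times(-1,0)$ must lie in the "upper" region where the infimum is taken. Both regions must be separated by a positive time gap and lie at positive parabolic distance from the lateral boundary $\partial B_4\times(-16,0)$.

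If the cylinders in \cite{l96} do not match $K_\pm$ directly, I would cover $K_-$ and $K_+$ by finitely many small cylinders. I would then chain the inequality: the infimum over one small cylinder controls the $L^{p_0}$ norm over the next, since $\|u\|_{L^{p_0}(Q)}\ge |Q|^{1/p_0}\inf_Q u$. The number of links is a fixed count depending only on $n$, so the constants stay $C(n,\lambda,\Lambda)$. The geometric bookkeeping of this covering, namely that the time gap $(-4,-1)$ and the spatial room $B_4\supset B_1$ suffice, is the only real obstacle. I expect it to be routine because the domain was chosen with ample margins.

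Finally, from the density hypothesis,
\[
\Bigl(\frac{1}{|K_-|}\int_{K_-}u^{p_0}\Bigr)^{1/p_0}\ge \Bigl(\frac{|\{u\ge1\}\cap K_-|}{|K_-|}\Bigr)^{1/p_0}\ge \mu^{1/p_0}.
\]
The weak Harnack inequality then gives $\inf_{K_+}u\ge C_0^{-1}\mu^{1/p_0}$. This is the claimed bound $u\ge C^{-1}$ in $K_+$ with $C=C(n,\lambda,\Lambda,\mu)$. Regularity $u\in C^{2,1}$ ensures that $u$ is a strong supersolution, so the theorem applies.
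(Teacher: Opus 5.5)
Your proposal is correct and follows the paper's route: the paper gives no argument beyond citing the weak Harnack inequality (Theorem 7.37 in \cite{l96}), and your Chebyshev step $\frac{1}{|K_-|}\int_{K_-}u^{p_0}\ge\mu$ is exactly what turns it into the stated density form, with $C^{-1}=C_0^{-1}\mu^{1/p_0}$. The sets $\Omega=B_4\times(-16,0)$, $K_-$, $K_+$ are evidently chosen to match that theorem's cylinders with $R=1$, so your covering/chaining fallback should not be needed.
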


The following proposition establishes the precise supremum of $\Delta_e^2u$. Recall that $A$ is the symmetric positive definite $n\times n$ matrix in \eqref{asymptotic}.
\begin{prop}\label{deltax}
For every $e\in E$, we have the identity
\[
\sup\limits_{\mathbb{R}^{n+1}_-}\Delta_e^2u=\frac{e'Ae}{|e|^2}.
\]
\end{prop}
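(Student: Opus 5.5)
The plan is to prove two inequalities: $\sup\Delta_e^2u\ge e'Ae/|e|^2$ by blowing down, and the reverse inequality by a contradiction argument. The contradiction argument combines Lemma \ref{concave}, the density Lemma \ref{density}, and translation invariance of \eqref{pma1} under the period lattice.

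\textbf{Lower bound.} Fix $e\in E$ and a cylinder $K=B_1\times(-1,0)$. For $\lambda$ large, rescaling gives
\[
\frac{1}{|\lambda K|}\int_{\lambda K}\Delta_e^2u=\frac{1}{|K|}\int_K\frac{u^\lambda(x+e/\lambda,t)+u^\lambda(x-e/\lambda,t)-2u^\lambda(x,t)}{|e/\lambda|^2}\,dxdt .
\]
Write the second difference as $\int_0^1\big(D_xu^\lambda(x+se/\lambda,t)-D_xu^\lambda(x-se/\lambda,t)\big)\cdot e\,ds/(|e|^2/\lambda)$. Integrating first in $x$, it becomes a boundary-type expression involving $D_xu^\lambda$ on thin layers. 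By Lemma \ref{gradient holder} this converges to the same expression for $Q$, which equals $e'Ae/|e|^2$. A cleaner route is to test against a smooth cutoff $\varphi$: move the difference operator onto $\varphi$ and use $u^\lambda\to Q$ in $C^0_{loc}$, since $\Delta^2_{e/\lambda}\varphi\to D^2_{ee}\varphi/|e|^2$ uniformly. Either way the averages tend to $e'Ae/|e|^2$, so $\sup\Delta_e^2u\ge e'Ae/|e|^2$.

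\textbf{Upper bound.} Let $\gamma_e:=\sup\Delta_e^2u<\infty$ (finite by Lemma \ref{bdd2}), and suppose $\gamma_e>e'Ae/|e|^2+2\delta$. Choose $X_k=(x_k,t_k)$ with $\Delta_e^2u(X_k)\to\gamma_e$. Pick lattice points $z_k\in\mathbb{Z}^n$ and $l_k\in\mathbb{N}$ with $|x_k-z_k|\le\sqrt n$ and $t_k+l_k\in(-1,0]$. Then set
\[
u_k(x,t):=u(x+z_k,t-l_k)-u(z_k,-l_k)-D_xu(z_k,-l_k)\cdot x .
\]
By periodicity of $f$, $u_k$ solves the same equation with the same $f$. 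It satisfies \eqref{u_t}, and $\Delta_e^2u_k(\cdot)=\Delta_e^2u(\cdot+(z_k,-l_k))$. By Proposition \ref{bdd}, $\sup_{B_1}u_k(\cdot,0)$ is bounded uniformly in $k$, so Proposition \ref{prop0} and \eqref{asymptotic} apply to $u_k$ with constants independent of $k$. The quadratic part of $u_k$ must still be $Q$: indeed $u_k-Q$ differs from $u(\cdot+z_k,\cdot-l_k)-Q(\cdot+z_k,\cdot-l_k)$ by an affine function, and that difference is subquadratic at infinity for each fixed $k$. Uniqueness of the quadratic blow-down then forces matrix $A$ and constant $\tau$. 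Consequently the averaging step above holds for $u_k$ uniformly in $k$: there is $\lambda_0$ such that on the scaled lower cylinder $\lambda_0 K_-$ (with $K_-$ as in Lemma \ref{density}),
\[
\frac{1}{|\lambda_0K_-|}\int_{\lambda_0K_-}\Delta_e^2u_k\le \frac{e'Ae}{|e|^2}+\delta .
\]

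Now set $h_k:=\gamma_e-\Delta_e^2u_k\ge0$. Lemma \ref{concave} gives $\frac{1}{u_{k,t}}D_th_k+u_k^{ij}D_{ij}h_k\le0$. Multiplying by $-u_{k,t}>0$ yields $D_th_k-a_{ij}D_{ij}h_k\ge0$ with $a_{ij}=-u_{k,t}u_k^{ij}$. This operator is uniformly parabolic by Proposition \ref{bdd} and \eqref{u_t}. Since $h_k\ge 0$ and its average on $\lambda_0K_-$ is at least $2\delta-\delta=\delta$, while $h_k\le\gamma_e$, Chebyshev's inequality shows that $\{h_k\ge\delta/2\}$ occupies a fixed positive fraction $\mu$ of $\lambda_0K_-$. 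After parabolic rescaling by $\lambda_0$ (which preserves the ellipticity bounds), Lemma \ref{density} gives $h_k\ge c(\delta)>0$ on $\lambda_0K_+$. Taking $\lambda_0\ge 2\sqrt n+2$ ensures that $\lambda_0K_+$ contains the translated points $(x_k-z_k,t_k+l_k)$. Hence $\gamma_e-\Delta_e^2u(X_k)\ge c>0$ for all $k$, contradicting the choice of $X_k$.

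\textbf{Main obstacle.} The delicate step is the uniformity in $k$: showing that the translated solutions $u_k$ all blow down to the same $Q$ with an error rate independent of $k$, so that a single $\lambda_0$ works. I would handle this by checking that every constant in Proposition \ref{prop0} depends only on $n,\lambda,\Lambda,m_1,m_2,\epsilon$ and $\sup_{B_1}u_k(\cdot,0)$. The latter is controlled by Proposition \ref{bdd} together with \eqref{u_t}. Identifying the limiting $A$ and $\tau$ for $u_k$ then rests on uniqueness of the quadratic blow-down.
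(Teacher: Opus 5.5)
Your proof is correct. The lower bound, and the use of Lemma \ref{concave} with Lemma \ref{density} applied to $\sup\Delta_e^2u-\Delta_e^2u$, match the paper. The difference is in how the density estimate is turned into a contradiction.

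The paper never translates. It stays at the origin and shows, via Lemma \ref{gradient holder}, that the averages of $\Delta^2_{e/\lambda}u^\lambda$ over $K_-$ tend to $e'Ae/|e|^2$. It then applies Lemma \ref{density} for every large $\lambda$ and uses the identity $\sup_{K_+}\Delta^2_{e/\lambda}u^\lambda=\sup_{B_\lambda\times(-\lambda^2,0)}\Delta_e^2u$. The right-hand side increases to $\sup_{\mathbb{R}^{n+1}_-}\Delta_e^2u$ because these cylinders exhaust $\mathbb{R}^{n+1}_-$. So a uniform gap on $K_+$ for all large $\lambda$ already contradicts the definition of the supremum, and the step you flag as the main obstacle never arises.

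Your route instead fixes one scale $\lambda_0$ and moves near-maximizers into a bounded neighbourhood of the origin by lattice translations. This costs you the check that the translates $u_k$ satisfy \eqref{asymptotic} with the same $A,\tau$ and with $k$-independent constants. That check goes through as you outline:
\begin{itemize}
\item $\sup_{B_1}u_k(\cdot,0)\le C$ by Proposition \ref{bdd}.
\item The constants in Proposition \ref{prop0} depend only on this quantity and the structural data.
\item $Q_k=Q$, because $u_k-Q$ is $o(|x|^2-t)$ for each fixed $k$.
\end{itemize}
Your smooth-cutoff version of the averaging step fits this route well. Since $\Delta_e^2u_k>0$, you can take a cutoff $\varphi\ge\mathbf{1}_{K_-}$, and then $k$-uniform $C^0$ convergence suffices to bound the averages from above. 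That upper bound is all the contradiction needs, and it avoids the subsequential gradient convergence of Lemma \ref{gradient holder}.

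In short, yours is a valid fixed-scale, localized argument in the spirit of the elliptic Caffarelli--Li proof. The paper's exhaustion trick is shorter and needs only the blow-down of $u$ itself at a single base point.
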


\begin{proof}
Let $\beta:=\frac{e'Ae}{|e|^2}$ and $\hat{e}:=\lambda^{-1}e$. 
We claim that 
\begin{equation}\label{claim1}
\lim_{\lambda\rightarrow\infty}\int_{K_-}\Delta_{\hat{e}}^2u^{\lambda} dxdt=\int_{K_-}\beta dxdt=\beta\left|K_-\right|.
\end{equation}
To prove the claim, we first observe that
\[
\begin{aligned}
&\quad\int_{K_-}\Delta_{\hat{e}}^2u^{\lambda} dxdt\\
&=\frac{1}{\left|\hat{e}\right|^2}\int_{-5}^{-4}\int_{B_1}\int_0^1\frac{d}{ds}\left[u^{\lambda}\left(x+s\hat{e},t\right)+u^{\lambda}\left(x-s\hat{e},t\right)\right]dsdxdt\\
&=\frac{1}{\left|\hat{e}\right|^2}\int_{-5}^{-4}\int_0^1\left( \int_{B_1\left(s\hat{e}\right)\setminus B_1\left(-s\hat{e}\right)}D_xu^{\lambda}\cdot\hat{e}dx - \int_{B_1\left(-s\hat{e}\right)\setminus B_1\left(s\hat{e}\right)}D_xu^{\lambda}\cdot\hat{e}dx \right)dsdt.
\end{aligned}
\]
Similarly we can get that
\[
\begin{aligned}
&\quad\int_{K_-}\beta dxdt=\int_{K_-}\Delta^2_{\hat{e}}Qdxdt\\
&=\frac{1}{\left|\hat{e}\right|^2}\int_{-5}^{-4}\int_0^1\left( \int_{B_1\left(s\hat{e}\right)\setminus B_1\left(-s\hat{e}\right)}D_xQ\cdot\hat{e}dx - \int_{B_1\left(-s\hat{e}\right)\setminus B_1\left(s\hat{e}\right)}D_xQ\cdot\hat{e}dx \right)dsdt.
\end{aligned}
\]
Lemma \ref{gradient holder} yields the convergence
\[
\begin{aligned}
&\left|\int_{K_-}\Delta_{\hat{e}}^2u^{\lambda} dxdt - \int_{K_-}\beta dxdt \right|\\
&\le\frac{1}{|\hat{e}|}\int_{-5}^{-4}\int_0^1  \int_{B_1\left(s\hat{e}\right)\setminus B_1\left(-s\hat{e}\right)}\left|D_xu^{\lambda}-D_xQ\right|dxdsdt\\
&\quad + \frac{1}{|\hat{e}|}\int_{-5}^{-4}\int_0^1  \int_{B_1\left(-s\hat{e}\right)\setminus B_1\left(s\hat{e}\right)}\left|D_xu^{\lambda}-D_xQ\right|dxdsdt\rightarrow0
\end{aligned}
\]
as $\lambda\rightarrow\infty$. We have proved the claim. 

Now we prove the proposition. Denote 
\[
\alpha:=\sup\limits_{\mathbb{R}^{n+1}_-}\Delta_e^2u.
\]
We have that $\alpha<\infty$ by Lemma \ref{bdd2}. The strict convexity of $u$ implies the $\lambda$-uniform bound $
0<\Delta_{\hat{e}}^2u^\lambda\le \alpha$. Hence it follows from \eqref{claim1} that $\alpha\ge\beta$. For contradiction, assume $\alpha>\beta$. \eqref{claim1} implies the measure estimate
\[
\limsup_{\lambda\rightarrow\infty}\left( \frac{\alpha+\beta}{2}\left|\left\{ \Delta_{\hat{e}}^2u^{\lambda}\ge \frac{\alpha+\beta}{2} \right\}\cap K_- \right| \right)\le\lim_{\lambda\rightarrow\infty}\int_{K_-}\Delta_{\hat{e}}^2u^{\lambda} dxdt=\beta\left|K_-\right|.
\]
Therefore for $\lambda$ large
\[
\frac{\left|\left\{ \Delta_{\hat{e}}^2u^{\lambda}\ge \frac{\alpha+\beta}{2} \right\}\cap K_- \right|}{\left|K_-\right|}\le\frac{2\beta}{\alpha+\beta}.
\]
Equivalently, we obtain the complementary measure
\[
\frac{\left|\left\{\frac{2\left(\alpha-\Delta_{\hat{e}}^2u^{\lambda}\right)}{\alpha-\beta}\ge1\right\}\cap K_-\right|}{\left|K_-\right|}\ge \frac{\alpha-\beta}{\alpha+\beta}.
\] 
Applying Lemma \ref{concave} and \ref{density} yields the pointwise bound
\[
\frac{2\left(\alpha-\Delta_{\hat{e}}^2u^{\lambda}\right)}{\alpha-\beta}\ge C^{-1}\quad \text{in} \quad K_+, 
\]
where $C>0$ is a constant. Consequently
\[
\Delta_{\hat{e}}^2u^{\lambda}\le \alpha-\frac{\alpha-\beta}{2C} \quad \text{in}\quad K_+.
\]
This leads to the contradiction
\[
\alpha=\sup\limits_{\mathbb{R}^{n+1}_-}\Delta_e^2u=\lim_{\lambda\rightarrow\infty}\sup\limits_{K_+}\Delta_{\hat{e}}^2u^{\lambda}\le \alpha-\frac{\alpha-\beta}{2C}<\alpha.
\]
Thus $\alpha=\beta$. The proposition has been proved.
\end{proof}

Define 
\[
\Delta_k^tu(x,t):=\frac{u(x,t)-u(x,t-k)}{k}.
\]
The concavity of $F$ directly yields the following pair of equations for $\Delta_k^tu$.

\begin{lemma}\label{concavity'}
The difference quotient $\Delta_k^tu$ satisfies 
\begin{equation}\label{tsubsolution}
\frac{1}{u_t(x,t-k)}D_t\Delta_k^tu+u^{ij}(x,t-k)D_{x_ix_j}\Delta_k^tu\ge0
\end{equation}
and
\begin{equation}\label{tsupersolution}
\frac{1}{u_t(x,t)}D_t\Delta_k^tu+u^{ij}(x,t)D_{x_ix_j}\Delta_k^tu\le0
\end{equation}
for every $(x,t)\in\mathbb{R}^{n+1}_-$ and $k\in\mathbb{N}^+$.
\end{lemma}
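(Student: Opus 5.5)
The plan is to copy the concavity argument of Lemma \ref{concave}, but to compare $u$ with its time-translate $v(x,t):=u(x,t-k)$ instead of with an average of spatial translates. Two facts are all that is needed. First, $v$ solves the same equation as $u$. Second, $F(a,M)=\log a+\log\det M$ is concave on $(0,\infty)\times\{M>0\}$, so it lies below its tangent plane at every point.

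\textbf{Step 1: $v$ solves the same equation.} Since $u$ is parabolically convex with \eqref{u_t}, we have $-v_t>0$ and $D_x^2v>0$. We use the normalization $a_0=1$, the identity $f=f_1f_2$, and the periodicity $f_2(t-1)=f_2(t)$ from \eqref{f1f2}. Iterating this for $k\in\mathbb{N}^+$ gives $f(x,t-k)=f(x,t)$. Hence
\[
F\left(-v_t,D_x^2v\right)(x,t)=\log f(x,t-k)=\log f(x,t)=F\left(-u_t,D_x^2u\right)(x,t).
\]
This step is the only place where the hypothesis $k\in\mathbb{N}^+$ enters.

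\textbf{Step 2: compute the differential of $F$.} Write $\partial F$ for the differential of $F$. At a point $(a,M)$ it acts on an increment $(\delta a,\delta M)$ by
\[
\partial F(a,M)[\delta a,\delta M]=\frac{\delta a}{a}+\operatorname{tr}\left(M^{-1}\delta M\right).
\]
We also record that $u-v=k\Delta_k^tu$, so that $D_t(u-v)=kD_t\Delta_k^tu$ and $D_{x_ix_j}(u-v)=kD_{x_ix_j}\Delta_k^tu$.

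\textbf{Step 3: the subsolution inequality \eqref{tsubsolution}.} Apply concavity with base point $(-v_t,D_x^2v)$ and target $(-u_t,D_x^2u)$:
\[
F\left(-u_t,D_x^2u\right)\le F\left(-v_t,D_x^2v\right)+\partial F\left(-v_t,D_x^2v\right)\left[-u_t+v_t,\;D_x^2u-D_x^2v\right].
\]
By Step 1 the two values of $F$ coincide, so the linear term is nonnegative. The increment in $a$ is $-u_t+v_t$ and the base value of $a$ is $-v_t$, so the first term of $\partial F$ equals
\[
\frac{u_t-v_t}{v_t}=\frac{kD_t\Delta_k^tu}{u_t(x,t-k)}.
\]
The trace term equals $k\,u^{ij}(x,t-k)D_{x_ix_j}\Delta_k^tu$, because $v^{ij}(x,t)=u^{ij}(x,t-k)$. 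Dividing by $k>0$ gives \eqref{tsubsolution}.

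\textbf{Step 4: the supersolution inequality \eqref{tsupersolution}.} Reverse the roles and take the base point $(-u_t,D_x^2u)$ at $(x,t)$. Concavity together with Step 1 gives
\[
0\le \partial F\left(-u_t,D_x^2u\right)\left[u_t-v_t,\;D_x^2v-D_x^2u\right]=-k\left(\frac{1}{u_t(x,t)}D_t\Delta_k^tu+u^{ij}(x,t)D_{x_ix_j}\Delta_k^tu\right).
\]
This is exactly \eqref{tsupersolution}.

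There is no substantive obstacle in this argument. The only points needing care are the following.
\begin{itemize}
\item Track the sign convention: the linearization of $F(-u_t,\cdot)$ in the $t$-direction produces the coefficient $1/u_t$, not $-1/u_t$.
\item Check that $D_t\Delta_k^tu$ and $D_x^2\Delta_k^tu$ exist classically, which they do since $u\in C^{2,1}$.
\item Use the integrality of $k$ exactly once, in Step 1, through the temporal periodicity of $f_2$.
\end{itemize}
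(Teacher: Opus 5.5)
Your proof is correct and is exactly the argument the paper has in mind. The paper gives no written proof beyond the remark that the concavity of $F$ directly yields both inequalities, in the same way as Lemma \ref{concave}. Your tangent-plane inequalities for $F$ at the base points $(-u_t(x,t-k),D_x^2u(x,t-k))$ and $(-u_t(x,t),D_x^2u(x,t))$, combined with $f(x,t-k)=f(x,t)$, carry this out with the correct signs.
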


We give the exact value of $\Delta_k^tu$. Recall that $\tau$ is the constant in \eqref{asymptotic}.
\begin{prop}\label{deltat}
For every $(x,t)\in\mathbb{R}^{n+1}_-$ and $k\in\mathbb{N}^+$, we have the identity
\[
\Delta_k^tu=-\tau.
\]
\end{prop}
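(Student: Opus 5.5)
The plan is to imitate the proof of Proposition \ref{deltax}, with the spatial second difference replaced by the temporal difference quotient, and to run the argument twice: once for the supremum (using \eqref{tsubsolution}) and once for the infimum (using \eqref{tsupersolution}).

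\textbf{Step 1: bounds and parabolic equations for $\Delta_k^tu$.} By \eqref{u_t} and the mean value theorem, $-m_2\le\Delta_k^tu\le-m_1$ in $\mathbb{R}^{n+1}_-$. So $M:=\sup_{\mathbb{R}^{n+1}_-}\Delta_k^tu$ and $m:=\inf_{\mathbb{R}^{n+1}_-}\Delta_k^tu$ are finite.

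Multiplying \eqref{tsubsolution} by $-u_t(x,t-k)>0$ gives
\[
-D_t\Delta_k^tu+a^-_{ij}D_{x_ix_j}\Delta_k^tu\ge0,\qquad a^-_{ij}:=-u_t(x,t-k)\,u^{ij}(x,t-k).
\]
Proposition \ref{bdd} together with \eqref{u_t} shows that $(a^-_{ij})$ is uniformly elliptic, with constants independent of $(x,t)$. The same holds for $a^+_{ij}:=-u_t(x,t)\,u^{ij}(x,t)$ obtained from \eqref{tsupersolution}. Hence
\begin{itemize}
\item $W_1:=M-\Delta_k^tu\ge0$ satisfies $D_tW_1-a^-_{ij}D_{x_ix_j}W_1\ge0$;
\item $W_2:=\Delta_k^tu-m\ge0$ satisfies $D_tW_2-a^+_{ij}D_{x_ix_j}W_2\ge0$.
\end{itemize}
These are exactly the hypotheses of Lemma \ref{density}.

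\textbf{Step 2: scaling and the averaged identity.} Set $\hat k:=k\lambda^{-2}$. Then
\[
\Delta^t_{\hat k}u^\lambda(x,t)=\frac{u^\lambda(x,t)-u^\lambda(x,t-\hat k)}{\hat k}=\Delta_k^tu(\lambda x,\lambda^2t).
\]
The equations in Step 1 are invariant under this scaling, with the same ellipticity constants. I claim that
\[
\lim_{\lambda\to\infty}\int_{K_-}\Delta^t_{\hat k}u^\lambda\,dxdt=-\tau|K_-|.
\]
By Fubini,
\[
\int_{-5}^{-4}\frac{u^\lambda(x,t)-u^\lambda(x,t-\hat k)}{\hat k}\,dt=\frac1{\hat k}\int_{-4-\hat k}^{-4}u^\lambda(x,t)\,dt-\frac1{\hat k}\int_{-5-\hat k}^{-5}u^\lambda(x,t)\,dt.
\]
The same identity holds with $Q$ in place of $u^\lambda$, and for $Q$ the left-hand side equals $-\tau$. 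Each term on the right is an average of $u^\lambda$ over an interval of length $\hat k$, so replacing $u^\lambda$ by $Q$ changes it by at most $\sup_{B_1\times[-6,-3]}|u^\lambda-Q|$. This tends to $0$ by Lemma \ref{gradient holder}. Integrating over $x\in B_1$ proves the claim.

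Consequently $M\ge-\tau\ge m$.

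\textbf{Step 3: contradiction via the density lemma.} Suppose $M>-\tau$. Put $\theta:=\frac{M-\tau}{2}$, the midpoint of $-\tau$ and $M$.
\begin{itemize}
\item Since $\Delta^t_{\hat k}u^\lambda\le M$ and $M-\Delta^t_{\hat k}u^\lambda\ge0$, the claim of Step 2 and a Chebyshev-type estimate show that, for large $\lambda$, the set $\{\Delta^t_{\hat k}u^\lambda\le\theta\}$ occupies at least a fixed proportion $\mu>0$ of $K_-$.
\item On that set $\frac{2W_1}{M+\tau}\ge1$. Apply Lemma \ref{density} to $\frac{2W_1}{M+\tau}$ in $\Omega=B_4\times(-16,0)$; the equation holds there since it holds everywhere. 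This gives $\Delta^t_{\hat k}u^\lambda\le M-c$ in $K_+$ for some $c>0$ independent of $\lambda$.
\item Every point of $\mathbb{R}^{n+1}_-$ corresponds to a point of $K_+$ for $u^\lambda$ once $\lambda$ is large. Hence $M=\lim_{\lambda\to\infty}\sup_{K_+}\Delta^t_{\hat k}u^\lambda\le M-c$, a contradiction.
\end{itemize}
So $M=-\tau$. The symmetric argument applied to $W_2$, using \eqref{tsupersolution}, gives $m=-\tau$. Together, $\Delta_k^tu\equiv-\tau$.

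\textbf{Main obstacle.} The delicate point is the uniform ellipticity of the coefficients $a^\pm_{ij}$, which is needed so that the constant in Lemma \ref{density} does not depend on $\lambda$. This uniformity comes from Proposition \ref{bdd} and \eqref{u_t}, and it survives the scaling because $D_x^2u^\lambda(x,t)=D_x^2u(\lambda x,\lambda^2t)$ and $u^\lambda_t(x,t)=u_t(\lambda x,\lambda^2t)$.

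A secondary point is that the time shift by $k$ is compatible with the equation only because $f_2$ has period $a_0=1$. This is already built into Lemma \ref{concavity'}, so it needs no further work here.
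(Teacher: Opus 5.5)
Your argument follows the paper's: the averaged identity over $K_-$ comes from the $C^0_{loc}$ convergence $u^\lambda\to Q$, and then Lemma \ref{density} is applied to $\sup-\Delta$ via \eqref{tsubsolution} and to $\Delta-\inf$ via \eqref{tsupersolution}, with uniform parabolicity from Proposition \ref{bdd} and \eqref{u_t}. However, the first bullet of Step 3 is justified by the wrong bound. You derive the lower bound on $|\{\Delta^t_{\hat k}u^\lambda\le\theta\}\cap K_-|$ from $\Delta^t_{\hat k}u^\lambda\le M$, that is, from a Chebyshev estimate on $M-\Delta^t_{\hat k}u^\lambda\ge0$. That estimate only bounds from \emph{above} the measure of the set where $\Delta^t_{\hat k}u^\lambda$ is small. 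An upper bound together with the value of the average cannot stop $\Delta^t_{\hat k}u^\lambda$ from being very negative on a tiny set and exceeding $\theta$ everywhere else, so as written the step does not follow.

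What you need is a \emph{lower} bound, and Step 1 already gives it: $\Delta^t_{\hat k}u^\lambda\ge-m_2$. Let $p_\lambda$ be the proportion of $\{\Delta^t_{\hat k}u^\lambda\le\theta\}$ in $K_-$. Then $-\tau+o(1)\ge-m_2p_\lambda+\theta(1-p_\lambda)$, so $p_\lambda\ge\frac{\theta+\tau}{\theta+m_2}-o(1)$, which is positive because $\theta>-\tau\ge-m_2$. This is exactly why the paper passes to $v=u+m_2t$: then $\Delta^t_{k_\lambda}v^\lambda\ge0$, and Chebyshev bounds the set where it is \emph{large}, whose complement is the set you want. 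In the infimum case the same confusion recurs. The measure bound for $\{\Delta^t_{\hat k}u^\lambda\ge\frac{m-\tau}{2}\}$ must come from an upper bound such as $\Delta^t_{\hat k}u^\lambda\le-m_1$ (the paper uses $\Delta^t_{k_\lambda}(-u^\lambda)\ge0$), not from $\Delta^t_{\hat k}u^\lambda-m\ge0$. With these corrections your proof is complete and coincides with the paper's.
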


\begin{proof}
Denote $k_\lambda:=\frac{k}{\lambda^2}$. We claim that 
\begin{equation}\label{tclaim}
\lim_{\lambda\rightarrow\infty}\int_{K_-}\Delta_{k_\lambda}^tu^\lambda dxdt=\int_{K_-}(-\tau)dxdt=-\tau\left|K_-\right|.
\end{equation}
Actually
\[
\begin{aligned}
\int_{K_-}\Delta_{k_\lambda}^tu^\lambda dxdt
&=\int_{B_1}\int_{-5}^{-4}\frac{u^\lambda(x,t)-u^\lambda(x,t-k_\lambda)}{k_\lambda}dxdt\\
&=k_\lambda^{-1}\int_{B_1}\left(\int_{-5}^{-4}u^\lambda(x,t)dt-\int_{-5-k_\lambda}^{-4-k_\lambda}u^\lambda(x,t)dt\right)dx\\
&=k_\lambda^{-1}\int_{B_1}\left( \int^{-4}_{-4-k_\lambda}u^\lambda(x,t)dt-\int^{-5}_{-5-k_\lambda}u^\lambda(x,t)dt \right)dx.
\end{aligned}
\]	
We also have that
\[
\begin{aligned}
\int_{K_-}(-\tau)dxdt=k_\lambda^{-1}\int_{B_1}\left( \int^{-4}_{-4-k_\lambda}Q(x,t)dt-\int^{-5}_{-5-k_\lambda}Q(x,t)dt \right)dx.
\end{aligned}
\]
It follows from Lemma \ref{gradient holder} that
\[
\begin{aligned}
&\left|\int_{K_-}\Delta_{k_\lambda}^tu^\lambda dxdt-\int_{K_-}(-\tau)dxdt\right|\\
&\le k_\lambda^{-1}\int_{B_1}\left( \int^{-4}_{-4-k_\lambda}\left|u^\lambda-Q\right|(x,t)dt+\int^{-5}_{-5-k_\lambda}\left|u^\lambda-Q\right|(x,t)dt \right)dx\rightarrow0
\end{aligned}
\]
as $\lambda\rightarrow\infty$. The claim has been proved. 

To prove $\Delta_k^tu=-\tau$, we just need to get that
\begin{equation}\label{twoinequalities}
\sup_{\mathbb{R}^{n+1}_-}\Delta_k^tu=-\tau=\inf_{\mathbb{R}^{n+1}_-}\Delta_k^tu. 
\end{equation}

Now we give the proof of the first equality in \eqref{twoinequalities}. Denote 
\[
v:=u+m_2t,\quad v^\lambda(x,t):=\lambda^{-2}v\left(\lambda x,\lambda^2 t\right),\quad \gamma:=\sup\limits_{\mathbb{R}^{n+1}_-}\Delta_k^tv.
\]
From \eqref{u_t} we know that 
\[
-m_2\le\inf_{\mathbb{R}^{n+1}_-}\Delta_k^tu\le -\tau \le\sup_{\mathbb{R}^{n+1}_-}\Delta_k^tu\le -m_1\quad \text{in}\quad\mathbb{R}^{n+1}_-,
\]
where the last inequality gives that $\gamma<\infty$ and the first one implies that
\begin{equation}\label{ge01}
\Delta_{k_\lambda}^tv^\lambda=\Delta_{k_\lambda}^tu^\lambda+m_2\ge0.
\end{equation}
\eqref{tclaim} implies that
\[
\lim_{\lambda\rightarrow\infty}\int_{K_-}\Delta_{k_\lambda}^tv^\lambda dxdt=\left(m_2-\tau\right)\left|K_-\right|.
\]
Hence we know that $\gamma\ge m_2-\tau$. Suppose $\gamma>m_2-\tau$, then by \eqref{ge01}
\[
\begin{aligned}
&\limsup_{\lambda\rightarrow\infty}\left( \frac{\gamma+m_2-\tau}{2}\left|\left\{ \Delta_{k_\lambda}^tv^{\lambda}\ge \frac{\gamma+m_2-\tau}{2} \right\}\cap K_- \right| \right)\\
&\le \lim_{\lambda\rightarrow\infty}\int_{K_-}\Delta_{k_\lambda}^tv^{\lambda} dxdt=\left(m_2-\tau\right)\left|K_-\right|.
\end{aligned}
\]
Thus for $\lambda$ large
\[
\frac{\left|\left\{ \Delta_{k_\lambda}^tv^{\lambda}\le \frac{\gamma+m_2-\tau}{2} \right\}\cap K_- \right|}{\left|K_-\right|}\le\frac{2\left(m_2-\tau\right)}{\gamma+m_2-\tau},
\]
which means that
\[
\frac{\left|\left\{\frac{2\left(\gamma-\Delta_{k_\lambda}^tv^{\lambda}\right)}{\gamma-m_2+\tau}\ge1\right\}\cap K_-\right|}{\left|K_-\right|}\ge \frac{\gamma-m_2+\tau}{\gamma+m_2-\tau}.
\] 
By Lemma \ref{density} and \eqref{tsubsolution} we arrive at 
\[
\frac{2\left(\gamma-\Delta_{k_\lambda}^tv^{\lambda}\right)}{\gamma-m_2+\tau}\ge C^{-1}\quad \text{in} \quad K_+
\]
for a positive constant $C>0$. In other words
\[
\Delta_{k_\lambda}^tv^{\lambda}\le \gamma-\frac{\gamma-m_2+\tau}{2C} \quad \text{in}\quad K_+.
\]
It follows that
\[
\gamma=\sup\limits_{\mathbb{R}^{n+1}_-}\Delta_{k}^tv=\lim_{\lambda\rightarrow\infty}\sup\limits_{K_+}\Delta_{k_\lambda}^tv^{\lambda}\le \gamma-\frac{\gamma-m_2+\tau}{2C}<\gamma,
\]
which is a contradiction. Therefore $\gamma=m_2-\tau$.

For the second equality in \eqref{twoinequalities}, we present its proof's outline. We name
\[
\eta:=\sup\limits_{\mathbb{R}^{n+1}_-}\Delta_k^t(-u).
\] 
It is easy to see that 
\begin{equation}\label{ge02}
\Delta_{k_\lambda}^t\left(-u^\lambda\right)\ge0
\end{equation}
by \eqref{u_t}. It follows from \eqref{tclaim} that
\[
\lim_{\lambda\rightarrow\infty}\int_{K_-}\Delta_{k_\lambda}^t\left(-u^\lambda\right) dxdt=\tau\left|K_-\right|.
\]
Combined with \eqref{ge02}, there is
\[
\limsup_{\lambda\rightarrow\infty}\left( \frac{\eta+\tau}{2}\left|\left\{ \Delta_{k_\lambda}^t\left(-u^\lambda\right)\ge \frac{\eta+\tau}{2} \right\}\cap K_- \right| \right)\le \tau\left|K_-\right|.
\]
Thus for $\lambda$ large
\[
\frac{\left|\left\{\frac{2\left(\eta-\Delta_{k_\lambda}^t\left(-u^\lambda\right)\right)}{\eta-\tau}\ge1\right\}\cap K_-\right|}{\left|K_-\right|}\ge \frac{\eta-\tau}{\eta+\tau}.
\]
Lemma \ref{density} and \eqref{tsupersolution} imply that
\[
\Delta_{k_\lambda}^t\left(-u^\lambda\right)\le \eta-\frac{\eta-\tau}{2C} \quad \text{in}\quad K_+,
\]
which brings about a contradiction
\[
\eta=\sup\limits_{\mathbb{R}^{n+1}_-}\Delta_{k}^t(-u)=\lim_{\lambda\rightarrow\infty}\sup\limits_{K_+}\Delta_{k_\lambda}^t\left(-u^\lambda\right)\le \eta-\frac{\eta-\tau}{2C}<\eta.
\]
This completes the proof of the second equality in \eqref{twoinequalities}.
\end{proof}

\quad

\section{Proof of Theorem \ref{mainthm1}}\label{5}
In this section, we establish Theorem \ref{mainthm1}, demonstrating that $u-Q$ consists of an affine function and a periodic function.

Choose $b\in\mathbb{R}^n$ so that the function
\[
\tilde{w}(x,t):=u(x,t)-Q(x,t)-b\cdot x
\]
satisfies $\tilde{w}(e_k,0)=\tilde{w}(-e_k,0)$ for $1\le k\le n$. Clearly $\tilde{w}\left(0_n,0\right)=0$. Due to Proposition \ref{deltax}, there is
\[
\Delta_{e_k}^2\tilde{w}\le0
\] 
for $1\le k\le n$. This inequality, combined with Lemma A.3 in \cite{cl04}, yields the fundamental estimate
\begin{equation}\label{w<=0}
\tilde{w}(je_k,0)\le0\quad \text{for}\quad 1\le k\le n \quad \text{and}\quad j\in\mathbb{Z}.
\end{equation}
For reader's convenience, we state it as follows:
\begin{lemma}[Lemma A.3 in \cite{cl04}]
Let $g\in C^0\left(\mathbb{R}\right)$, $g(0)=0$, $g(1)=g(-1)$, and $\Delta_1^2g(x)\le0$ for all $x\in\mathbb{R}$. Then
\[
g(m+1)\le g(m),\quad g(-m-1)\le g(-m)
\]
for all non-negative integer $m$. Consequently, $g(m)\le 0$ for all integer $m$.
\end{lemma}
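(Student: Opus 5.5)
The plan is to pass to the first differences $h(m):=g(m+1)-g(m)$ and argue on the two half-lines separately. The hypothesis $\Delta_1^2 g(x)\le 0$ at integer points $x=m$ says exactly
\[
g(m+1)+g(m-1)-2g(m)\le 0,\qquad\text{i.e.}\qquad h(m)\le h(m-1)\quad\text{for all } m\in\mathbb{Z}.
\]
So $h$ is nonincreasing on $\mathbb{Z}$. Only integer points are needed, so continuity of $g$ plays no role beyond making the values defined.

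First I would handle the starting step at $m=0$. Evaluating the second difference at $x=0$ gives $g(1)+g(-1)\le 2g(0)=0$. Combined with $g(1)=g(-1)$, this yields $g(1)\le 0=g(0)$ and $g(-1)\le 0=g(0)$. Hence $h(0)=g(1)-g(0)\le 0$ and $h(-1)=g(0)-g(-1)\ge 0$.

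Next I would propagate these signs by monotonicity of $h$. For $m\ge 0$, $h(m)\le h(0)\le 0$, which is $g(m+1)\le g(m)$. For the negative side, write $g(-m-1)-g(-m)=-h(-m-1)$. Since $-m-1\le -1$, monotonicity gives $h(-m-1)\ge h(-1)\ge 0$, so $g(-m-1)\le g(-m)$ for all $m\ge0$. Finally I would induct from $g(0)=0$ in both directions: each step does not increase $g$, so $g(m)\le 0$ for every $m\in\mathbb{Z}$.

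There is no real obstacle here. The only point needing care is the base case, where the symmetry assumption $g(1)=g(-1)$ is exactly what splits the inequality $g(1)+g(-1)\le0$ into the two one-sided inequalities.

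In the application above, this lemma is applied to $g(j):=\tilde w(je_k,0)$. Its hypotheses hold by the choice of $b$, by $\tilde w(0_n,0)=0$, and by Proposition \ref{deltax}, which gives $\Delta_{e_k}^2\tilde w\le 0$ at every point. The conclusion is exactly \eqref{w<=0}.
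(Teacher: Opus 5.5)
Your proof is correct. The paper does not prove this lemma itself but quotes it from Caffarelli--Li, and your argument (first differences $g(m+1)-g(m)$ are nonincreasing in $m$, with the base case $g(1)=g(-1)\le 0$ coming from $g(1)+g(-1)\le 2g(0)=0$) is essentially the standard proof of that lemma.
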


By Proposition \ref{deltat}, we arrive at
\[
\Delta_1^t\tilde{w}=0 \quad \text{in} \quad \mathbb{R}^{n+1}_-,
\] 
which means that $\tilde{w}$ has 1-periodicity with respect to $t$ variable. There is
\[
-(\tilde{w}+Q)_t\det D_x^2(\tilde{w}+Q)=f\quad\text{in}\quad \mathbb{R}^{n+1}_-,
\]
together with uniform bounds
\begin{equation}\label{twouniformbounds}
m_1\le-(\tilde{w}+Q)_t\le m_2,\quad C^{-1}I\le D_x^2(\tilde{w}+Q)\le CI\quad\text{in}\quad \mathbb{R}^{n+1}_-.
\end{equation}

Now we construct a periodic function. Applying Theorem 0.2 in \cite{cl04}, there exists $\xi_1\in C^2\left(\mathbb{R}^n\right)$, which is the unique solution of 
\[
\det\left(A+D_x^2\xi_1\right)=\det A\cdot f_1\quad\text{in}\quad\mathbb{R}^n
\]
satisfying
\[
A+D_x^2\xi_1>0\quad \text{in}\quad\mathbb{R}^n,
\]
\[
\xi_1\left(x+e_i\right)=\xi_1(x)\quad \text{for} \quad x\in\mathbb{R}^n \quad \text{and}\quad 1\le i\le n,
\]
and $\xi_1\left(0_n\right)=0$. Then there exists $C>0$ such that 
\begin{equation}\label{xuniformbounds}
C^{-1}I\le D_x^2\left(\xi_1+Q\right)\le CI\quad\text{in}\quad\mathbb{R}^n.
\end{equation}
For the temporal component, define
\[
\xi_2(t):=-\tau\int_0^t\left(f_2(s)-1\right)ds,
\]
yielding the derivative bound
\begin{equation}\label{tuniformbounds}
-(\xi_2+Q)_t=\tau f_2\in\left[\tau\inf_{\left(-\infty,0\right]}f_2,\tau\sup_{\left(-\infty,0\right]}f_2\right].
\end{equation}
Thus $v:=\xi_1+\xi_2\in C^{2,1}\left(\mathbb{R}^{n+1}_-\right)$ and solves
\[
-\left(v+Q\right)_t\det D_x^2\left(v+Q\right)=f\quad\text{in}\quad\mathbb{R}^{n+1}_-.
\]
Additionally
\begin{equation}\label{v=0}
v\left(je_k,0\right)=\xi_1\left(je_k\right)=0\quad\text{for}\quad 1\le k\le n \quad \text{and}\quad j\in\mathbb{Z}.
\end{equation}

Define the difference $h:=\tilde{w}-v$. According to \eqref{w<=0} and \eqref{v=0}, there is
\begin{equation}\label{h<=0}
h\left(je_k,0\right)\le0\quad\text{for}\quad 1\le k\le n \quad \text{and}\quad j\in\mathbb{Z}.
\end{equation}
$h$ satisfies linear parabolic equation 
\[
h_t-a_{ij}D_{x_ix_j}h=0\quad \text{in}\quad \mathbb{R}^{n+1}_-, 
\]
where
\[
a_{ij}(x,t):=\frac{\int_0^1\left[sD_x^2(w+Q)+(1-s)D_x^2(v+Q)\right]^{ij}ds}{\int_0^1\left[-s(w+Q)_t+(s-1)(v+Q)_t\right]^{-1}ds}.
\]
Crucially by \eqref{twouniformbounds}, \eqref{xuniformbounds} and \eqref{tuniformbounds}, the equation is uniformly parabolic: 
\[
C^{-1}I\le \left(a_{ij}(x,t)\right)\le CI \quad \text{in}\quad \mathbb{R}^{n+1}_-.
\]
The temporal derivative is also bounded:
\begin{equation}\label{h_t}
-C_0\le h_t\le C_0,
\end{equation}
where $C_0>0$ is a constant. From the periodicity of $v$ and $w$, $h$ is periodic in $t$. 

First, we will prove that $h$ has a finite supremum.
\begin{prop}\label{h}
$\sup\limits_{\mathbb{R}^{n+1}_-}h<\infty$.
\end{prop}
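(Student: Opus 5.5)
We follow Caffarelli--Li's argument in \cite{cl04} (proof of Theorem 0.2) and adapt it to the parabolic setting. The ingredients are four facts established above. First, $h$ is $1$-periodic in $t$. Second, $h$ solves a uniformly parabolic linear equation $h_t-a_{ij}D_{x_ix_j}h=0$. Third, $h_t$ is bounded by \eqref{h_t}. Fourth, $h(je_k,0)\le0$ for all $j\in\mathbb{Z}$ and $1\le k\le n$ by \eqref{h<=0}.

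\textbf{Step 1 (one-sided second differences).} We first record second differences in space. Proposition \ref{deltax} gives $\Delta_{e}^2\tilde w\le0$ for every lattice vector $e\in E$. The corrector $v$ is periodic in the lattice, so $\Delta_e^2 v\equiv 0$ for $e$ a lattice vector only in the sense that $v(x+e)=v(x)$. Consequently $\Delta_e^2h=\Delta_e^2\tilde w\le 0$ for each $e\in E$, at every time. In particular, for each fixed $(x,t)$ the function $j\mapsto h(x+je,t)$ is concave on $\mathbb{Z}$.

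\textbf{Step 2 (growth control).} We want an a priori sublinear bound on $h$. By \eqref{asymptotic}, $u-Q=O\big((|x|^2-t)^{1-\epsilon/4}\big)$. Since $v$ is bounded and $b\cdot x$ is linear, $h$ grows at most like $|x|^{2-\epsilon/2}$ plus a linear term. A concave sequence on $\mathbb{Z}$ that is $o(j^2)$ and bounded above at two points gives useful information; combining this with $h(je_k,0)\le0$ for all $j$, concavity along the $e_k$-direction yields $h(je_k,0)\le0$ in a strong sense. Concavity plus nonpositivity on a full line of lattice points forces each one-dimensional restriction $g(j)=h(x_0+je_k,0)$ to be bounded above by its values at neighbours. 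We then iterate direction by direction: first along $e_1$ from $0$, then along $e_2$ from points $je_1$, and so on. This gives $h(z,0)\le C$ for all lattice points $z\in\mathbb{Z}^n$. Here $C$ comes from a single application of concavity with the fact that the midpoint values are controlled by the bound at already-known points. More precisely, $h(z+e_k)+h(z-e_k)\le2h(z)$ is used as in Lemma A.3 of \cite{cl04}.

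\textbf{Step 3 (from lattice points to everywhere).} Next we extend the bound to all of $\mathbb{R}^{n+1}_-$. From Proposition \ref{bdd} and \eqref{xuniformbounds}, $|D_x^2h|\le C$. We also need a uniform gradient bound on unit cubes. Since $\Delta_e^2h\le 0$ on lattice directions and $h$ grows subquadratically, $D_xh$ is bounded up to a linear function. That linear part is already removed by the normalization $\tilde w(e_k,0)=\tilde w(-e_k,0)$. Alternatively, we avoid gradients altogether: for any $x$, write $x=z+\theta$ with $z\in\mathbb{Z}^n$ and $\theta\in[0,1)^n$, and use $D_x^2h\le C$ together with concavity along lattice lines to bound $h(x,0)\le \max$ of nearby lattice values $+C$. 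Time is handled by periodicity in $t$ together with \eqref{h_t}, which gives $h(x,t)\le h(x,s)+C_0$ for $s\in[t,t+1]$. Hence $\sup h(\cdot,t)\le\sup h(\cdot,0)+C_0$.

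\textbf{Main obstacle.} We expect Step 2 to be the hardest step: turning the one-dimensional information $h(je_k,0)\le0$, which holds only on coordinate axes, into a bound on the whole lattice. The plan is to exploit concavity of $h$ along each lattice direction together with the subquadratic growth from \eqref{asymptotic}. A concave sequence $g$ on $\mathbb{Z}$ with $g(j)=o(j^2)$ that is bounded above on one side cannot increase without bound, and we use this fact along lines $z+je_k$. If this direct iteration fails to close, we fall back on the Harnack/density approach of Lemma \ref{density} applied to $\sup h-h$, in the spirit of Proposition \ref{deltax}.
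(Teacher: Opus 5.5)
\textbf{The gap is in Step 2, and Step 2 is the whole content of the proposition.} The information you use there does not imply that $h(\cdot,0)$ is bounded above on $\mathbb{Z}^n$. That information is: $\Delta_e^2h(\cdot,0)\le0$ for all $e\in E$, $h(0_n,0)=0$, $h(e_k,0)=h(-e_k,0)$, $h(je_k,0)\le0$ on the coordinate axes, and subquadratic growth. Already for $n=3$, consider
\[
g(x)=\min\{-x_1+x_2+x_3,\ x_1-x_2+x_3,\ x_1+x_2-x_3\}.
\]
It is concave, satisfies $g(te_k)=-|t|$ for $k=1,2,3$, and grows linearly, yet $g(t,t,t)=t\to\infty$. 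A mollified version, corrected by a small linear function to restore $g(e_k)=g(-e_k)$, even has bounded Hessian. The one-dimensional facts you invoke are also false as stated. Concavity gives an upper bound at a point only from an upper bound at a midpoint together with a \emph{lower} bound at the reflected point. So on a line $z+je_k$ with $z$ off the $e_k$-axis, knowing the single value $h(z,0)$ gives no upper bound at all. Likewise, a concave sequence that is $o(j^2)$ and bounded above on one half-line can still increase without bound, e.g.\ $g(j)=j$. Your fallback, applying Harnack/density to $\sup h-h$, is circular: it presupposes $\sup h<\infty$.

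The missing ingredient is the equation $h_t-a_{ij}D_{x_ix_j}h=0$, used to exclude exactly this kind of piecewise-linear concave behaviour at infinity. The paper argues by contradiction. Suppose $M_i:=\sup_{[-i,i]^n\times[-i^2,0]}h\to\infty$. Concavity, combined with the bound $|h(\cdot,0)|\le C$ on $[-1,1]^n$ (the lower bound is what matters), gives $M_{2^i}\le4M_{2^{i-1}}+C$. The rescalings $H_i(x,t)=h(2^ix,2^{2i}t)/M_{2^i}$ then satisfy $H_i\le1$, $H_i(0_n,0)=0$, $H_i(\pm\frac12e_k,0)\le0$ and $\sup_{[-\frac12,\frac12]^n\times[-\frac14,0]}H_i\ge\frac18$. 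By Harnack and H\"older estimates they converge to a limit $H$ that is concave and independent of $t$, since the time period $2^{-2i}$ shrinks to zero. Let $l$ be a supporting linear function of $H$ at the origin. Applying the Harnack inequality to $l-H_i+\epsilon$ forces $H\equiv l$. The axis conditions then give $l\equiv0$, contradicting $\sup H\ge\frac18$. This Liouville-type step is what rules out examples like $g$ above: a nonnegative limit of solutions of uniformly parabolic equations, $l-H$, that vanishes at an interior point must vanish identically. No argument based only on lattice concavity and growth can supply it.
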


\begin{proof}
Let $M_i:=\sup\limits_{\left[-i,i\right]^n\times\left[-i^2,0\right]}h$. Suppose the contrary. Then $\lim\limits_{i\rightarrow\infty}M_i=\infty$. 

\textbf{Step 1: Obtain $M_i$'s iteration relation from $\Delta_e^2h\le0$.} We claim that
\begin{equation}\label{M_i}
\sup_{\left[-2^i,2^i\right]^n}h(x,0)\le 4\sup_{\left[-2^{i-1},2^{i-1}\right]^n}h(x,0)+C_1,\quad i\ge3
\end{equation}
for $C_1>0$ independent of $i$. Actually, there is a constant $C>0$ such that 
\[
\left|h(x,0)\right|\le C \quad \text{for} \quad x\in[-1,1]^n.
\]
For $m\in\mathbb{N}\setminus\left\{0\right\}$ and $x=\left(x_1,\cdots,x_n\right)\in [-m,m]^n$, we denote $[x_k]$ as the integer part of $x_k$ and define
\[
\epsilon_k:=
\left\{\begin{array}{ll}
1 & \text { if } x_k \text{ is odd}, \\
0 & \text { if } x_k \text{ is even}.
\end{array}\right.
\]
Then by Proposition \ref{deltax}, there is $\Delta_e^2h(x,0)=\Delta_e^2w(x,0)\le0$ for every $x\in\mathbb{R}^n$ and $e\in E$. It follows that for $x\in[-m,m]^n$
\[
h(x,0)+h\left( x-\sum_{k=1}^n\left([x_k]+\epsilon_k\right)e_k,0 \right)\le 2h\left( x-\sum_{k=1}^n\frac{[x_k]+\epsilon_k}{2}e_k,0 \right).
\]
Since 
\[
x-\sum_{k=1}^n\left([x_k]+\epsilon_k\right)e_k\in[-1,1]^n
\] 
and
\[
x-\sum_{k=1}^n\frac{[x_k]+\epsilon_k}{2}e_k\in\left[-\left[\frac{m+1}{2}\right]-1,\left[\frac{m+1}{2}\right]+1\right]^n,
\]
we get that
\[
h(x,0)\le 2\sup_{\left[-\left[\frac{m+1}{2}\right]-1,\left[\frac{m+1}{2}\right]+1\right]^n}h(x,0)+C_2.
\]
It follows that
\[
\begin{aligned}
\sup_{\left[-m,m\right]^n}h(x,0) &\le 2\sup_{\left[-\left[\frac{m+1}{2}\right]-1,\left[\frac{m+1}{2}\right]+1\right]^n}h(x,0)+C_2\\
&\le 4\sup_{\left[-\left[\frac{m+1}{4}\right]-2,\left[\frac{m+1}{4}\right]+2\right]^n}h(x,0)+3C_2.
\end{aligned}
\]
Take $m=2^i$ and $i\ge3$, we have proved the claim for $C_1:=3C_2$. 

By \eqref{h_t} and the periodicity of $h$ in $t$, we obtain the iteration relation
\begin{equation}\label{iterate}
M_{2^i}\le\sup_{\left[-2^i,2^i\right]^n}h(x,0)+C_0\le 4\sup_{\left[-2^{i-1},2^{i-1}\right]^n}h(x,0) +C_0+C_1\le 4M_{2^{i-1}}+C.
\end{equation}

\textbf{Step 2: Scare $h$ to get a limit funtion $H$. }Define the rescaled functions
\[
H_i(x,t):=\frac{h\left(2^ix,2^{2i}t\right)}{M_{2^i}}\quad\text{for}\quad(x,t)\in\left[-1,1\right]^n\times\left(-\infty,0\right].
\]
$H_i$ has a periodicity of $2^{-2i}$ in $t$ and satisfies a uniformly parabolic equation. We can easily derive that $H_i\left(0_n,0\right)=0$ and $H_i\le 1$ in $\left[-1,1\right]^n\times\left(-\infty,0\right]$. There is also
\[
H_{i}\left(\pm\frac{1}{2}e_k,0\right)=\frac{h\left(\pm2^{i-1}e_k,0\right)}{M_{2^i}}\le0,\quad 1\le k\le n
\]
from \eqref{h<=0}. From the iteration inequality \eqref{iterate}, we have that for large $i$
\[
\sup_{\left[-\frac{1}{2},\frac{1}{2}\right]^n\times\left[-\frac{1}{4},0\right]}H_i=\frac{M_{2^{i-1}}}{M_{2^i}}\ge\frac{M_{2^i}-6C}{4M_{2^i}}\ge\frac{1}{8}.
\]
Applying the Harnack inequality in \cite{l96} (Corollary 7.42) and periodicity of $H_i$
\[
\sup_{\left[-\frac{8}{9},\frac{8}{9}\right]^n\times\left[-1,0\right]}\left(1-H_i\right)\le C\left(1-H_i\left(0_n,0\right)\right)=C,
\]
which yields that $1-C\le H_i\le 1$ in $\left[-\frac{8}{9},\frac{8}{9}\right]^n\times\left[-1,0\right]$. By the Hölder estimate in \cite{l96} (Theorem 7.41), there exists $\beta\in(0,1)$, such that
\[
\left\|H_i\right\|_{C^{\beta,\frac{\beta}{2}}\left(\left[-\frac{3}{4},\frac{3}{4}\right]^n\times\left[-\frac{3}{4},0\right]\right)}\le C.
\]
Take $\alpha\in(0,\beta)$, and it follows from Arzela-Ascoli theorem that there exists $H\in C^{\alpha,\frac{\alpha}{2}}\left(\left[-\frac{3}{4},\frac{3}{4}\right]^n\times\left[-\frac{3}{4},0\right]\right)$, such that
\[
H_i\rightarrow H\quad\text{in}\quad C^{\alpha,\frac{\alpha}{2}}\left(\left[-\frac{3}{4},\frac{3}{4}\right]^n\times\left[-\frac{3}{4},0\right]\right)
\]
along a subsequence as $i\rightarrow\infty$. 

\textbf{Step 3: Claim that $H=H(x)$ and $H$ is concave. }To prove that $H$ depends only on $x$, we notice that 
\begin{equation}\label{almostconst}
\Delta_{2^{-2i}k}^tH_i=0
\end{equation}
inherited from $\Delta_k^th=0$. Fix $t_1,t_2\in\left[-\frac{3}{4},0\right]$. For any $i\ge3$ , there exists $k_i\in\mathbb{N}$, such that
\[
\left|t_2-\left(t_1-2^{-2i}k_i\right)\right|\le 2^{-2i}.
\]
It follows from \eqref{almostconst} that
\[
H_i\left(x,t_1\right)=H_i\left(x,t_1-2^{-2i}k_i\right) \quad\text{for}\quad x\in \left[-\frac{3}{4},\frac{3}{4}\right]^n.
\]
Let $i\rightarrow\infty$, we have $H\left(x,t_1\right)=H\left(x,t_2\right)$. The arbitrariness of $t_1,t_2$ gives the claim.

For the second claim, we have that 
\begin{equation}\label{almostconcavity}
\Delta_{2^{-i}e}^2H_i\le0
\end{equation}
induced from $\Delta_e^2h\le0$. Fix $x,y$ such that $x,x-y,x+y\in \left[-\frac{3}{4},\frac{3}{4}\right]^n$. For any $i\ge3$, there exists $e^i\in E$ with $|e^i|\le 2^i\sqrt{n}$, such that $\left|y-2^{-i}e^i\right|\le 2^{-i}\sqrt{n}$, $x+2^{-i}e^i,x-2^{-i}e^i\in \left[-\frac{3}{4},\frac{3}{4}\right]^n$. \eqref{almostconcavity} implies that
\[
H_i\left(x+2^{-i}e^i\right)+H_i\left(x-2^{-i}e^i\right)\le 2H_i\left(x\right).
\]
Let $i\rightarrow\infty$, we have that
\[
H(x+y)+H(x-y)\le 2H(x),
\]
which implies the concavity of $H$, which indicates the second claim.

\textbf{Step 4: Derive a contradiction by the Harnack inequality.} From the properties of $H_i$, the function $H$ satisfies
\[
H\left(0_n\right)=0,\quad \sup_{\left[-\frac{1}{2},\frac{1}{2}\right]^n}H\ge\frac{1}{8},\quad H\left(\pm\frac{1}{2}e_k\right)\le0\quad \text{for} \quad 1\le k\le n.
\]
Since $H$ is concave and $H\left(0_n\right)=0$, there exists a linear function $l$, such that $l\ge H$ in $\left[-\frac{3}{4},\frac{3}{4}\right]^n$ and $l\left(0_n\right)=0$. For any $\epsilon>0$, there exists $i'$, such that
\[
\left\| H-H_{i'} \right\|_{C^{\alpha,\frac{\alpha}{2}}\left(\left[-\frac{3}{4},\frac{3}{4}\right]^n\times\left[-\frac{3}{4},0\right]\right)}< \epsilon.
\]
Then by Harnack inequality
\[
\begin{aligned}
\sup_{\left[-\frac{3}{4},\frac{3}{4}\right]^n\times\left[-\frac{3}{4},0\right]}(l-H)&\le \sup_{\left[-\frac{3}{4},\frac{3}{4}\right]^n\times\left[-\frac{3}{4},0\right]}\left(l-H_{i'}+\epsilon\right)\\
&\le C\inf_{\left[-\frac{3}{4},\frac{3}{4}\right]^n\times\left[-\frac{3}{4},0\right]}\left(l-H_{i'}+\epsilon\right)\\
&\le C\left( \inf_{\left[-\frac{3}{4},\frac{3}{4}\right]^n\times\left[-\frac{3}{4},0\right]}\left(l-H\right) +2\epsilon \right)\\
&=2C\epsilon.
\end{aligned}
\]
Let $\epsilon\rightarrow0$, $\sup\limits_{\left[-\frac{3}{4},\frac{3}{4}\right]^n\times\left[-\frac{3}{4},0\right]}(l-H)\equiv0$, which means that $H\equiv l$. Because $H\left(0_n\right)=0$ and $H\left(\pm\frac{1}{2}e_k,0\right)\le0$ for $1\le k\le n$, we actually get that $H\equiv l\equiv0$ in $\left[-\frac{3}{4},\frac{3}{4}\right]^n$. This contradicts $\sup\limits_{\left[-\frac{1}{2},\frac{1}{2}\right]^n}H\ge\frac{1}{8}$! Thus $\sup\limits_{\mathbb{R}^{n+1}_-}h<\infty$.
\end{proof}

The combination of periodicity in the temporal variable t and uniform boundedness of $h$ implies that $h$ must be constant via the Harnack inequality.

\begin{proof}[proof of Theorem \ref{mainthm1}]
Denote $\gamma:=\sup\limits_{\mathbb{R}^{n+1}_-}h$. Then 
\[
\left(\gamma-h\right)_t-a_{ij}D_{x_ix_j}\left(\gamma-h\right)=0\quad \text{in}\quad \mathbb{R}^{n+1}_-.
\] 
For any $\epsilon>0$, there exists $\left(\overline{x},\overline{t}\right)\in \mathbb{R}^n\times[-1,0]$, such that $h\left(\overline{x},\overline{t}\right)>\gamma-\epsilon$. Denote $h_R(x,t):=h(Rx,R^2t)$, then
\[
\left(\gamma-h_R\right)_t-a_{ij}(Rx,R^2t)D_{x_ix_j}\left(\gamma-h_R\right)=0 \quad\text{in}\quad \mathbb{R}^{n+1}_-,
\]
which is still a uniformly parabolic equation with parabolic constants independent of $R$. For every $R$ such that $R\ge 2|\overline{x}|$. We have by the Harnack inequality that
\[
\begin{aligned}
\sup_{B_\frac{1}{2}\times[-3,-2]}\left(\gamma-h_R\right)\le C\left(\gamma-h_R\left(\frac{\overline{x}}{R},\frac{\overline{t}}{R^2}\right)\right)<C\epsilon.
\end{aligned}
\]
Let $R\rightarrow\infty$, then $h\ge\gamma-C\epsilon$ in $\mathbb{R}^{n+1}_-$. Let $\epsilon\rightarrow0$, there is $h\equiv\gamma$. Thus 
\[
\begin{aligned}
u(x,t)&=\tilde{w}(x,t)+Q(x,t)+b\cdot x=\gamma+v(x,t)-\tau t+\frac{1}{2}x'Ax+b\cdot x.
\end{aligned}
\]
\end{proof}

\noindent {\bf Funding:} J. Bao is supported by the National Key Research and Development Program of China (2020YFA0712904) and the National Natural Science Foundation of China (12371200).

\medskip

\bibliographystyle{plain}
\def\cprime{$'$}

\end{document}